\theoremstyle{definition}
\newtheorem{definition}{Definition}
\newtheorem{Remark}[definition]{Remark}
\newtheorem{Example}[definition]{Example}
\theoremstyle{plain}
\newtheorem{theorem}[definition]{Theorem} 
\newtheorem{lemma}[definition]{Lemma}
\newtheorem{Corollary}[definition]{Corollary}
\newtheorem*{Question*}{Question}
\newtheorem*{theorem*}{Theorem}
\newtheorem{theoremintro}{Theorem}
\DeclareMathOperator{\Aut}{Aut}
\DeclareMathOperator{\GL}{GL}
\DeclareMathOperator{\AGL}{AGL}
\DeclareMathOperator{\PSL}{PSL}
\DeclareMathOperator{\PSU}{PSU}
\DeclareMathOperator{\SL}{SL}
\DeclareMathOperator{\Out}{Out}
\DeclareMathOperator{\Syl}{Syl}
\DeclareMathOperator{\ord}{ord}
\newcommand{\N}{\mathbb{N}}
\newcommand{\Z}{\mathbb{Z}}
\newcommand{\F}{\mathbb{F}}
\newcommand{\BP}{\mathbb{P}}
\renewcommand{\Im}{\operatorname{Im}}
\newcommand{\Ker}{\operatorname{Ker}}
\newcommand{\ug}{\mathcal{U}(G)}
\newcommand{\op}{O_{p}(G)}
\newcommand{\fo}{F_0(G)}
\newcommand\extrafootertext[1]{%
	\bgroup
	\renewcommand\thefootnote{\fnsymbol{footnote}}%
	\renewcommand\thempfootnote{\fnsymbol{mpfootnote}}%
	\footnotetext[0]{#1}%
	\egroup
}
\numberwithin{definition}{section}
\numberwithin{equation}{section}
\title{Tuple regularity and $k$-ultrahomogeneity for finite groups}
\author{\textbf{Sofia Brenner} \\
	\normalsize\emph{TU Darmstadt, Germany}\\
	\normalsize{\texttt{sofia.brenner@tu-darmstadt.de}}}
\date{\vspace{-0.5cm}}
\begin{document}
\maketitle
\begin{abstract}
\noindent For $k, \ell \in \N$, we introduce the concepts of $k$-ultrahomogeneity and $\ell$-tuple regularity for finite groups. Inspired by analogous concepts in graph theory, these form a natural generalization of homogeneity, which was studied by Cherlin and Felgner~\cite{CHE91, CHE00} and Li~\cite{Li99} as well as automorphism transitivity, which was investigated by Zhang~\cite{ZHA92}. Additionally, these groups have an interesting algorithmic interpretation. We classify the $k$-ultrahomogeneous and $\ell$-tuple regular finite groups for $k, \ell \geq 2$. In particular, we show that every 2-tuple regular finite group is ultrahomogeneous.
\end{abstract}

\section{Introduction}	
Recently, there has been interest in studying finite groups from a combinatorial perspective, leading to the transfer of fundamental tools from graph theory to the realm of finite groups. For instance, versions of the Weisfeiler-Leman algorithm, a combinatorial algorithm to investigate whether two input graphs are isomorphic, have been proposed for finite groups \cite{BRA20}. This motivates the study of combinatorial symmetry and regularity measures for finite groups. 
\medskip

In this paper, we introduce the concepts of $k$-ultrahomogeneity and $\ell$-tuple regularity (for $k,\ell \in \N)$ for finite groups. Inspired by classical graph theoretic concepts, these notions capture different combinatorial aspects of symmetry while, at the same time, generalizing previously known group theoretic concepts. A finite group is called $k$-ultrahomogeneous if every isomorphism between two $k$-generated subgroups extends to an automorphism of the entire group, and ultrahomogeneous if it is $k$-ultrahomogeneous for all $k \in \N$ (note that some authors use the term ``homogeneity'' -- we adopt the terminology ``ultrahomogeneity'' to distinguish our definition of $k$-ultrahomogeneity from the unrelated notion of $k$-homogeneity defined for permutation groups). Complementary to this, we introduce the concept of $\ell$-tuple regularity. Intuitively speaking, a group is $\ell$-tuple regular if for all pairs of isomorphic $\ell$-generated subgroups, the multisets of subgroups obtained by adding an extra generator in all possible ways are the same. 
\medskip


The notion of $k$-ultrahomogeneity generalizes the well-known concepts of ultrahomogeneity and automorphism transitivity. Ultrahomogeneous groups are interesting from a model theoretic point of view. The finite ultrahomogeneous groups were classified by Cherlin and Felgner \cite{CHE91, CHE00} and, independently, by Li~\cite{Li99}. The work of Cherlin and Felgner additionally contains several results on infinite solvable ultrahomogeneous groups. 
Automorphism transitive groups are finite groups in which any two elements of the same order can be mapped to each other by a group automorphism. In our framework, these are precisely the 1-ultrahomogeneous groups.
Zhang~\cite{ZHA92} described possibilities for their structure. However, it has not been determined which of these groups are indeed automorphism transitive. Another related extensively studied concept that generalizes automorphism transitive groups are $m$-DCI-groups (see, for instance,~\cite{LI98}).
Studying $\ell$-tuple regularity is motivated by an algorithmic perspective as $\ell$-tuple regular groups are closely related to groups for which the Weisfeiler-Leman algorithm terminates after the first iteration (see Section~\ref{sec:preliminaries}).
\medskip

For every $k,\ell \geq 2$, we study the class of $k$-ultrahomogeneous finite groups as well as the class of $\ell$-tuple regular finite groups. By definition, each of these contains the class of ultrahomogeneous groups. Moreover, every $k$-ultrahomogeneous group is $k$-tuple regular, and every $\ell$-tuple regular group is $\ell'$-tuple regular for every $\ell' \leq \ell$. For this reason, we first study the class of 2-tuple regular finite groups, which is a priori the largest of the above-mentioned classes. There, we derive the following classification result: 

\begin{theoremintro}\label{theo:a}
Let $G$ be a finite group. 
\begin{enumerate}[(i)]
\item If $G$ is solvable, then $G$ is 2-tuple regular if and only if $G = A \times B$, where $A$ and $B$ have coprime orders, $A$ is an abelian group with homocyclic Sylow subgroups, and one of the following holds for $B$: 
\begin{enumerate}[(a)]
\item $B$ is isomorphic to one of the groups in $\{1, Q_8, G_{64}, A_4, C_3^2 \rtimes Q_8, \SL(2,3), G_{192}\}$, where $G_{64}$ denotes the Suzuki 2-group of order 64 and $G_{192}$ is a certain group of order 192. 
\item $B \cong M \rtimes C_{2^n}$, where $M$ is an abelian group of odd order with homocyclic Sylow subgroups and the cyclic group acts on $M$ by inversion. 
\end{enumerate} 
\item If $G$ is non-solvable, then $G$ is 2-tuple regular if and only if $G = H \times E$, where $H$ and $E$ have coprime orders, $H$ is an abelian group with homocyclic Sylow subgroups and $E$ is isomorphic to one of $\SL(2,5)$, $\PSL(2,5)$, and $\PSL(2,7).$
\end{enumerate}
\end{theoremintro}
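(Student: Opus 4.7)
The plan is to establish closure properties of the class of 2-tuple regular finite groups and then reduce to classifying the non-abelian ``essential'' part. First I would prove that if $G = X \times Y$ with $\gcd(|X|, |Y|) = 1$, then $G$ is 2-tuple regular if and only if both $X$ and $Y$ are; under coprimality, every 2-generated subgroup of $G$ decomposes as a direct product of 2-generated subgroups of $X$ and $Y$, so the multiset condition on 3-generated extensions factors accordingly. This closure lets me peel off a maximal direct factor consisting of homocyclic abelian Sylow subgroups at primes not dividing the order of the remainder, producing the $A \times B$ decomposition in (i) and the $H \times E$ decomposition in (ii).

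Next I would classify the abelian case: a finite abelian group is 2-tuple regular if and only if each of its Sylow subgroups is homocyclic. The forward direction is a direct verification using the subgroup lattice of $C_{p^a}^r$; for the reverse, one contrasts in a non-homocyclic abelian $p$-group the 3-generated extensions of two cyclic subgroups $\langle a\rangle, \langle a'\rangle$ of equal order whose generators have distinct depths with respect to the Frattini filtration. After this step, the problem reduces to classifying the essential factor: a 2-tuple regular finite group with no proper coprime abelian direct factor. A crucial input is that 2-tuple regularity implies 1-ultrahomogeneity, so Zhang's structure theorem for automorphism transitive groups~\cite{ZHA92} applies and sharply restricts the prime divisors, Sylow structure, and Fitting subgroup of the essential factor.

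In the non-solvable case (ii), Zhang's results combined with the classification of finite simple groups reduce the essential factor $E$ to finitely many candidates; a direct verification shows that $\SL(2,5)$, $\PSL(2,5)$, and $\PSL(2,7)$ are 2-tuple regular, while the remaining candidates are ruled out by the multiset condition on extensions of an involution pair. In the solvable case (i), I would first show that the Sylow 2-subgroup of the essential factor $B$ lies in $\{1, C_{2^n}, Q_8, G_{64}\}$ and that the odd-order radical is homocyclic abelian; then a case analysis of the induced action isolates the infinite family $M \rtimes C_{2^n}$ with inversion action and the sporadic examples $A_4$, $C_3^2 \rtimes Q_8$, $\SL(2,3)$, and $G_{192}$. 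The main obstacle is this last exhaustive solvable analysis: Zhang's classification still allows many automorphism transitive groups, and ruling out the unwanted ones requires a delicate exploitation of the full multiset condition from 2-tuple regularity, going strictly beyond automorphism transitivity. The forward verification that each listed group is 2-tuple regular reduces to finite (in principle computer-checkable) calculations of 3-generated subgroups, but showing that no other groups satisfy the condition is where I expect the bulk of the technical work to lie.
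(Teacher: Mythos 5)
There is a genuine gap at the pivot of your plan: the claim that ``2-tuple regularity implies 1-ultrahomogeneity, so Zhang's structure theorem applies.'' This implication is false in general. An extending bijection $\Psi$ in the definition of tuple regularity is only required to behave well on the subgroups generated by the fixed tuple together with one additional element; it need not be (or extend to) an automorphism of $G$. The paper only obtains 1-ultrahomogeneity from $\ell$-tuple regularity when $G$ itself is $\ell$-generated (Lemma~\ref{lemma:kgenerated}), and it exploits this solely for the quasisimple layer $E(G)$, which is 2-generated; there the relevant classification invoked is Li's result on such simple groups \cite{LI98}, not Zhang's. Indeed, the whole technical difficulty of the paper is that one must work with order-preserving bijections carrying only local (2- and 3-generated) information, so classifications phrased in terms of $\Aut(G)$-transitivity cannot be imported. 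Moreover, even granting the implication, Zhang's paper \cite{ZHA92} only describes possible structures of automorphism transitive groups and does not determine which of them actually occur, so it would not ``sharply restrict'' the essential factor in the way your reduction requires; your solvable case analysis and your non-solvable reduction to ``finitely many candidates'' therefore have no valid starting point.

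Concretely, the work your outline leaves unaccounted for is exactly what the paper does instead: a classification of 2-tuple regular $p$-groups via their power structure ($\Omega_i = \mho^{s-i}$, powerfulness, a Chevalley--Warning argument adapted from Cherlin--Felgner) yielding homocyclic, $Q_8$ or $G_{64}$; an analysis of coprime semidirect products $M \rtimes Q$ (using that centralizers and Fitting-type subgroups lie in $\ug$, plus Mih\u{a}ilescu's theorem and Burnside's normal $p$-complement theorem) to get the solvable list; a proof that $E(G)=1$ forces solvability, which goes through Hering's classification of transitive linear groups applied to the action on $\Omega(O_p(G))$; and, when $E(G)\neq 1$, proofs that $E(G)$ is quasisimple (Glauberman's $Z^\ast$-theorem, Griess) and lies in $\ug$, before Lemma~\ref{lemma:kgenerated} and \cite{LI98} can legitimately be used. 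Your peeling-off of the abelian coprime factor and the coprime direct product closure (Corollary~\ref{cor:directproduct}) are fine, and your abelian classification is correct, but the existence of the $A\times B$ (resp.\ $H\times E$) decomposition is itself a conclusion of the structural analysis, not something that can be assumed before classifying the ``essential'' part.
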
	

The proof of this theorem uses the classification of the finite simple groups. Now by comparing the classification given in Theorem~\ref{theo:a} to the classification of ultrahomogeneous finite groups in \cite{CHE00}, we obtain the following equalities between the group classes introduced above: 

\begin{theoremintro}\label{theo:b}
Let $G$ be a finite group. Then the following are equivalent: 
\begin{enumerate}[(i)]
\item $G$ is $k$-ultrahomogeneous for some $k \geq 2$. 
\item $G$ is $\ell$-tuple regular for some $\ell \geq 2$. 
\item $G$ is ultrahomogeneous. 
\end{enumerate}
\end{theoremintro}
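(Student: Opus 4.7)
The plan is to establish the cyclic chain (iii) $\Rightarrow$ (i) $\Rightarrow$ (ii) $\Rightarrow$ (iii). The first two implications are essentially immediate from the definitions: ultrahomogeneity means $k$-ultrahomogeneity for every $k$, so (iii) $\Rightarrow$ (i) is trivial, and for (i) $\Rightarrow$ (ii), given two isomorphic $k$-generated subgroups $H, K \leq G$ related by an isomorphism $\varphi$, an extension $\alpha \in \Aut(G)$ of $\varphi$ induces a bijection $g \mapsto \alpha(g)$ of $G$ that sends $\langle H, g\rangle$ to $\langle K, \alpha(g)\rangle$, matching the two multisets of extensions subgroup by subgroup. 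Hence $G$ is $k$-tuple regular, giving (ii) with $\ell = k \geq 2$.

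The substance lies in (ii) $\Rightarrow$ (iii). Since $\ell$-tuple regularity implies $\ell'$-tuple regularity for every $\ell' \leq \ell$, any $G$ satisfying (ii) is in particular 2-tuple regular, so Theorem~\ref{theo:a} pins $G$ down to one of the explicitly listed families. The approach is then to compare this list against the Cherlin--Felgner--Li classification of finite ultrahomogeneous groups~\cite{CHE00, Li99} and verify that the two lists coincide. A convenient reduction is the direct-product observation: if $G = A \times B$ with $\gcd(|A|,|B|) = 1$, then every subgroup of $G$ admits a canonical Hall decomposition that is preserved by all subgroup isomorphisms and all automorphisms, so $G$ is ultrahomogeneous if and only if $A$ and $B$ are. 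This cuts the comparison down to the abelian groups with homocyclic Sylow subgroups (easily seen to be ultrahomogeneous by a direct elementary divisor argument), the inversion semidirect products $M \rtimes C_{2^n}$, and the short exceptional list $Q_8$, $G_{64}$, $A_4$, $C_3^2 \rtimes Q_8$, $\SL(2,3)$, $G_{192}$, $\SL(2,5)$, $\PSL(2,5)$, $\PSL(2,7)$.

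The main obstacle is that the two classifications were obtained independently and use somewhat different parametrisations, so the matching is a careful tabular check: any discrepancy -- a group appearing on one list but not the other -- would invalidate the argument. I expect no such discrepancy to arise, because both classifications have the same shape (an abelian homocyclic factor in direct product with a coprime exceptional piece drawn from the same short menu), and so the verification is a finite enumeration rather than a conceptual argument. Once the two lists are seen to coincide, Theorem~\ref{theo:b} follows: the non-trivial content is fully absorbed into Theorem~\ref{theo:a}, whose main assertion is precisely that relaxing ultrahomogeneity to 2-tuple regularity yields no new finite groups.
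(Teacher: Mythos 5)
Your proposal is correct and follows essentially the same route as the paper: the implications (iii)~$\Rightarrow$~(i)~$\Rightarrow$~(ii) are definitional, and (ii)~$\Rightarrow$~(iii) is obtained by reducing to 2-tuple regularity, invoking the classification of Theorem~\ref{theo:a}, and checking against the Cherlin--Felgner--Li classification of ultrahomogeneous finite groups that every group on that list is ultrahomogeneous. The only cosmetic difference is your extra coprime-direct-product reduction, which the paper skips since only the one containment (Theorem~\ref{theo:a} list $\subseteq$ ultrahomogeneous) needs verifying.
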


From an algorithmic perspective, this result is interesting as it relates ultrahomogeneity, a symmetry condition, to 2-tuple regularity, a property only involving 2- and 3-generated subgroups that can be checked without knowledge of the entire automorphism group. It provides us with a simple algorithm to determine whether a given group is ultrahomogeneous: it suffices to check whether the Weisfeiler-Leman algorithm stabilizes after the first round. In fact, a similar phenomenon occurs for graphs: Cameron~\cite{CAM80} proved that every $5$-tuple regular graph is ultrahomogeneous, so the classes of $k$-ultrahomogeneous and $\ell$-tuple regular graphs coincide for every $k , \ell \geq 5$. As for groups, there is no direct argument for this equivalence known, but it is proven by comparing the respective graph classes. 
\medskip

Combined with Theorem~\ref{theo:b}, Theorem~\ref{theo:a} settles the classification of the $k$-ultrahomogeneous and $\ell$-tuple regular finite groups for every $k, \ell \geq 2$. In contrast to this, we remark that there exist 1-ultrahomogeneous finite groups that are not ultrahomogeneous (see \cite{ZHA92}). In other words, the class of 1-ultrahomogeneous finite groups is larger than the above-mentioned class. 
\medskip

This paper is organized in the following way: In Section~\ref{sec:preliminaries}, we introduce our notation and recall some preliminary results. In Section~\ref{sec:generalconcepts}, we prove several results of general flavor on $k$-ultrahomogeneity and $\ell$-tuple regularity. In Section~\ref{sec:pgroups}, we classify the $2$-tuple regular $p$-groups, thereby mainly using results on their power structure. In Section~\ref{sec:solvable}, we classify the solvable 2-tuple regular finite groups. Building on these results, we prove the classification of 2-tuple regular finite groups stated in Theorems~\ref{theo:a} as well as Theorem~\ref{theo:b} in Section~\ref{sec:general}. 

\section{Preliminaries}\label{sec:preliminaries}
Let $G$ be a finite group. We use the standard group-theoretic terminology. 
We write $[x]_G$ for the $G$-conjugacy class of $x \in G$ and omit the index if the ambient group is clear. 
For $a,b \in G$, we define $[a,b] \coloneqq aba^{-1}b^{-1}$, and for $A,B \subseteq G$, we set $[A,B] \coloneqq \langle [a,b] \colon a  \in A, b \in B \rangle$ as usual. Let $\gamma_1(G) \coloneqq G$ and $\gamma_{i+1}(G) \coloneqq [\gamma_i(G),G]$ for $i \in \N$. 
Set \[F_0(G) \coloneqq \prod_{p \in \mathbb{P} \setminus \{2\}} O_p(G).\] By $d(G)$, we denote the minimal number of generators of $G$. The quasisimple subnormal subgroups of $G$ are called the components of $G$. We write $E(G)$ for the layer of $G$, that is, the subgroup generated by the components of $G$.
\medskip

As usual, we denote by $Q_{2^k}$ the generalized quaternion group of order~$2^k$ for $k \geq 3$. For $d \in \N$ and $p \in \BP$, let $\AGL(1,p^d)$ and $\operatorname{\Gamma L}(1, p^d)$ denote the 1-dimensional affine linear group and semilinear group over the field with $p^d$ elements, respectively. Let $G_{64}$ be the Suzuki 2-group of order 64, which can be realized as a Sylow 2-subgroup of $\PSU(3,4)$. Moreover, let $G_{192} \cong G_{64} \rtimes C_3$ denote the group $\texttt{SmallGroup}(192,1025)$ in the \texttt{SmallGroupsLibrary} in GAP~\cite{GAP4}. Finally, $2.A_7$ and $6.A_7$ denote the double and the sextuple cover of the alternating group $A_7$. 
\medskip

Now suppose that $G$ is a finite $p$-group for some prime number $p \in \BP$. The group $G$ is called homocyclic if it is isomorphic to a direct product of $k$ copies of the cyclic group $C_{p^\ell}$ for some $k, \ell \in \N$. For $i \in \N_0$, let $\Omega_i \coloneqq \Omega_i(G)$ be the subgroup generated by all elements of order dividing $p^i$ in $G$ and write $\Omega \coloneqq \Omega_1$. Moreover, we set $\mho^i \coloneqq \mho^i(G)$ to be the subgroup generated by the $p^i$-th powers of the elements in $G$ and set $\mho \coloneqq \mho^1$. The group $G$ is powerful if $p$ is odd and $G' \subseteq \mho$ holds, or if $p = 2$ and $G' \subseteq \mho^2$ holds. 
\medskip

We now define the concepts of $k$-ultrahomogeneity and $\ell$-tuple regularity studied in this paper. 

\begin{definition}[$k$-ultrahomogeneity]
Let $G$ be a finite group, and let $k \in \N$. The group $G$ is called $k$-\emph{ultraho\-mo\-geneous} if the following holds: For all tuples $(g_1, \ldots, g_k), (h_1, \ldots, h_k) \in G^k$ for which the assignment $g_1 \mapsto h_1$, \ldots, $g_k \mapsto h_k$ defines an isomorphism $\varphi$ between $\langle g_1, \ldots, g_k \rangle$ and $\langle h_1, \ldots, h_k \rangle$, there exists an automorphism $\hat{\varphi} \in \Aut(G)$ with $\hat{\varphi}|_{\langle g_1, \ldots, g_k \rangle} = \varphi$. The group $G$ is called \emph{ultrahomogeneous} if it is $k$-ultrahomogeneous for every $k \in \N$. 
\end{definition}

We remark that the last property is sometimes called homogeneity instead of ultrahomogeneity (for instance, in \cite{CHE91, CHE00}). In this paper, we use the terminology ``$k$-ultrahomogeneity'' to distinguish the property from the notion of $k$-homogeneity defined in the setting of permutation groups. 
Note that a finite group $G$ is 1-ultrahomogeneous if and only if it is automorphism transitive, that is, every two elements of the same order are conjugate in the automorphism group of $G$. The structure of automorphism transitive groups was studied in \cite{ZHA92}.

\begin{definition}[$\ell$-tuple regularity]
	Let $G$ be a finite group, and let $\ell \in \N$. The group $G$ is called $\ell$-\emph{tuple regular} if the following holds: For all $\ell$-tuples $(g_1, \ldots, g_\ell), (h_1, \ldots, h_\ell) \in G^\ell$ for which the assignment $g_1 \mapsto h_1$, \ldots, $g_\ell \mapsto h_\ell$ defines an isomorphism $\varphi$ between $\langle g_1, \ldots, g_\ell \rangle$ and $\langle h_1, \ldots, h_\ell \rangle$, there exists a bijection $\Psi \colon G \to G$ such that for every $g \in G$, the extended assignment $g_1 \mapsto h_1$, \dots, $g_\ell \mapsto h_\ell$, $g \mapsto \Psi(g)$ defines an isomorphism between $\langle g_1, \ldots, g_\ell, g\rangle$ and $\langle h_1, \ldots, h_\ell, \Psi(g) \rangle$. 
\end{definition}

In this situation, we call $\Psi$ an \emph{extending bijection}. Note that $\Psi$ preserves the order of the elements. Observe that, as the tuples considered in the definitions may contain repeated entries, every $k$-ultrahomogeneous group is $k'$-ultrahomogeneous for all $k' \leq k$ (similarly for tuple regularity). Clearly, every $k$-ultrahomogeneous finite group is $k$-tuple regular. Concerning the converse implication, we observe the following: 

\begin{lemma}\label{lemma:kgenerated}
Let $\ell \in \N$, and let $G$ be a finite $\ell$-generated $\ell$-tuple regular group. Then $G$ is 1-ultra\-homo\-geneous. 
\end{lemma}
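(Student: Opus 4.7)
\emph{Plan.} My plan is to take an arbitrary pair $x, y \in G$ of equal order and iteratively extend the canonical isomorphism $x \mapsto y$ between $\langle x \rangle$ and $\langle y \rangle$ by adding generators of $G$ to the tuple one at a time, invoking $k$-tuple regularity at each step. Since $G$ is $\ell$-tuple regular, it is also $k$-tuple regular for every $k \leq \ell$ (as recorded in the excerpt), so all the instances of tuple regularity needed by the induction are available.

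Fix a generating tuple $(g_1, \ldots, g_\ell)$ of $G$ and let $x, y \in G$ satisfy $\ord(x) = \ord(y)$. I would construct inductively elements $h_1, \ldots, h_\ell \in G$ such that, for each $k \in \{0, 1, \ldots, \ell\}$, the assignment
\[
x \mapsto y, \quad g_1 \mapsto h_1, \quad \ldots, \quad g_k \mapsto h_k
\]
defines an isomorphism $\langle x, g_1, \ldots, g_k \rangle \to \langle y, h_1, \ldots, h_k \rangle$. The base case $k = 0$ is immediate from $\ord(x) = \ord(y)$. For the inductive step from $k$ to $k+1$ with $k < \ell$, apply $(k+1)$-tuple regularity to the two $(k+1)$-tuples $(x, g_1, \ldots, g_k)$ and $(y, h_1, \ldots, h_k)$. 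This yields a bijection $\Psi \colon G \to G$ such that for every $g \in G$, the extended assignment induces an isomorphism $\langle x, g_1, \ldots, g_k, g \rangle \to \langle y, h_1, \ldots, h_k, \Psi(g) \rangle$. Setting $h_{k+1} := \Psi(g_{k+1})$ completes the step.

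After $\ell$ iterations, the source subgroup $\langle x, g_1, \ldots, g_\ell \rangle$ contains every $g_i$ and therefore equals $G$, while the target subgroup $\langle y, h_1, \ldots, h_\ell \rangle$ is isomorphic to $G$ and so equals $G$ by finiteness. The resulting isomorphism is thus an automorphism of $G$ that sends $x$ to $y$, which is precisely the 1-ultrahomogeneity condition. I do not expect a real obstacle here: the proof is a short induction whose only delicate point is making sure the length of the tuple at each stage matches the index of the regularity hypothesis being invoked, and this is exactly what the condition $k+1 \leq \ell$ in the inductive step guarantees.
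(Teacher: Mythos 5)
Your proof is correct and follows essentially the same route as the paper: both fix a generating set, start from the order-preserving assignment $x \mapsto y$, and iteratively append the generators one at a time using $k$-tuple regularity for $k \leq \ell$ (the paper phrases the first step via 1-tuple regularity and then says ``continuing this way''), ending with an automorphism of $G$ sending $x$ to $y$. Your explicit bookkeeping that the tuple length at each step stays within $\ell$, and the finiteness argument identifying the image with $G$, are just the details the paper leaves implicit.
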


\begin{proof}
Fix a generating set $\{g_1, \ldots, g_\ell\}$ of $G$ and let $x,y \in G$ be elements of the same order. Since $G$ is in particular 1-tuple regular, we find $g_1' \in G$ such that the assignment $x \mapsto y$, $g_1 \mapsto g_1'$ defines an isomorphism between $\langle x, g_1 \rangle$ and $\langle y, g_1' \rangle$. Continuing this way, we iteratively construct elements $g_2', \ldots, g_\ell'$ such that the assignment $x \mapsto y$, $g_1 \mapsto g_1'$, \dots, $g_\ell \mapsto g_\ell'$ defines an isomorphism between $G = \langle x, g_1, \ldots, g_\ell \rangle$ and $\langle y, g_1', \ldots, g_\ell' \rangle$, that is, an automorphism of $G$.
\end{proof}

From an algorithmic perspective, the notion of $\ell$-tuple regularity is closely linked to the number of refinement steps needed to reach the stable coloring in the Weisfeiler-Leman algorithm (using the second variant of the algorithm described in \cite{BRA20}). If a group is $\ell$-tuple regular for some $\ell \geq 2$, then the $\ell$-dimensional Weisfeiler-Leman algorithm will reach the stable coloring in the initial step. Conversely, if $G$ is a finite group for which the coloring computed by the $\ell$-dimensional Weisfeiler-Leman algorithm stabilizes in the initial step, then $G$ is $(\ell-1)$-tuple regular. 
\medskip

We conclude this section with an example showing that the ordering of the tuples is relevant: 

\begin{Example}
Consider the group $G \coloneqq \langle x,y \colon x^9 = y^9 = 1,\ y x y^{-1} = x^{7}\rangle \cong C_9 \rtimes C_9$. For every pair of elements $g_1, g_2 \in G$ of the same order, we find a bijection $\Psi \colon G \to G$ such that $\langle g_1, g \rangle \cong \langle g_2, \Psi(g)\rangle$ holds. However, one can check that $G$ is not 1-tuple regular (for instance, this follows from Remark~\ref{rem:centralizer} below). 
\end{Example}

\section{\texorpdfstring{General results on $\ell$-tuple regularity}{General results on l-tuple regularity}}\label{sec:generalconcepts}
In this section, we derive general properties of $\ell$-tuple regular finite groups. Most of them hold for arbitrary $\ell \in \N$. At the end of this section, though, we present some statements that require at least 2-tuple regularity.

\begin{Remark}\label{rem:centralizer}
Let $G$ be a $1$-tuple regular finite group and let $a,b \in G$ be elements of the same order. Consider the assignment $a \mapsto b$ and let $\Psi \colon G \to G$ be an extending bijection. 
\begin{enumerate}[(i)]
\item The map $\Psi$ induces an order-preserving bijection between $C_G(a)$ and $C_G(b)$ as well as between $N_G(\langle a \rangle)$ and $N_G(\langle b \rangle)$. In particular, the conjugacy classes of $a$ and $b$ have the same size.
\item For $x \in G$ and $\ell \in \N$, we have $x^\ell = a$ if and only if $\Psi(x)^\ell = b$ holds. In particular, if $G$ contains a maximal cyclic subgroup of order $m \in \N$, then there is no maximal cyclic subgroup of $G$ whose order is a proper divisor of $m$. 
\end{enumerate}
\end{Remark}

%
%

We say that a subgroup $N$ of $G$ is \emph{a union of order classes} if, for every $n \in \N$, it contains all or none of the elements of order $n$ in~$G$. In this case, $N$ is a normal subgroup of $G$. The set of these subgroups is denoted by $\ug$. They will play a central role in our derivation as they inherit the $\ell$-tuple regularity of $G$: 

\begin{lemma}\label{lemma:orderinn}
Let $\ell \in \N$, let $G$ be an $\ell$-tuple regular finite group and consider $N \in \ug$. Then $N$ is $\ell$-tuple regular.
\end{lemma}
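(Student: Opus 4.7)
The plan is to leverage the $\ell$-tuple regularity of $G$ directly: given input data in $N$, apply $G$'s extending bijection and show that it restricts to $N$. Concretely, suppose $(g_1,\ldots,g_\ell), (h_1,\ldots,h_\ell) \in N^\ell$ are tuples for which $g_i \mapsto h_i$ defines an isomorphism between $\langle g_1,\ldots,g_\ell\rangle$ and $\langle h_1,\ldots,h_\ell\rangle$. Since these tuples lie in $G^\ell$ as well, the $\ell$-tuple regularity of $G$ yields an extending bijection $\Psi \colon G \to G$.

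The key observation (noted just after the definition of $\ell$-tuple regularity) is that $\Psi$ preserves the order of elements: for any $g \in G$, the isomorphism between $\langle g_1,\ldots,g_\ell,g\rangle$ and $\langle h_1,\ldots,h_\ell,\Psi(g)\rangle$ sends $g$ to $\Psi(g)$, so $\ord(g) = \ord(\Psi(g))$. Combined with the hypothesis that $N \in \ug$, this forces $\Psi(N) \subseteq N$: if $g \in N$ then every element of $G$ of order $\ord(g)$ lies in $N$, and $\Psi(g)$ is such an element. Since $G$ is finite and $\Psi$ is a bijection, $\Psi$ restricts to a bijection $\Psi_N \colon N \to N$.

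It remains to check that $\Psi_N$ is an extending bijection for $N$ with respect to the given tuples. For any $g \in N$, the subgroups $\langle g_1,\ldots,g_\ell,g\rangle$ and $\langle h_1,\ldots,h_\ell,\Psi(g)\rangle$ are generated by elements of $N$ and thus lie inside $N$; the isomorphism between them provided by $\Psi$ is therefore simultaneously an isomorphism of subgroups of $N$. Hence $\Psi_N$ witnesses the $\ell$-tuple regularity of $N$ for these tuples, completing the proof.

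There is no real obstacle here: the argument is a single application of the definition, with the only subtlety being the verification that $\Psi$ maps $N$ into itself, which is handled by the order-preservation of $\Psi$ together with the defining property of $\ug$.
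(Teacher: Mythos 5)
Your argument is correct and coincides with the paper's own proof: both take the extending bijection $\Psi$ for $G$, use its order-preservation together with $N \in \ug$ to see that $\Psi$ restricts to a bijection of $N$, and observe that this restriction is an extending bijection in $N$.
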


\begin{proof}
	Let $n_1, \ldots, n_\ell, n_1', \ldots, n_\ell' \in N$ such that the assignment $n_1 \mapsto n_1'$, \dots, $n_\ell \mapsto n_\ell'$ defines an isomorphism between $\langle n_1, \ldots, n_\ell\rangle$ and $\langle n_1', \ldots, n_\ell' \rangle$. Let $\Psi \colon G \to G$ be a corresponding extending bijection. Since $\Psi$ is order-preserving, we have $n \in N$ precisely if $\Psi(n) \in N$ holds. Therefore, the restriction of $\Psi$ to $N$ is an extending bijection in $N$.
\end{proof}

The following statement is an easy consequence of the preceding result:

\begin{Corollary}\label{cor:directproduct}
	Let $G = G_1 \times \dots \times G_n$ be a finite group with subgroups $G_1, \ldots, G_n$ of pairwise coprime order. For $\ell \in \N$, the group $G$ is $\ell$-tuple regular if and only if $G_1, \ldots, G_n$ are $\ell$-tuple regular. 
\end{Corollary}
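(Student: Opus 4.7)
The plan is to prove the two implications separately, leveraging Lemma~\ref{lemma:orderinn} for the forward direction and the product structure of subgroups for the backward direction.

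For the forward direction, I would first observe that since $|G_i|$ and $|G_j|$ are coprime for $i \neq j$, an element $g \in G$ lies in the factor $G_i$ precisely when its order divides $|G_i|$. Hence each $G_i$ is a union of order classes in $G$, i.e.\ $G_i \in \ug$. Lemma~\ref{lemma:orderinn} then immediately yields that $G_i$ is $\ell$-tuple regular.

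For the backward direction, suppose each $G_i$ is $\ell$-tuple regular and let $(g^{(1)}, \ldots, g^{(\ell)}), (h^{(1)}, \ldots, h^{(\ell)}) \in G^\ell$ be such that the assignment $g^{(k)} \mapsto h^{(k)}$ induces an isomorphism $\varphi \colon A \to B$ between the corresponding generated subgroups. Because the $G_i$ are normal of pairwise coprime order, every subgroup of $G$ decomposes as a direct product of its projections onto the $G_i$; writing $g^{(k)} = g^{(k)}_1 \cdots g^{(k)}_n$ and $h^{(k)} = h^{(k)}_1 \cdots h^{(k)}_n$ with $g^{(k)}_i, h^{(k)}_i \in G_i$, this gives $A = A_1 \times \cdots \times A_n$ and $B = B_1 \times \cdots \times B_n$, where $A_i = \langle g^{(1)}_i, \ldots, g^{(\ell)}_i \rangle$ and similarly for $B_i$. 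Since $\varphi$ is order preserving and $A_i$, respectively $B_i$, consists exactly of those elements whose order divides $|G_i|$, the map $\varphi$ restricts to an isomorphism $A_i \to B_i$. The fact that each component $g^{(k)}_i$ of $g^{(k)}$ is a (fixed, order-dependent) power of $g^{(k)}$ then forces $\varphi(g^{(k)}_i) = h^{(k)}_i$ for every $k$.

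Applying $\ell$-tuple regularity inside each $G_i$ to the tuples $(g^{(1)}_i, \ldots, g^{(\ell)}_i)$ and $(h^{(1)}_i, \ldots, h^{(\ell)}_i)$ produces extending bijections $\Psi_i \colon G_i \to G_i$. I would then define $\Psi \colon G \to G$ by $\Psi(x_1 \cdots x_n) \coloneqq \Psi_1(x_1) \cdots \Psi_n(x_n)$ and verify that it is an extending bijection in $G$: for any $g = g_1 \cdots g_n \in G$, the subgroup $\langle g^{(1)}, \ldots, g^{(\ell)}, g \rangle$ equals $\prod_i \langle g^{(1)}_i, \ldots, g^{(\ell)}_i, g_i \rangle$, and the coordinate-wise assignment is an isomorphism onto $\prod_i \langle h^{(1)}_i, \ldots, h^{(\ell)}_i, \Psi_i(g_i) \rangle$ by the defining property of each $\Psi_i$. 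The main (though still routine) obstacle is the bookkeeping in the backward direction, namely checking that the isomorphism $\varphi$ truly respects the direct product decomposition; once this is established, the construction of $\Psi$ is straightforward.
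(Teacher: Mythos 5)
Your proposal is correct and follows the route the paper intends: the paper gives no explicit proof, stating only that the corollary is an easy consequence of Lemma~\ref{lemma:orderinn}, and your forward direction (each $G_i \in \ug$ by the coprimality of orders) is exactly that. Your backward direction—decomposing the generated subgroups coordinatewise, noting the components are powers of the generators, and assembling the extending bijection $\Psi$ from the $\Psi_i$—is the routine direct-product argument the paper leaves implicit, and it is carried out correctly.
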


The following result will be needed frequently:

\begin{lemma}\label{lemma:centralizerunionorderclasses}
Let $G$ be a $1$-tuple regular finite group. For $N \in \ug$, we have $C_G(N) \in \ug$. 
\end{lemma}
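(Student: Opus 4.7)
The plan is to show that $C_G(N)$ contains either all or none of the elements of order $n$ in $G$, for every $n \in \N$. So fix $x \in C_G(N)$ and let $y \in G$ be an arbitrary element with $\ord(y) = \ord(x)$; the goal is to prove $y \in C_G(N)$.

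Since $G$ is $1$-tuple regular, the assignment $x \mapsto y$ admits an extending bijection $\Psi \colon G \to G$. First I would observe that $\Psi$ restricts to a bijection $N \to N$: by the order-preservation property of $\Psi$ noted after the definition of $\ell$-tuple regularity, $\Psi$ permutes each set of elements of a fixed order in $G$, and since $N \in \ug$ is precisely the union of some of these order classes, $\Psi(N) = N$.

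Next, I would use the commuting property. For an arbitrary $m \in N$, the subgroup $\langle x, m \rangle$ is abelian because $x$ centralizes $N$. The defining property of the extending bijection says that $x \mapsto y$, $m \mapsto \Psi(m)$ extends to an isomorphism $\langle x,m\rangle \to \langle y, \Psi(m)\rangle$, so $\langle y, \Psi(m)\rangle$ is abelian as well, i.e.\ $[y, \Psi(m)] = 1$. As $m$ ranges over $N$, the element $\Psi(m)$ ranges over all of $N$ by the first step, so $y$ centralizes $N$ and hence lies in $C_G(N)$.

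I expect no substantive obstacle here; the only subtle point is the surjectivity of $\Psi|_N$ onto $N$, which is where the hypothesis $N \in \ug$ is genuinely used. The rest is a direct application of the abelian-preservation built into the $1$-tuple regular extending bijection.
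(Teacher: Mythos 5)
Your proof is correct and is essentially the paper's argument: both take the extending bijection $\Psi$ for the assignment $x \mapsto y$ and combine its order-preservation with $N \in \ug$ to conclude $y \in C_G(N)$. The only cosmetic difference is that the paper routes through Remark~\ref{rem:centralizer} (the order-preserving bijection between $C_G(x)$ and $C_G(y)$, plus a count over order classes), whereas you restrict $\Psi$ to $N$ and transfer commutation via the isomorphisms $\langle x,m\rangle \cong \langle y,\Psi(m)\rangle$; both are valid one-step applications of $1$-tuple regularity.
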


\begin{proof}
Let $x,y \in G$ be of the same order, and assume $x \in C_G(N)$, so $N \subseteq C_G(x)$ holds. Due to $N \in \ug$ and the fact that there is a bijection between the multisets of orders of the elements in $C_G(x)$ and $C_G(y)$ (see Remark~\ref{rem:centralizer}), we obtain $y \in C_G(N)$.
\end{proof}

In general, quotient groups of $\ell$-tuple regular groups are not necessarily $\ell$-tuple regular. However, we obtain the following result:

\begin{lemma}\label{lemma:preimageorder}\label{lemma:centralizersq}
	Let $G$ be a 1-tuple regular finite group, and let $N \in \ug$. Set $\bar{G} \coloneqq G/N$, and let $a,b \in \bar{G}$ be elements of the same order. 
	\begin{enumerate}[(i)]
	\item There exist preimages $g_a, g_b \in G$ with $\ord(g_a) = \ord(g_b)$ of $a$ and $b$, respectively.
	\item The conjugacy classes of $a$ and $b$ in $\bar{G}$ have the same size.
	\end{enumerate} 
\end{lemma}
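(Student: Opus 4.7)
The plan is to establish (i) first and then derive (ii) from it by a direct application of 1-tuple regularity.

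For (i), the key observation is that when $u, v \in G$ have the same order and $\Psi \colon G \to G$ is an extending bijection for the assignment $u \mapsto v$, then $\Psi(uN) = vN$. Indeed, for any $y \in G$ the induced isomorphism $\langle u, y \rangle \to \langle v, \Psi(y) \rangle$ sends $u^{-1} y$ to $v^{-1} \Psi(y)$, so these two elements of $G$ share the same order; since $N \in \ug$ is a union of order classes, one lies in $N$ iff the other does, giving $y \in uN \Leftrightarrow \Psi(y) \in vN$. Consequently $\Psi$ restricts to an order-preserving bijection from the coset $uN$ to the coset $vN$, and in particular the multiset of element orders in a coset $uN$ depends only on its image $\bar u \in \bar G$ and agrees for any two cosets represented by elements of the same order in $G$. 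The remaining content of (i) is then: starting from any preimage $g_a$ of $a$ of order $m$, produce some $v \in G$ of the same order $m$ with $\bar v = b$; setting $g_b \coloneqq v$ finishes the argument. This is the main technical step, where one must combine several applications of 1-tuple regularity with the union-of-order-classes property of $N$ to rule out the pathological possibility that the set of ``reachable'' cosets forms a proper subset of the order class of $a$ in $\bar G$ missing $b$.

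For (ii), fix preimages $g_a, g_b \in G$ of the same order $m$ supplied by (i), and let $\Psi$ be an extending bijection for $g_a \mapsto g_b$. For every $y \in G$, the isomorphism $\langle g_a, y \rangle \to \langle g_b, \Psi(y) \rangle$ sends the commutator $[y, g_a]$ to $[\Psi(y), g_b]$, so these share the same order; since $N \in \ug$, this yields $[y, g_a] \in N \Leftrightarrow [\Psi(y), g_b] \in N$. As the preimage in $G$ of $C_{\bar G}(a)$ is exactly $\{y \in G : [y, g_a] \in N\}$ (and similarly for $b$), the map $\Psi$ restricts to a bijection between these two preimage sets, so $|C_{\bar G}(a)| = |C_{\bar G}(b)|$ after dividing by $|N|$, and hence $|[a]_{\bar G}| = [\bar G : C_{\bar G}(a)] = [\bar G : C_{\bar G}(b)] = |[b]_{\bar G}|$.

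The hard part is unquestionably (i): extending bijections control only one coset of $N$ at a time and do not a priori descend to a well-defined map on $\bar G$, so one cannot simply pull an ``automorphism of the quotient'' out of the hat, but must argue more carefully, exploiting the rigidity imposed by $N$ being a union of order classes together with the order-preserving matching between cosets that the key observation above provides.
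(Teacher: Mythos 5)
Your part (ii) is fine and coincides with the paper's argument: the isomorphism $\langle g_a, y\rangle \to \langle g_b, \Psi(y)\rangle$ carries $[y,g_a]$ to $[\Psi(y),g_b]$, so these have equal order, whence $N \in \ug$ gives $[y,g_a]\in N \Leftrightarrow [\Psi(y),g_b]\in N$ and the class sizes in $\bar G$ agree. The problem is part (i): you have not proved it. Your ``key observation'' (an extending bijection for $u \mapsto v$ with $\ord(u)=\ord(v)$ maps the coset $uN$ order-preservingly onto $vN$) is correct, but it only compares cosets of elements that \emph{already} have the same order in $G$, whereas (i) is about cosets whose images merely have the same order in $\bar G$. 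The step you describe as ``the main technical step'' --- producing a preimage of $b$ of the same order as a chosen preimage of $a$, and ruling out the ``pathological possibility'' --- is precisely the entire content of (i), and you give no argument for it. Since (ii) is deduced from (i), the proof as a whole is incomplete.

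The missing step also does not need the machinery you anticipate; in particular no tuple regularity is required, only $N \in \ug$. The paper argues as follows. Let $\pi$ be the set of primes dividing $\ord(a)=\ord(b)$ and choose preimages $g_a, g_b$ whose orders are $\pi$-numbers (split an arbitrary preimage into commuting factors of coprime orders; the $\pi'$-factor lies in $N$). The key elementary fact is: if $x,y \in G$ have equal order, then $x^m$ and $y^m$ have equal order for every $m$, so $N \in \ug$ forces $x^m \in N \Leftrightarrow y^m \in N$, i.e.\ $xN$ and $yN$ have the same order in $\bar G$. Now if $\pi=\{p\}$ and, say, $\ord(g_b) = p^t\,\ord(g_a)$ with $t \geq 1$, then $g_a$ and $g_b^{p^t}$ have equal order, so $a = g_aN$ and $b^{p^t} = g_b^{p^t}N$ have equal order in $\bar G$, contradicting $\ord(b^{p^t}) < \ord(b) = \ord(a)$; hence $\ord(g_a)=\ord(g_b)$. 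The general case follows by decomposing $g_a$ and $g_b$ into commuting elements of prime power order and applying the prime case to the corresponding factors, whose images are the $p$-parts of $a$ and $b$. You should supply such an argument to close the gap.
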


\begin{proof}
$\null$
\begin{enumerate}[(i)]
	\item Let $\pi$ denote the set of prime divisors of $\ord(a) = \ord(b)$. Let $g_a$ and $g_b$ be preimages of $a$ and $b$ whose orders are only divisible by the primes in $\pi$. We claim that $o_a \coloneqq \ord(g_a) = \ord(g_b) \eqqcolon o_b$ holds. To see this, first assume $\pi = \{p\}$ for some $p \in \BP$. Without loss of generality, we may assume $o_a^k = o_b$ for some $k \in \N$. Then $g_a^k$ and $g_b$ have the same order, and due to $N \in \ug$, also $g_a^k N$ and $g_b N$ have the same order in $\bar{G}$. Hence $\gcd(p,k) = 1$ follows, which implies $o_a = o_b$. For the general case, we factor $g_a$ and $g_b$ into products of commuting elements of distinct prime power orders and apply the first part of this proof to the individual factors. 

	\item 
	By (i), there exist preimages $g_a, g_b \in G$ of $a$ and $b$ such that $\ord(g_a) = \ord(g_b)$ holds. By Remark~\ref{rem:centralizer}, $[g_a]$ and $[g_b]$ have the same size. Let $\Psi \colon G \to G$ be a bijection extending the assignment $g_a \mapsto g_b$. For $g \in G$, we have $[g,g_a] \in N$ precisely if $[\Psi(g), g_b] \in N$ holds due to $N \in \ug$. This implies that $[a]_{\bar{G}}$ and $[b]_{\bar{G}}$ have the same size as these conjugacy classes are the images of $[g_a]$ and $[g_b]$ in $\bar{G}$, respectively.  \qedhere
\end{enumerate}
\end{proof}

\begin{lemma}\label{lemma:oqfo}	
Let $\ell \in \N$ and let $G$ be an $\ell$-tuple regular finite group.
	\begin{enumerate}[(i)]
		\item For every $p \in \BP$, we have $\op \in \ug$. Moreover, we have $\fo \in \ug$ and $F(G) \in \ug$.
		\item Suppose that $\ell \geq 2$ holds. For every $p \in \BP$ and $N \in \ug$, the preimage of $O_p(G/N)$ in $G$ is contained in~$\ug$. 
	\end{enumerate}
	In particular, the above groups are $\ell$-tuple regular.
\end{lemma}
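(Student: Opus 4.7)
The plan is to invoke Baer's theorem, which characterises the Fitting subgroup by a condition on $2$-generated subgroups: $x \in F(G)$ iff $\langle x, x^g\rangle$ is nilpotent for every $g \in G$. Combined with the elementary fact that a finite nilpotent group generated by $p$-elements is itself a $p$-group, this specialises to: $x \in \op$ iff $x$ is a $p$-element and $\langle x, x^g\rangle$ is a $p$-group for every $g \in G$. Both are conditions on isomorphism types of $2$-generated subgroups of $G$, which is precisely the data that an extending bijection controls.

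For (i), I fix $x \in F(G)$ and $y \in G$ with $\ord(x) = \ord(y)$ and produce an extending bijection $\Psi$ for $x \mapsto y$ (using the $1$-tuple regularity implied by $\ell$-tuple regularity). For each $g \in G$, the induced isomorphism $\varphi_g \colon \langle x, g \rangle \to \langle y, \Psi(g)\rangle$ sends $x^g$ to $y^{\Psi(g)}$, so it restricts to an isomorphism $\langle x, x^g \rangle \cong \langle y, y^{\Psi(g)}\rangle$. Since $\Psi$ is a bijection of $G$, the multisets of isomorphism types $\{[\langle x, x^g \rangle] : g \in G\}$ and $\{[\langle y, y^{g'} \rangle] : g' \in G\}$ coincide, and Baer's condition transfers from $x$ to $y$. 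This gives $F(G) \in \ug$; the subgroups $\op$ and $\fo$ are then the subsets of $F(G)$ defined by order conditions (being a $p$-power, respectively coprime to $2$), so both lie in $\ug$ as well.

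For (ii), set $\bar G := G/N$, $\pi \colon G \to \bar G$, and $M := \pi^{-1}(O_p(\bar G))$. Given $x, y \in G$ of equal order, the hypothesis $N \in \ug$ ensures $x^k \in N \Leftrightarrow y^k \in N$ for every $k$, so $\ord(\pi(x)) = \ord(\pi(y))$, and in particular $\pi(x)$ is a $p$-element iff $\pi(y)$ is. Applying Baer's characterisation inside $\bar G$ reduces $\pi(x) \in O_p(\bar G)$ to the condition that $\pi(\langle x, x^g\rangle)$ is a $p$-group for every $g \in G$. The isomorphism $\varphi_g$ preserves element orders and (by $N \in \ug$) thereby preserves $N$-membership, so it descends to an isomorphism $\pi(\langle x, x^g\rangle) \cong \pi(\langle y, y^{\Psi(g)}\rangle)$; the multiset argument then transfers the condition, giving $y \in M$.

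The final ``in particular'' clause is immediate from Lemma~\ref{lemma:orderinn}. The main obstacle is Baer's theorem itself, a classical but nontrivial input; once one has it, the remainder is careful bookkeeping of what the extending bijection preserves. Note that the hypothesis $\ell \geq 2$ in (ii) is not used explicitly in the argument above — it likely reflects an alternative, Baer-free proof the author has in mind, in which $2$-tuple regularity is needed to compare conjugates $x^g$ and $y^h$ directly via an extending bijection for the pair $(x, g)$.
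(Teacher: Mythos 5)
Your proposal is correct and follows essentially the same route as the paper: both rest on the Baer/Gorenstein characterization of $\op$ via the two-generated subgroups $\langle x, x^g\rangle$, transferred through an extending bijection for a single assignment plus one extra generator, and in (ii) both use that $N \in \ug$ makes $N$-membership order-determined, so the induced isomorphism passes to the quotient modulo $N$ (the paper phrases this as a contradiction on element orders rather than descending the isomorphism, which is the same argument). The only cosmetic difference is that you handle $F(G)$ first via the nilpotent form of Baer's theorem and read off $\op$ and $\fo$ as order-defined subsets, whereas the paper proves the statement for $\op$ first and then assembles $F(G)$ and $\fo$ by decomposing elements into commuting prime-power parts.
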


\begin{proof}
$\null$
\begin{enumerate}[(i)]
\item Fix $p \in \BP$. Let $x \in \op$ and let $\ell \in \N$ with $\ord(x) = p^\ell$. For every $x' \in [x]_G$, the subgroup $\langle x, x' \rangle$ is a $p$-group. Now consider an element $y \in G$ of order~$p^\ell$. Let $y' \in [y]_G$ and write $y' = g y g^{-1}$ for some $g \in G$. Consider the assignment $y \mapsto x$ and let $\Psi \colon G \to G$ be an extending bijection. Let $\varphi$ denote the isomorphism between $\langle y, g \rangle$ and $\langle x, \Psi(g) \rangle$ defined by $y \mapsto x, g \mapsto \Psi(g)$. Then $\varphi$ restricts to an isomorphism between $\langle y, y'\rangle$ and $\langle x, \Psi(g) x \Psi(g)^{-1} \rangle$. It follows that $\langle y, y' \rangle$ is a $p$-group. By \cite[Theorem~3.8.2]{GOR68}, this implies $y \in \op$. 
\medskip

Now let $g \in F(G) = \prod_{p \in \BP} \op$, and let $h \in G$ be an element of order $\ord(g)$. Write $g = \prod_{p \in \BP} g_p$ with $g_p \in \op$ for every $p \in \BP$. Moreover, we decompose $h$ in the form $\prod_{p \in \BP} h_p$ for $p$-elements $h_p$ with $[h_p, h_q] = 1$ for $p,q \in \BP$. 
For every $p \in \BP$, we then obtain $\ord(g_p) = \ord(h_p)$. By the first part of this proof, this implies $h_p \in F(G)$ for all $p \in \BP$, which yields $h \in F(G)$. Similarly, we argue for $F_0(G)$.
	
\item Set $\bar{G} \coloneqq G/N$ and let $H$ denote the preimage of $O_p(\bar{G})$ in $G$. Since we have \[O_p(\bar{G}) = \{z \in \bar{G} \colon \langle z, z' \rangle \text{ is a $p$-group for all }z' \in [z]_{\bar{G}}\}\] 
(see~\cite[Theorem 3.8.2]{GOR68}), we obtain 
	\[H = \{z \in G \colon \langle z, z' \rangle N/N \text{ is a $p$-group for all }z' \in [z]_{G}\}.\] 
	Let $x,y \in G$ be elements of the same order, and assume $x \in H$ and $y \notin H$. Let $y' \in [y]_G$ such that $\langle y, y' \rangle N/N$ is not a $p$-group, and write $y' = gyg^{-1}$ for some $g \in G$. Consider the assignment $y \mapsto x$ and let $\Psi \colon G \to G$ be an extending bijection. Let $\varphi \colon \langle y,g \rangle \to \langle x, \Psi(g)\rangle$ denote the isomorphism defined by $y \mapsto x, g \mapsto \Psi(g)$. For $x' \coloneqq \varphi(y')$, the restriction of $\varphi$ defines an isomorphism between $\langle y,y' \rangle$ and $\langle x,x' \rangle$. Note that we have $x' = \Psi(g) x \Psi(g)^{-1} \in [x]_G$.
	Let $a \in \langle y, y' \rangle$ such that $a N \in \bar{G}$ is not a $p$-element, and set $a' = \varphi(a) \in \langle x,x' \rangle$. By assumption, we have $c' \coloneqq a'^{p^s} \in N$ for some $s \in \N$, but $c \coloneqq a^{p^s} \notin N$. Due to $N \in \ug$, this implies $\ord(c) \neq \ord(c')$, which is a contradiction to $\varphi(c) = c'$.
\end{enumerate}	
By Lemma~\ref{lemma:orderinn}, the given subgroups of $G$ are $\ell$-tuple regular.
\end{proof}

Another central aspect distinguishing 2-tuple regularity from 1-tuple regularity is the fact that 2-tuple regularity captures commutator relations:

\begin{lemma}\label{lemma:commutators2tupleregular}
Let $G$ be a 2-tuple regular finite group and let $a,b,a',b' \in G$ such that the assignment $a \mapsto a', b \mapsto b'$ defines an isomorphism between $\langle a,b\rangle$ and $\langle a', b'\rangle$. If $b = gag^{-1}$ holds for some $g \in G$, then there exists $g' \in G$ of order $\ord(g)$ with $b' = g'a'g'^{-1}$. 
\end{lemma}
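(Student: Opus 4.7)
The plan is to apply the definition of 2-tuple regularity directly to the given pair, and then specialize the resulting extending bijection at the conjugating element $g$.

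More precisely, first I would apply 2-tuple regularity to the $2$-tuples $(a,b)$ and $(a',b')$, which by hypothesis satisfy the condition that $a \mapsto a'$, $b \mapsto b'$ extends to an isomorphism $\langle a,b\rangle \to \langle a',b'\rangle$. This yields an extending bijection $\Psi \colon G \to G$ such that, for every $x \in G$, the assignment $a \mapsto a'$, $b \mapsto b'$, $x \mapsto \Psi(x)$ defines an isomorphism between $\langle a,b,x\rangle$ and $\langle a',b', \Psi(x)\rangle$.

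Next, I would specialize to $x = g$. This gives an isomorphism $\varphi \colon \langle a,b,g\rangle \to \langle a',b',\Psi(g)\rangle$ with $\varphi(a)=a'$, $\varphi(b)=b'$, and $\varphi(g)=\Psi(g)$. Applying $\varphi$ to the relation $b = gag^{-1}$ then yields
\[
b' = \varphi(b) = \varphi(g)\,\varphi(a)\,\varphi(g)^{-1} = \Psi(g)\, a'\, \Psi(g)^{-1}.
\]
Setting $g' \coloneqq \Psi(g)$ and recalling that any extending bijection preserves element orders (noted just after the definition of $\ell$-tuple regularity), we get $\ord(g')=\ord(g)$, as required.

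There is no real obstacle here: the entire content is that $\Psi$ may be evaluated at \emph{any} element of $G$, in particular at the conjugator $g$, and that the resulting assignment automatically respects the commutator/conjugation relation $b = gag^{-1}$ because it is a genuine group isomorphism on $\langle a,b,g\rangle$. The order-preserving property of $\Psi$ then takes care of the requirement on $\ord(g')$.
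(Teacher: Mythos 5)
Your proposal is correct and coincides with the paper's own proof: both apply the definition of 2-tuple regularity to the pair $(a,b)\mapsto(a',b')$, evaluate the extending bijection $\Psi$ at the conjugator $g$, and read off $b' = \Psi(g)a'\Psi(g)^{-1}$ together with $\ord(\Psi(g)) = \ord(g)$. No gaps.
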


\begin{proof}
By 2-tuple regularity, there exists a bijection $\Psi \colon G \to G$ extending the assignment $a \mapsto a', b \mapsto b'$. In particular, we have $\langle a,b,g \rangle \cong \langle a', b' , \Psi(g)\rangle$, and hence $b = gag^{-1}$ implies $b' = \Psi(g) a' \Psi(g)^{-1}$. Moreover, the elements $g$ and $\Psi(g)$ have the same order.
\end{proof}

\section{Groups of prime power order}\label{sec:pgroups}
In this section, we study 2-tuple regular groups of prime power order. They form the building blocks of our derivation. In Section~\ref{sec:powerstructure}, we investigate their power structure. These results are then used in the classification given in Section~\ref{sec:classificationpgroups}. 

\subsection{Power structure}\label{sec:powerstructure}
Throughout, let $p \in \BP$ denote a prime number. In this section, we study the power structure of 2-tuple regular finite $p$-groups. However, many of our results even hold for 1-tuple regular finite $p$-groups. 

\begin{Remark}\label{rem:onetupleregular}	
Let $G$ be a 1-tuple regular finite $p$-group. By Remark~\ref{rem:centralizer}, we have $Z(G) = \Omega_j$ for some $j \in \N$. In particular, $Z(G)$ contains all elements of order $p$ in $G$. Moreover, all maximal cyclic subgroups of $G$ have the same order. 
\end{Remark}

\begin{Example}[Generalized quaternion groups]\label{ex:quaterniongroup}
	The group $Q_8$ is ultrahomogeneous by \cite[Proposition 8]{CHE91}, so $\ell$-tuple regular for every $\ell \in \N$. For $k \geq 4$, the group $Q_{2^k}$ contains maximal cyclic subgroups of different orders. In particular, it is not 1-tuple regular by Remark~\ref{rem:onetupleregular}.
\end{Example}


Remark~\ref{rem:onetupleregular} gives rise to the following fundamental result: 

\begin{lemma}\label{lemma:omegaagemo}
	Let $G$ be a $1$-tuple regular finite group of exponent $p^s$ for some $s \in \N$. For every $i \in \{0, \ldots, s\}$, we have $\Omega_{s-i} = \mho^i$. 
\end{lemma}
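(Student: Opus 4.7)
The plan is to prove both inclusions. The inclusion $\mho^i \subseteq \Omega_{s-i}$ is immediate from the exponent hypothesis: for any $g \in G$ one has $g^{p^s} = 1$, so $(g^{p^i})^{p^{s-i}} = 1$, meaning $g^{p^i} \in \Omega_{s-i}$. Since these $p^i$-th powers generate $\mho^i$, we obtain $\mho^i \subseteq \Omega_{s-i}$.

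For the reverse inclusion $\Omega_{s-i} \subseteq \mho^i$, the key input is Remark~\ref{rem:onetupleregular}, which says that all maximal cyclic subgroups of $G$ have a common order. Because $G$ has exponent $p^s$, that common order must equal $p^s$ (take any element realizing the exponent and embed it in a maximal cyclic subgroup). The plan is then to show that every generator of $\Omega_{s-i}$ is already a $p^i$-th power. Let $x \in G$ with $\ord(x) \mid p^{s-i}$, and embed $x$ in a maximal cyclic subgroup $\langle y \rangle$, which by the preceding observation satisfies $\ord(y) = p^s$. Writing $x = y^j$, the order of $x$ is $p^s/\gcd(j,p^s)$, and the condition $\ord(x) \mid p^{s-i}$ forces $p^i \mid j$. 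Writing $j = p^i m$, this yields $x = (y^m)^{p^i} \in \mho^i$. Since $\Omega_{s-i}$ is generated by elements $x$ with $\ord(x) \mid p^{s-i}$, this gives $\Omega_{s-i} \subseteq \mho^i$, completing the proof.

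The only non-routine step is the one that invokes $1$-tuple regularity, namely the uniformity of orders of maximal cyclic subgroups, but this is already packaged in Remark~\ref{rem:onetupleregular} (which in turn follows from Remark~\ref{rem:centralizer}(ii) applied to an element of order $p^s$). Once that is in hand, everything reduces to the elementary observation that inside a cyclic group of order $p^s$, the elements of order dividing $p^{s-i}$ are precisely the $p^i$-th powers.
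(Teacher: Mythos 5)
Your proposal is correct and follows essentially the same route as the paper: the inclusion $\mho^i \subseteq \Omega_{s-i}$ is immediate from the exponent, and the reverse inclusion uses Remark~\ref{rem:onetupleregular} to see that every element of order dividing $p^{s-i}$ sits inside a maximal cyclic subgroup of order $p^s$ and is therefore a $p^i$-th power. Your write-up merely makes explicit the elementary cyclic-group computation that the paper leaves implicit.
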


\begin{proof}
	Let $i \in \{0, \ldots, s\}$. The inclusion $\mho^i \subseteq \Omega_{s-i}$ holds in any finite $p$-group of exponent $p^s$. Conversely, let $a \in G$ be an element of order $p^{s-i}$. By Remark~\ref{rem:onetupleregular}, there exists an element $b \in G$ of order $p^s$ with $a = b^{p^i}$ and hence $a \in \mho^i$ follows.
\end{proof}

\begin{lemma}\label{lemma:1trpowerful}
Let $G$ be a 1-tuple regular finite group of exponent $p^s$ for some $s \in \N$. For all nontrivial elements $a,b \in G$, we have $\ord([a,b]) < \min\{\ord(a), \ord(b)\}$. In particular, if $p$ is odd, $G$ is a powerful $p$-group.
\end{lemma}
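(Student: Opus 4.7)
The plan is to argue by induction on $|G|$ via the central quotient $\bar{G} \coloneqq G/Z(G)$, which will turn out to again be a 1-tuple regular finite $p$-group of strictly smaller order than $G$.

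Two preparatory observations drive the induction. The first is a sharpening of Remark~\ref{rem:onetupleregular}: since $Z(G) = \Omega_j$ is generated by elements of order at most $p^j$, and these generators all lie in the abelian group $Z(G)$ and therefore commute pairwise, $Z(G)$ itself has exponent at most $p^j$. Consequently $Z(G)$ coincides with the set of elements of $G$ of order at most $p^j$, and for every $a \in G$ of order $p^k$ we obtain $\ord(\bar{a}) = p^{\max\{k-j,\,0\}}$. The second observation is that $\bar{G}$ is itself 1-tuple regular: since $Z(G) = C_G(G)$, Lemma~\ref{lemma:centralizerunionorderclasses} gives $Z(G) \in \ug$, so by Lemma~\ref{lemma:preimageorder} any two elements of $\bar{G}$ of the same order admit preimages of the same order in $G$; an automorphism of $G$ identifying them then descends to $\bar{G}$ because $Z(G)$ is characteristic.

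The inductive step is then straightforward. Let $a, b \in G$ be nontrivial with $\ord(a) = p^k \le \ord(b)$. If either $a$ or $b$ lies in $Z(G)$, the commutator is trivial. Otherwise $k > j$ and $\bar{a}, \bar{b}$ are nontrivial in $\bar{G}$; the inductive hypothesis applied in $\bar{G}$ yields $\ord([\bar{a}, \bar{b}]) < \ord(\bar{a}) = p^{k-j}$, so $[a,b]^{p^{k-j-1}} \in Z(G)$. Raising to the $p^j$-th power and using that $Z(G)$ has exponent at most $p^j$ gives $[a,b]^{p^{k-1}} = 1$, hence $\ord([a,b]) \le p^{k-1} < p^k$, as required. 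For the ``in particular'' clause, the bound just obtained shows that every commutator has order at most $p^{s-1}$, so $G' \subseteq \Omega_{s-1}$; by Lemma~\ref{lemma:omegaagemo}, $\Omega_{s-1} = \mho^1 = \mho$, giving $G' \subseteq \mho$ and hence powerfulness of $G$ whenever $p$ is odd.

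The main obstacle is the sharpening of Remark~\ref{rem:onetupleregular} from $Z(G) = \Omega_j$ to the statement that $Z(G)$ has exponent at most $p^j$. A priori $\Omega_j(G)$ could contain elements of order exceeding $p^j$, but here the abelianness of the centre — combined with the fact that all generators of $\Omega_j$ land in $Z(G)$ and therefore commute with one another — closes this gap. Without this refinement one cannot read off $\ord(\bar{a})$ from $\ord(a)$ precisely, and the key computation $[a,b]^{p^{k-1}} = 1$ breaks down.
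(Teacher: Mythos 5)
Your argument hinges on the claim that $\bar{G} = G/Z(G)$ is again 1-tuple regular, and the justification you give for it is not available: you invoke ``an automorphism of $G$ identifying them,'' but 1-tuple regularity does not produce automorphisms of $G$ -- it only produces an extending bijection $\Psi \colon G \to G$ such that each assignment $a \mapsto b$, $g \mapsto \Psi(g)$ is an isomorphism on the two 2-generated subgroups involved. Automorphisms mapping one element to another of the same order is precisely 1-ultrahomogeneity, a strictly stronger hypothesis than the one in the lemma. Indeed, the paper explicitly warns that quotients of $\ell$-tuple regular groups need not be $\ell$-tuple regular, and for quotients by subgroups in $\ug$ it only establishes the much weaker Lemma~\ref{lemma:preimageorder} (same-order preimages exist and conjugacy class sizes match). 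Nor is the step easily patched: given same-order preimages $a,b$ of $\bar a,\bar b$ and an extending bijection $\Psi$ for $a \mapsto b$, the restrictions of the resulting isomorphisms do respect membership in $Z(G)$ (since $Z(G) \in \ug$ and orders are preserved), but $\Psi$ need not induce a well-defined \emph{bijection} of $G/Z(G)$, so 1-tuple regularity of the quotient does not follow. Since your entire induction runs through this unproved claim, the proof as written has a genuine gap; the preliminary observations (that $Z(G) = \Omega_j$ has exponent $p^j$ and consists exactly of the elements of order at most $p^j$, and the order formula for $\bar a$) are fine but do not carry the argument on their own.

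For comparison, the paper's proof avoids quotients entirely and works inside $G$: if $c = [a,b]$ had order $\ell \geq \ord(a)$, then a suitable power $d$ of $c$ lies in $G'$ and has order $\ord(a)$; an extending bijection for the single assignment $a \mapsto d$ transports the relation $[a,b]$ to a commutator $[d,\Psi(b)]$ of the same order $\ell$, which now lies in $\gamma_3(G)$ because $d \in G'$. Iterating produces elements of order $\ell$ in every $\gamma_i(G)$, contradicting nilpotency of $G$. This uses only the 1-tuple regularity of $G$ itself, which is exactly what your approach was missing. The final clause of your write-up (deducing $G' \subseteq \Omega_{s-1} = \mho$ via Lemma~\ref{lemma:omegaagemo} and hence powerfulness for odd $p$) matches the paper and is correct once the main inequality is established.
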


\begin{proof}
Let $a,b \in G$ be nontrivial elements and assume that $c \coloneqq [a,b]$ has order $\ell \geq \ord(a)$. Then there exists $d \in G'$ with $\ord(d) = \ord(a)$. Let $\Psi \colon G \to G$ be an extending bijection for the assignment $a \mapsto d$. By definition, we have $\langle a, b \rangle \cong \langle d, \Psi(b)\rangle$ and hence $[d, \Psi(b)] \in \gamma_3(G)$ has order $\ell$. Continuing this way shows that $\gamma_i(G)$ contains an element of order $\ell$ for all $i \in \N$, which is a contradiction to $G$ being nilpotent. By symmetry, we obtain $\ord([a,b]) < \min\{\ord(a), \ord(b)\}$. In particular, we have $G' \subseteq \Omega_{s-1} = \mho$ (see Lemma~\ref{lemma:omegaagemo}). If $p$ is odd, then $G$ is powerful.
\end{proof}

From now on until the end of this section, we restrict our investigation to 2-tuple regular finite groups. 

\begin{lemma}\label{lemma:omegaonlypi}
	Let $G$ be a 2-tuple regular finite group of exponent $p^s$ for some $s \in \N$. For every $i \in \{0, \ldots, s\}$, every element in $\Omega_i$ has order at most $p^i$ and is a $p^{s-i}$-th power. In particular, $\Omega_i$ is 2-tuple regular. 
\end{lemma}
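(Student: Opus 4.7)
The plan is to prove both assertions simultaneously by induction on $i$. The base case $i = 0$ is trivial since $\Omega_0 = \{1\}$. For the induction step, fix $i \in \{1, \ldots, s\}$ and assume, as the induction hypothesis, that $\Omega_{i-1} = \Omega_{i-1}^\ast$, where $\Omega_j^\ast \coloneqq \{x \in G : x^{p^j} = 1\}$. The main goal is to upgrade this to $\Omega_i = \Omega_i^\ast$; both statements of the lemma then follow.

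To obtain $\Omega_i = \Omega_i^\ast$, it suffices to show that $\Omega_i^\ast$ is closed under multiplication (it is automatically closed under inverses and contains $1$). Given $x, y \in \Omega_i^\ast$, Lemma \ref{lemma:1trpowerful} yields $\ord([x, y]) < \min\{\ord(x), \ord(y)\} \leq p^i$, so $[x, y]$ and each of its conjugates in $\langle x, y\rangle$ have order at most $p^{i-1}$. In a two-generated group, $\gamma_2(\langle x, y\rangle)$ is the normal closure of the single commutator $[x, y]$, and so by the induction hypothesis the whole of $\gamma_2(\langle x, y\rangle)$ lies inside $\Omega_{i-1}$. Passing to the abelianization of $\langle x, y\rangle$ yields the congruence $(xy)^p \equiv x^p y^p \pmod{\gamma_2(\langle x, y\rangle)}$, and since $x^p, y^p \in \Omega_{i-1}^\ast = \Omega_{i-1}$ as well, I can conclude $(xy)^p \in \Omega_{i-1}$. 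A second application of the induction hypothesis then gives $(xy)^{p^i} = ((xy)^p)^{p^{i-1}} = 1$, so $xy \in \Omega_i^\ast$, as required.

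Once $\Omega_i = \Omega_i^\ast$ is established, every $x \in \Omega_i$ has order at most $p^i$ by definition. To see that $x$ is a $p^{s-i}$-th power, I would invoke Remark \ref{rem:onetupleregular}: all maximal cyclic subgroups of $G$ have order $p^s$, and $x$ lies in some such $\langle y\rangle$; writing $x = y^m$, the condition $\ord(x) \leq p^i$ forces $p^{s-i} \mid m$, so $x = (y^{m/p^{s-i}})^{p^{s-i}}$. Finally, $\Omega_i$ then consists precisely of the elements of $G$ whose order divides $p^i$, hence belongs to $\ug$, and Lemma \ref{lemma:orderinn} upgrades it to a 2-tuple regular subgroup.

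The main obstacle is the argument of the second paragraph, and in particular the observation that $\gamma_2(\langle x, y\rangle)$ is the normal closure of the single commutator $[x, y]$. Without this reduction, Lemma \ref{lemma:1trpowerful} would not suffice, since commutators of arbitrary elements of $\langle x, y\rangle$ could in principle involve factors of order larger than $p^i$ whose commutator orders the induction hypothesis cannot yet bound.
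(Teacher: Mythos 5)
Your proof is correct, but it takes a genuinely different route from the paper for the main point, namely that every element of $\Omega_i$ has order at most $p^i$. The paper argues by contradiction using 2-tuple regularity directly: if a product $\omega_1\omega_2$ of two elements of order at most $p^i$ had order $p^k>p^i$, then extending the assignment $\omega_1\omega_2 \mapsto \omega'$ shows that \emph{every} element $\omega'$ of order $p^k$ is such a product, giving $\Omega_k \subseteq \Omega_i$, which contradicts $\Omega_i \subseteq \mho(\Omega_k) \subseteq \Phi(\Omega_k)$ coming from Lemma~\ref{lemma:omegaagemo}. You instead run an induction on $i$ whose engine is the commutator order drop of Lemma~\ref{lemma:1trpowerful}: since $\langle x,y\rangle'$ is the normal closure of the single commutator $[x,y]$, all of $\langle x,y\rangle'$ lies in $\Omega_{i-1}$, and the abelianization congruence $(xy)^p \equiv x^p y^p \pmod{\langle x,y\rangle'}$ together with the induction hypothesis gives $(xy)^{p^i}=1$. (The degenerate cases where $x$, $y$ or $[x,y]$ is trivial are immediate and worth a half-sentence, since Lemma~\ref{lemma:1trpowerful} is stated for nontrivial elements.) Your treatment of the remaining assertions --- the $p^{s-i}$-th power statement via Remark~\ref{rem:onetupleregular} and the 2-tuple regularity via $\Omega_i \in \ug$ and Lemma~\ref{lemma:orderinn} --- coincides with the paper's. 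What your route buys: the order and power statements are derived from consequences of 1-tuple regularity only (2-tuple regularity enters solely in the final ``in particular''), and the argument is uniform in $p$; the paper observes such a shortcut (via $\Omega_i=\mho^{s-i}$ and powerfulness) only for odd $p$, and handles $p=2$ through the 2-tuple regularity argument. What the paper's route buys is brevity and no induction, at the cost of invoking the stronger hypothesis.
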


\begin{proof}
	Fix $i \in \{0, \ldots, s\}$. Let $\omega_1, \omega_2 \in \Omega_i$ be elements of order at most $p^i$, and assume that $\omega \coloneqq \omega_1 \omega_2$ has order $p^k$ for some $k > i$. Let $\omega' \in G$ with $\ord(\omega') = p^k$. Using 2-tuple regularity, we find $\omega_1', \omega_2' \in G$ such that the assignment $\omega \mapsto \omega'$, $\omega_1 \mapsto \omega_1'$, $\omega_2 \mapsto \omega_2'$ defines an isomorphism between $\langle \omega, \omega_1, \omega_2\rangle$ and $\langle \omega', \omega_1', \omega_2'\rangle$.  
	In particular, we have $\omega_1', \omega_2' \in \Omega_i$ and hence $\omega' = \omega_1' \omega_2' \in \Omega_i$. This yields $\Omega_k \subseteq \Omega_i$, which is a contradiction to $\Omega_i \subseteq \mho(\Omega_k) \subseteq \Phi(\Omega_k)$ (see Lemma~\ref{lemma:omegaagemo}). Hence $\Omega_i$ has exponent $p^i$. By Lemma~\ref{lemma:orderinn}, $\Omega_i$ is 2-tuple regular. Since all maximal cyclic subgroups of $G$ have order $p^s$ by Remark~\ref{rem:onetupleregular}, every element in $\Omega_i$ is a $p^{s-i}$-th power. We remark that for odd $p$, the statement of the lemma also follows directly by using $\Omega_i = \mho^{s-i}$ and the fact that~$G$ is powerful (see Lemmas~\ref{lemma:omegaagemo} and~\ref{lemma:1trpowerful}).
\end{proof}

\begin{lemma}\label{lemma:commutatorsomega}
	Let $G$ be a 2-tuple regular finite group of exponent $p^s$ for some $s \in \N$. For all $i,j \in \{1, \dots, s\}$, we have $[\Omega_i, \Omega_j] \in \ug$.
\end{lemma}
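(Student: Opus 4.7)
The plan is to prove $[\Omega_i, \Omega_j] \in \ug$ by showing that for every $z \in [\Omega_i, \Omega_j]$ and every $x \in G$ with $\ord(x) = \ord(z)$, one has $x \in [\Omega_i, \Omega_j]$. To this end I will fix, for each such $z$, an expression $z = [a_1, b_1] \cdots [a_r, b_r]$ with $a_k \in \Omega_i$ and $b_k \in \Omega_j$, and induct on $r$. At each stage the goal is to peel off one commutator: produce $c \in [\Omega_i, \Omega_j]$ so that $x = c \cdot x'$ with $\ord(x') = \ord([a_2, b_2] \cdots [a_r, b_r])$, and then invoke the induction hypothesis on $x'$.

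For $r = 0$ one has $z = x = 1$, which is trivial. For the inductive step, the first key observation is that Lemma~\ref{lemma:1trpowerful} forces $\ord(z) < p^{\min(i, j)}$, so both $z$ and $x$ lie in $\Omega_{\min(i, j) - 1} \subseteq \Omega_i$. Since $\Omega_i$ is $2$-tuple (and hence $1$-tuple) regular by Lemma~\ref{lemma:omegaonlypi}, applying an extending bijection in $\Omega_i$ for the assignment $z \mapsto x$ to the element $a_1$ produces $a_1' \in \Omega_i$ such that $z \mapsto x$, $a_1 \mapsto a_1'$ defines an isomorphism $\langle z, a_1 \rangle \to \langle x, a_1' \rangle$ inside $\Omega_i$, and hence inside $G$.

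Next, I would apply $2$-tuple regularity of $G$ to the pair $(z, a_1) \mapsto (x, a_1')$, obtaining an extending bijection $\Psi \colon G \to G$. Evaluating $\Psi$ at $b_1$ yields an isomorphism $\langle z, a_1, b_1 \rangle \cong \langle x, a_1', \Psi(b_1) \rangle$ via $z \mapsto x$, $a_1 \mapsto a_1'$, $b_1 \mapsto \Psi(b_1)$. Under this isomorphism $[a_1, b_1]$ corresponds to $c \coloneqq [a_1', \Psi(b_1)]$, while $z' \coloneqq [a_2, b_2] \cdots [a_r, b_r] = z \cdot [a_1, b_1]^{-1}$ corresponds to $x' \coloneqq x \cdot c^{-1}$. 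Since $\Psi$ preserves orders, $\Psi(b_1) \in \Omega_j$ and so $c \in [\Omega_i, \Omega_j]$, while $\ord(x') = \ord(z')$. Applying the induction hypothesis to $(z', x')$, with $z'$ a product of $r - 1$ commutators from $\Omega_i \times \Omega_j$, yields $x' \in [\Omega_i, \Omega_j]$, and therefore $x = c \cdot x' \in [\Omega_i, \Omega_j]$.

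The main obstacle is constructing the intermediate element $a_1' \in \Omega_i$ that makes the $2$-tuple regularity of $G$ applicable to the pair $(z, a_1) \mapsto (x, a_1')$. This is resolved by exploiting the $2$-tuple regularity of the subgroup $\Omega_i$ itself, together with the order bound from Lemma~\ref{lemma:1trpowerful} which forces both $z$ and $x$ into $\Omega_i$; without such a bound $x$ could leave $\Omega_i$ and the whole construction would collapse.
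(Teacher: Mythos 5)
Your proof is correct, and its core mechanism---moving a commutator to a prescribed element of the same order by first transporting $z \mapsto x$ together with $a_1$ (so that the image of $a_1$ again lies in $\Omega_i$) and then applying the extending bijection to $b_1$---is exactly the first half of the paper's proof. Where you diverge is in the treatment of arbitrary elements of $[\Omega_i,\Omega_j]$: the paper observes that for a product $x=x_1\cdots x_n$ of commutators one has $\ord(x)\le\max\{\ord(x_1),\dots,\ord(x_n)\}$ (via Lemma~\ref{lemma:omegaonlypi}), so $[\Omega_i,\Omega_j]$ contains a single commutator of order at least $\ord(x)$, and it then reduces to the single-commutator case using the power structure (every element of a given order is a power of an element of larger order, Remark~\ref{rem:onetupleregular} and Lemma~\ref{lemma:omegaonlypi}); you instead induct on the number of commutator factors, peeling one off at each step and transporting the whole factorization with a fresh application of $2$-tuple regularity. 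Both routes work; yours avoids the powering argument at the cost of one regularity application per factor. Two minor remarks: the detour through the $2$-tuple regularity of $\Omega_i$ is unnecessary, since $1$-tuple regularity of $G$ already produces $a_1'$ with $\ord(a_1')=\ord(a_1)\le p^i$, hence $a_1'\in\Omega_i$ by Lemma~\ref{lemma:omegaonlypi} (this is what the paper does); and the identity $z'=z\cdot[a_1,b_1]^{-1}$ should read $z'=[a_1,b_1]^{-1}z$, so that $x'=c^{-1}x$ and $x=cx'$---a harmless slip, as $[\Omega_i,\Omega_j]$ is a subgroup and membership of $c$ and $x'$ suffices either way.
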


\begin{proof}
	Let $x \in [\Omega_i, \Omega_j]$. First assume $x = [a,b]$ with $a \in \Omega_i$ and $b \in \Omega_j$, and consider $y \in G$ with $\ord(y) = \ord(x)$. By 2-tuple regularity, there exists $a' \in G$ such that the assignment $x \mapsto y, a \mapsto a'$ defines an isomorphism between $\langle x,a \rangle$ and $\langle y,a'\rangle$. Let $\Psi \colon G \to G$ be an extending bijection. Then $[a,b] = x$ implies $[a', \Psi(b)] = y$, and hence $y \in [\Omega_i, \Omega_j]$ follows. Now let $x \in [\Omega_i, \Omega_j]$ be an arbitrary element and write $x = x_1 \cdots x_n$ for commutators $x_1, \dots, x_n$ as above. Since, for $k \in \{1, \dots, s\}$, the group~$\Omega_k$ consists precisely of the elements of order dividing $p^k$ in $G$ (see Lemma~\ref{lemma:omegaonlypi}), we have $\ord(x) \leq \max\{\ord(x_1), \dots, \ord(x_n)\}$. In particular, $[\Omega_i, \Omega_j]$ contains a commutator $[\omega_i, \omega_j]$ with $\omega_i \in \Omega_i$ and $\omega_j \in \Omega_j$ of order at least $\ord(x)$, so the claim follows by the first part of this proof.
\end{proof}
%

\begin{Remark}\label{rem:numbergenerators} \label{rem:powers}
Let $G$ be a 2-tuple regular finite group of exponent $p^s$ for some $s \in \N$. For every $i \in \{1, \dots, s\}$, we have $\Phi(\Omega_i) = \Omega_{i-1}$ (see Lemmas~\ref{lemma:omegaagemo}, \ref{lemma:1trpowerful} and~\ref{lemma:omegaonlypi}). In particular, we have $p^{d(\Omega_i)} = |\Omega_i : \Omega_{i-1}|$. Taking $p$-th powers defines a surjective map 
$\varphi \colon \Omega_i \to \Omega_{i-1}$. For $x,y \in \Omega_i$ with $x \Omega_{i-1} = y \Omega_{i-1}$, we have $x = y \omega$ for some $\omega \in \Omega_{i-1}$. Writing $C$ for the conjugacy class of $[y,\omega]$ in $G$, we have
$x^p = (y\omega)^p \equiv y^p \omega^p \pmod{\langle C \rangle}.$ Using $\omega^p \in \Omega_{i-2}$ as well as the fact that all elements in $\langle C\rangle$ have order less than $p^{i-1}$ by Lemma~\ref{lemma:1trpowerful}, we find $x^p \equiv y^p \pmod{\Omega_{i-2}}$. Hence~$\varphi$ induces a surjective map $\Omega_i/\Omega_{i-1} \to \Omega_{i-1}/\Omega_{i-2}$. In particular, we obtain 
\[|\Omega_s : \Omega_{s-1}| \geq |\Omega_{s-1}: \Omega_{s-2}| \geq \dots \geq |\Omega|.\] 
In particular, if there exist $i,j \in \{1,\dots, s\}$ with $i \geq j$ and $|\Omega_i : \Omega_{i-1}| = |\Omega_j : \Omega_{j-1}|$, taking $p^{i-j}$-th powers induces a bijection between $\Omega_i/\Omega_{i-1}$ and $\Omega_j/\Omega_{j-1}$.
\bigskip

By Remark~\ref{rem:onetupleregular}, we have $Z(G) = \Omega_j$ for some $j \in \{1, \dots, s\}$. If we additionally assume that $p$ is odd or that $\Omega_2 \subseteq Z(G)$ holds, then $d(Z(G)) \geq d(G) = d(\Omega_s)$ follows by \cite[Theorem III.12.2]{HUP67} and \cite[Theorem 1]{MAN71}, respectively. Moreover, as $Z(G)$ is a homocyclic group of exponent~$p^j$, we have $p^{d(Z(G))} = |\Omega_j : \Omega_{j-1}| = |\Omega|$. In this case, we therefore obtain \[|\Omega_s : \Omega_{s-1}| = |\Omega_{s-1}: \Omega_{s-2}| = \dots = |\Omega|.\] 
\end{Remark}

\subsection{Groups of prime power order which are $2$-tuple regular}\label{sec:classificationpgroups}
In this section, we classify the 2-tuple regular finite groups of prime power order. Throughout, let $p \in \BP$ and consider a 2-tuple regular finite group $G$ of exponent $p^s$ for some $s \in \N$. 
\medskip

Recall that for $i \in \{0, \ldots, s\}$, the group $\Omega_i$ consists precisely of the elements of order at most $p^i$ (see Lemma~\ref{lemma:omegaonlypi}). In other words, we have $\ug = \{1, \Omega, \ldots, \Omega_{s-1}, G\}$ and by Lemma~\ref{lemma:orderinn}, these groups are 2-tuple regular. For every $i \in \{0, \ldots, s\}$, there exists $j \in \{0, \ldots, s\}$ with $Z(\Omega_i) = \Omega_j$ (see Lemma~\ref{lemma:centralizerunionorderclasses}).

\begin{lemma}\label{lemma:technicallemma}
Let $G$ be a non-abelian $2$-tuple regular finite $p$-group. Let $i \in \{1, \ldots, s\}$ be minimal such that $\Omega_i$ is non-abelian, and let $j \in \{1, \ldots, i-1\}$ with $\Omega_j = Z(\Omega_i)$. If $|\Omega_{j+1} : \Omega_j| =  |\Omega|$ holds, we have $[a, \Omega_{j+1}] \subseteq \langle a^{p^{i-1}} \rangle$ for all $a \in \Omega_i$. 
\end{lemma}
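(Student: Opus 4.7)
The plan is to first reduce to the case $\ord(a) = p^i$, then show that $c := [a,b]$ lies in $\Omega_1$ via a class-two commutator calculation, and finally derive a contradiction from $c \notin \langle a^{p^{i-1}}\rangle$ by combining Lemma~\ref{lemma:commutators2tupleregular} with a centralizer count inside $\Omega_{j+1}$.

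First I would dispose of the case $\ord(a) < p^i$, so $a \in \Omega_{i-1}$. If $j+1 \leq i-1$, then $\Omega_{j+1}\subseteq \Omega_{i-1}$, which is abelian by the minimality of $i$, giving $[a,\Omega_{j+1}] = 1$. If instead $j = i-1$, then $Z(\Omega_i) = \Omega_{i-1}\ni a$, so again $[a, \Omega_i] = 1$. Hence I may assume $\ord(a) = p^i$. For $b\in \Omega_{j+1}$, Lemma~\ref{lemma:1trpowerful} gives $\ord([a,b]) < \ord(b) \leq p^{j+1}$, so $c := [a,b] \in \Omega_j = Z(\Omega_i)$. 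In particular $\langle a,b\rangle$ has class at most~$2$, which yields the identity $[a,b^p] = [a,b]^p$; combined with $b^p \in \Omega_j$ centralising $a$, this forces $c^p = 1$, so $c \in \Omega_1$.

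Next I would assume for contradiction that $c \notin \langle a^{p^{i-1}}\rangle$. Since $\langle a^{p^{i-1}}\rangle$ is the unique order-$p$ subgroup of the cyclic group $\langle a\rangle$, this means $c \notin \langle a\rangle$. Centrality of $c$ rewrites the defining relation as $bab^{-1} = ac^{-1}$, so $a$ and $ac^{-1}$ are $G$-conjugate via $b$. For any $c' \in \Omega_1\setminus\langle a\rangle$, both $\langle a, ac^{-1}\rangle = \langle a\rangle\times\langle c\rangle$ and $\langle a, ac'^{-1}\rangle = \langle a\rangle\times\langle c'\rangle$ are abelian of type $C_{p^i}\times C_p$, so the assignment $a \mapsto a$, $ac^{-1} \mapsto ac'^{-1}$ extends to an isomorphism between them. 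Lemma~\ref{lemma:commutators2tupleregular} then produces $b' \in G$ of order $\ord(b) = p^{j+1}$ with $ac'^{-1} = b' a b'^{-1}$, i.e.\ $[a,b'] = c'$; in particular $b' \in \Omega_{j+1}$.

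Finally, consider the commutator map $\phi_a \colon \Omega_{j+1} \to \Omega_1$, $x \mapsto [a,x]$. Since $\Omega_1 \subseteq Z(G)$ (Remark~\ref{rem:onetupleregular}), the identity $[a,xy] = [a,x][a,y]^x$ collapses to $\phi_a(xy) = \phi_a(x)\phi_a(y)$, so $\phi_a$ is a homomorphism. The preceding paragraph shows $\Im(\phi_a) \supseteq \Omega_1 \setminus \langle a^{p^{i-1}}\rangle$; in the case $d(\Omega_1) = 1$ we have $\Omega_1 = \langle a^{p^{i-1}}\rangle$ and there is nothing to prove, so I may assume $d(\Omega_1) \geq 2$, in which case the only subgroup of $\Omega_1$ containing $\Omega_1 \setminus \langle a^{p^{i-1}}\rangle$ is $\Omega_1$ itself, forcing $\Im(\phi_a) = \Omega_1$. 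The hypothesis $|\Omega_{j+1}:\Omega_j| = |\Omega| = |\Omega_1|$ together with $\Omega_j \subseteq \ker\phi_a$ then gives $\ker\phi_a = \Omega_j$, i.e.\ $C_{\Omega_{j+1}}(a) = \Omega_j$. This is the desired contradiction: the element $a^{p^{i-j-1}} \in \langle a\rangle\cap\Omega_{j+1}$ has order $p^{j+1}>p^j$, so it centralises $a$ and lies in $\Omega_{j+1}$ but not in $\Omega_j$. The hardest step will be spotting this final centralizer-count contradiction; everything else is routine class-$2$ commutator bookkeeping plus one application of Lemma~\ref{lemma:commutators2tupleregular}.
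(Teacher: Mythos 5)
Your proof is correct and follows essentially the same route as the paper: you study the commutator homomorphism $\Omega_{j+1}\to\Omega$, use 2-tuple regularity (via Lemma~\ref{lemma:commutators2tupleregular}) to force its image to contain all of $\Omega\setminus\langle a\rangle$ and hence equal $\Omega$, and then obtain the contradiction by the kernel/index count involving $a^{p^{i-j-1}}$ and the hypothesis $|\Omega_{j+1}:\Omega_j|=|\Omega|$. The only differences (explicitly checking that commutators land in $\Omega$, reducing to $\ord(a)=p^i$ via minimality of $i$, and disposing of the cyclic-$\Omega$ case directly rather than via the generalized quaternion argument) are cosmetic.
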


\begin{proof}
Fix $a \in \Omega_i$, and consider the group homomorphism $\varphi \colon \Omega_{j+1} \to \Omega,\ \omega \mapsto [a,\omega]$. For a contradiction, assume $\Im(\varphi) \not \subseteq \langle a\rangle$. By Lemma~\ref{lemma:omegaagemo}, this implies $\ord(a) = p^i$.
Let $x,y \in \Omega \setminus \langle a \rangle$ with $x \in \Im(\varphi)$. The assignment $a \mapsto a$, $x \mapsto y$ defines an isomorphism between $\langle a,x \rangle$ and $\langle a,y \rangle$. By 2-tuple regularity, we obtain $y \in \Im(\varphi)$ and hence $\Omega \setminus \langle a \rangle \subseteq \Im(\varphi)$. Note that $\Omega$ is not cyclic as $G$ is neither cyclic nor a generalized quaternion group. Hence we obtain $\Omega = \langle \Omega \setminus \langle a \rangle \rangle \subseteq \Im(\varphi)$. We have $\langle a^{p^{i-j-1}}, \Omega_j \rangle \subseteq \Ker(\varphi)$ and $a^{p^{i-j-1}} \notin \Omega_j$ by Lemma~\ref{lemma:omegaonlypi}, which yields $|\Ker(\varphi)| \geq p \cdot |\Omega_j|$. Together with the assumption, this implies 
\[|\Omega| = |\Im(\varphi)| = |\Omega_{j+1}/\Ker(\varphi)| < |\Omega_{j+1}: \Omega_j| = |\Omega|,\] 
which is a contradiction. Hence $[a, \Omega_{j+1}] \subseteq \langle a \rangle$ follows for all $a \in \Omega_i$, which implies $[a, \Omega_{j+1}] \subseteq \langle a^{p^{i-1}} \rangle$. 
%
%
\end{proof}

We first consider the case that all quotients $\Omega_i/\Omega_{i-1}$ have the same order. By Remark~\ref{rem:numbergenerators}, this is always the case if $p$ is odd.

\begin{lemma}\label{lemma:indicesequalhomocyclic}
Let $G$ be a $2$-tuple regular finite group of exponent $p^s$ for some $s \in \N$. If 
\begin{equation}\label{eq:assindexequal}
|\Omega_s : \Omega_{s-1}| = |\Omega_{s-1} : \Omega_{s-2}| = \dots = |\Omega_2 : \Omega| = |\Omega|
\end{equation}
holds, then $G$ is homocyclic. 
\end{lemma}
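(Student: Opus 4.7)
I would proceed by induction on $s$. The base case $s=1$ is immediate from Lemma~\ref{lemma:1trpowerful}: every commutator of elements of order $p$ has order strictly less than $p$, hence is trivial, so $G$ is abelian of exponent $p$, i.e., elementary abelian, which is homocyclic.

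For the inductive step $s\geq 2$, I would apply the inductive hypothesis to $\Omega_{s-1}$: by Lemmas~\ref{lemma:orderinn} and~\ref{lemma:omegaonlypi} it is $2$-tuple regular of exponent $p^{s-1}$ and satisfies the truncated index-equality hypothesis, so $\Omega_{s-1}\cong C_{p^{s-1}}^d$ with $p^d=|\Omega|$. By Remark~\ref{rem:numbergenerators}, $\Phi(G)=\Omega_{s-1}$, whence $G$ is $d$-generated and any generating set lifting a basis of $G/\Phi(G)$ consists of $d$ elements of order $p^s$. Since a $d$-generated abelian $p$-group with Frattini subgroup $C_{p^{s-1}}^d$ must be $C_{p^s}^d$, it suffices to prove that $G$ is abelian.

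Assume for contradiction that $G$ is non-abelian. Then $i=s$ is the minimal index with $\Omega_i$ non-abelian, and Remark~\ref{rem:onetupleregular} gives $Z(G)=\Omega_j$ for some $j\in\{1,\ldots,s-1\}$. The hypothesis $|\Omega_{j+1}:\Omega_j|=|\Omega|$ activates Lemma~\ref{lemma:technicallemma}: $[a,\Omega_{j+1}]\subseteq\langle a^{p^{s-1}}\rangle\subseteq\Omega\subseteq Z(G)$ for every $a\in G$. In the favourable case $j=s-1$, this becomes $[G,G]\subseteq\Omega$; picking generators $g_1,\ldots,g_d$ of $G$ of order $p^s$ lifting a basis of $G/\Phi(G)$, the $p^{s-1}$-th powers $\omega_i:=g_i^{p^{s-1}}$ generate $\mho^{s-1}(G)=\Omega$ and, as $d$ generators of a group of order $p^d$, form a basis. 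For $i\neq k$ we then have $[g_i,g_k]\in\langle\omega_i\rangle\cap\langle\omega_k\rangle=1$, so all generators commute, contradicting non-abelianity.

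The main obstacle is to reduce to the case $j=s-1$. My plan is a linear-algebra argument: by Lemma~\ref{lemma:technicallemma}, the assignment $T(\bar a)(\bar\omega):=[a,\omega]$ defines a homomorphism $T\colon G/\Omega_{s-1}\to\mathrm{Hom}(\Omega_{j+1}/\Omega_j,\Omega)$ whose values are rank-$\leq 1$ maps with image $\langle a^{p^{s-1}}\rangle$; the bijection $\mu\colon G/\Omega_{s-1}\to\Omega$, $\bar a\mapsto a^{p^{s-1}}$, supplied by Remark~\ref{rem:numbergenerators}, is additive (clean for odd $p$, and for $p=2$ via nilpotency class~$2$ and the order bound $G'\subseteq\Omega$, which kill the Hall--Petresco cross-terms). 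Expanding $T(\bar a+\bar b)(\bar\omega)\in\langle\mu(\bar a+\bar b)\rangle=\langle\mu(\bar a)+\mu(\bar b)\rangle$ in a basis of $\Omega$ forces $T(\bar a)(\bar\omega)=\gamma(\bar\omega)\,\mu(\bar a)$ for a single linear functional $\gamma\colon\Omega_{j+1}/\Omega_j\to\F_p$, embedding $\Omega_{j+1}/\Omega_j$ into $\F_p$. Combined with $|\Omega_{j+1}:\Omega_j|=|\Omega|$ this forces $|\Omega|\leq p$; and $|\Omega|=p$ makes $G$ either cyclic or a generalized quaternion group of order $\geq 16$ (Remark~\ref{rem:onetupleregular} and Example~\ref{ex:quaterniongroup}), both of which contradict our setup.
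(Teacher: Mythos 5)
Your induction set-up and the reduction to a non-abelian $G$ with $Z(G)=\Omega_j$, $\Omega_{s-1}$ abelian and Lemma~\ref{lemma:technicallemma} applied with $i=s$ are all fine, and your linear-algebra route (the rank-one pairing $T$ and the single functional $\gamma$) is genuinely different from the paper's element-by-element commutator analysis. However, everything hinges on the additivity of $\mu\colon G/\Omega_{s-1}\to\Omega$, $\bar a\mapsto a^{p^{s-1}}$: you need it both in the favourable case (to know that the $\omega_i=g_i^{p^{s-1}}$ generate $\mho^{s-1}(G)=\Omega$ and are independent, which is not automatic from a generating set in a non-abelian group) and in the general case (to identify $\langle\mu(\bar a\bar b)\rangle$ with $\langle\mu(\bar a)\mu(\bar b)\rangle$). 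Remark~\ref{rem:powers} only provides a well-defined surjective \emph{map}, not a homomorphism. For odd $p$ the claim is true, but only via the theory of powerful $p$-groups (Lemma~\ref{lemma:1trpowerful} makes $G$ powerful, and for powerful groups the $p$-power maps between consecutive $\mho$-quotients are homomorphisms); this is not established in the paper and must be cited or proved.

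For $p=2$ the justification you give is incorrect, and this is exactly where the real difficulty of the lemma sits. In the general case $j<s-1$ you only know $[G,\Omega_{j+1}]\subseteq\Omega\subseteq Z(G)$ and, from Lemma~\ref{lemma:1trpowerful}, $G'\subseteq\Omega_{s-1}$; neither nilpotency class $2$ nor $G'\subseteq\Omega$ follows, so the Hall--Petrescu cross-terms are not controlled at all. Even in the favourable case $j=s-1$, where class $2$ and $G'\subseteq\Omega$ do hold, the relevant cross-term is $[y,x]^{\binom{2^{s-1}}{2}}$ with $\binom{2^{s-1}}{2}=2^{s-2}(2^{s-1}-1)$, which is \emph{odd} when $s=2$: for exponent-$4$ groups one has $(ab)^2=a^2b^2[b,a]$, so $\mu$ need not be additive and your claim that class $2$ ``kills the cross-terms'' fails precisely there. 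Note that $s=2$, $p=2$ forces $j=1=s-1$, so this is not a marginal subcase but the whole content of the $2$-group statement in exponent $4$: one must rule out a Suzuki-type configuration with $\Omega=Z(G)$, $[a,G]\subseteq\langle a^2\rangle$ and squaring a bijection $G/\Omega\to\Omega$, which is what the paper's separate and considerably more delicate $p=2$ argument (including its $|\Omega|=4$ subcase) does. As written, your proposal gives no argument for this case, so the lemma is not proved for $p=2$.
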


\begin{proof}
By Remark~\ref{rem:onetupleregular}, it suffices to prove that $G$ is abelian. We show by induction that $\Omega_i$ is abelian for every $i \in \{1, \ldots, s\}$. For $i = 1$, this follows by Remark~\ref{rem:onetupleregular}. Now let $i \in \{2, \ldots, s\}$ and assume that $\Omega_{i-1}$ is abelian. By Lemma~\ref{lemma:commutatorsomega}, we have $\Omega_i' = \Omega_j$ for some $j \in \{0, \ldots, i-1\}$.
\medskip

First assume that $p$ is odd. Consider an element $x\in \Omega_i$ of order $p^i$, and suppose that $y \in \Omega_i$ satisfies $[x,y] \in \Omega \setminus \{1\}$. We choose $y$ of minimal order with this property. By the choice of $y$ and 2-tuple regularity, all elements of smaller order are central in $\Omega_i$. By Remark~\ref{rem:onetupleregular}, we have $y = z^q$ for some element $z \in \Omega_i$ of order $p^i$ and a suitable power $q$ of $p$. First assume $\langle x^{p^{i-1}} \rangle = \langle z^{p^{i-1}} \rangle$. By Remark~\ref{rem:powers}, taking $p^{i-1}$-th powers induces a bijection between $\Omega_i/\Omega_{i-1}$ and $\Omega$. Hence we have $z = x^r \omega$ for some $r \in \N$ not divisible by $p$ and some $\omega \in \Omega_{i-1}$. By Remark~\ref{rem:powers}, we find $y = z^q = \omega' x^{rq}$ with $\ord(\omega') < \ord(y)$, and $[x,y] = [x,z^q] = [x,\omega']$ holds, which is a contradiction to the minimality of $\ord(y)$. Hence $\langle x^{p^{i-1}} \rangle \cap \langle z^{p^{i-1}} \rangle = 1$ follows. 
\medskip

Assume $q > 1$. Applying the commutator relations together with the hypothesis that $\Omega_{i-1}$ is abelian, we obtain
\begin{equation}\label{eq:comm1}
[x,z^p] = [x,z]^p \prod_{t = 1}^{p-1} [z^t,[x,z]].
\end{equation}

Moreover, we obtain
\[ \prod_{t = 1}^{p-1} \left[z^t,[x,z]\right] \equiv \left[\left(\prod_{i= 1}^{p-1} z^t \right),[x,z]\right] \equiv \left[z^{\frac{p(p-1)}{2}},[x,z]\right] \pmod{[\Omega_i,[\Omega_i,\Omega_j]]}.
\]
Note that $[\Omega_i,[\Omega_i, \Omega_j]] \subseteq \Omega_{j-2}$ holds. Since $p$ is odd, we have $z^{p(p-1)/2} \in \Omega_{i-1}$. Since $\Omega_{i-1}$ is abelian, this implies $[z^{p(p-1)/2},[x,z]] = 1$.
By \eqref{eq:comm1}, we obtain $[x, z^p] \equiv [x, z]^p \pmod{\Omega_{j-2}}$. Similarly, one shows $[x^p,z] \equiv [x,z]^p \pmod{\Omega_{j-2}}$.  
Since $\Omega_{i-1}$ is abelian, we find $[x,z^q] = [x, z^p]^{q/p}$ and $[x^q, z] = [x^p,z]^{q/p}$. Together with the above and using $[x,z^q] \in \Omega$, this yields $[x,z^q] = [x^q,z]$. By Lemma~\ref{lemma:technicallemma}, this leads to the contradiction \[[x^q, z] = [x,z^q] \in \langle x^{p^{i-1}} \rangle \cap \langle z^{p^{i-1}} \rangle = 1.\] For $q = 1$, 
we obtain $[x,y] = [x,z] \in \langle x^{p^{i-1}} \rangle \cap \langle z^{p^{i-1}} \rangle = 1$ by Lemma~\ref{lemma:technicallemma}, which is a contradiction. 
%
\medskip

Now let $p = 2$ and fix $x \in \Omega_i$ with $\ord(x) = 2^i$. By Lemma~\ref{lemma:centralizerunionorderclasses}, we have $Z(\Omega_i) = \Omega_k$ for some $k \in \{1, \ldots, i\}$. Suppose $k < i$. Lemma~\ref{lemma:technicallemma} yields $[x,\Omega_{k+1}] \subseteq \langle x^{2^{i-1}} \rangle$. For $\omega_1, \omega_2 \in \Omega_{k+1}$ with $\omega_1^{2^k} = \omega_2^{2^k}$, we have $\omega_1 = \omega_2 \omega$ for some $\omega \in \Omega_k$ (see Remark~\ref{rem:powers}) and hence $[x,\omega_1] = [x,\omega_2]$ follows. Let $y_1 \in \Omega_{k+1}$ with $y_1^{2^k} \notin \langle x \rangle$. We show $[x,y_1] = x^{2^{i-1}}$. To this end, let $y_2 \in \Omega_{k+1}$ with $y_2^{2^k} \notin \langle x \rangle$. The assignment $x \mapsto x, y_1^{2^k}  \mapsto y_2^{2^k}$ defines an isomorphism between $\langle x, y_1^{2^k}\rangle$ and $\langle x, y_2^{2^k}\rangle$. For an extending bijection $\Psi \colon G \to G$, we have $[x, y_1] = [x, \Psi(y_1)]$ and $\Psi(y_1)^{2^k} = y_2^{2^k}$. By the above, we obtain $[x,y_1] = [x, \Psi(y_1)] = [x, y_2]$. Hence if $[x, y_1] = 1$ holds, then $[x,y_2] = 1$ follows. This implies $[x, \Omega_{k+1}] = 1$, which is a contradiction. Hence $[x, y_1] = x^{2^{i-1}}$ holds. Moreover, we obtain $[x,y] = 1$ for every element $y \in \Omega_{k+1}$ of order $2^{k+1}$ with $y^{2^k} \in \langle x \rangle$.
\medskip

For $|\Omega| > 4$, computing $[x,y_1y_2]$ for $y_1, y_2 \in \Omega_{k+1}$ with $\langle x^{2^{i-1}}, y_1^{2^k}, y_2^{2^k} \rangle \cong C_2^3$ leads to a contradiction. Now assume $|\Omega_i : \Omega_{i-1}| = |\Omega| = 4$ and write $\Omega_i/\Omega_{i-1} = \langle a \Omega_{i-1}, b \Omega_{i-1} \rangle$ for some $a,b \in \Omega_i$. By Lemma~\ref{lemma:commutatorsomega}, we find $\ell \in \N$ with $[\Omega_i, \Omega_{i-1}] = \Omega_\ell$. Then $\langle [a,b] [\Omega_i, \Omega_{i-1}] \rangle= \Omega_i'/ [\Omega_i,\Omega_{i-1}] = \Omega_j /\Omega_\ell$ is cyclic. If $\ell < j$ holds, $\Omega_j$ is cyclic, which is a contradiction. Now assume $\ell = j$. If $j = 1$ holds, then for any $\omega \in \Omega_i$, we have $[x, \omega^2] = [x,\omega]^2 = 1$. This yields $\Omega_j = \Omega_\ell = 1$, which is a contradiction. Hence $\Omega_2 \subseteq \Omega_j$ follows. For $\omega \in \Omega_{i-1}$, we have $[x^2,\omega] = 1$ as $\Omega_{i-1}$ is abelian. This yields $x [x,\omega]  x^{-1} = [x,\omega]^{-1}$, so conjugation with $x$ inverts $[x,\omega]$. Taking suitable powers, we thus find an element $y \in \Omega_2$ which is inverted by $x$. In particular, we have $k = 1$ and $[x,y] = y^2$. By the above, this yields $y^2 = x^{2^{i-1}}$. But then the above commutator relations yield $[x,y] = 1$, which is a contradiction.
If $|\Omega| = 2$ holds, $G$ is cyclic as it is not a generalized quaternion group by \eqref{eq:assindexequal}. Again, this is a contradiction. 
%
\end{proof}

\begin{lemma}\label{lemma:2groupsexceptions}
Let $G$ be a 2-tuple regular finite $2$-group. Either $G$ is homocyclic, or it is isomorphic to one of $Q_8$ and $G_{64}$.
\end{lemma}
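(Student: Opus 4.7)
The plan is to argue by strong induction on $|G|$, with the main split based on $|\Omega|$. If $G$ is abelian, Remark~\ref{rem:onetupleregular} immediately yields that $G$ is homocyclic, so I assume $G$ is non-abelian of exponent $2^s$ with $s \geq 2$. Combining Lemma~\ref{lemma:indicesequalhomocyclic} (if the chain $|\Omega_s:\Omega_{s-1}| \geq \dots \geq |\Omega|$ is constant, then $G$ is homocyclic) with the criterion from Remark~\ref{rem:numbergenerators} (under $\Omega_2 \subseteq Z(G)$ the chain is constant), non-homocyclicity forces $\Omega_2 \not\subseteq Z(G)$, so $G$ has a non-central element of order $4$.

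The case $|\Omega| = 2$ is quick: $G$ has a unique involution, so $G$ is cyclic or generalized quaternion, and non-abelianness together with Example~\ref{ex:quaterniongroup} forces $G \cong Q_8$. The heart of the argument is the case $|\Omega| \geq 4$, where the target is $G \cong G_{64}$. I first reduce to exponent $4$: if $s \geq 3$, then $\Omega_2 \lneq G$ is a 2-tuple regular non-abelian $2$-group of exponent $4$ by Lemma~\ref{lemma:omegaonlypi}, with the same $\Omega$; the induction hypothesis then gives $\Omega_2 \cong G_{64}$ (the $Q_8$ alternative is excluded by $|\Omega| \geq 4$). Taking $z \in G$ of maximal order $2^s$ (which exists by Remark~\ref{rem:onetupleregular}), Lemma~\ref{lemma:1trpowerful} forces conjugation by $z$ to act trivially on $\Omega_2/\Omega_1$; applying 2-tuple regularity to a tuple comparing $(z^{2^{s-2}}, a)$ with $(z^{2^{s-2}}, a')$ for order-$4$ elements $a, a' \in \Omega_2$ with equal squares, together with the fact that extending bijections preserve orders, I would derive a contradiction by exploiting that no element of $\Omega_2$ can play the role of $z$.

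For the remaining case $s = 2$ and $|\Omega| \geq 4$: then $Z(G) = \Omega_1$, both $Z(G)$ and $G/Z(G)$ are elementary abelian, and $|G/Z(G)| > |Z(G)|$ by Remark~\ref{rem:numbergenerators}. Here $G$ is determined by the alternating $\F_2$-bilinear commutator form $c \colon G/Z \times G/Z \to Z$ together with the squaring map $\sigma \colon G/Z \to Z$ (whose polarization is $c$). Using Lemma~\ref{lemma:technicallemma} to control $[a, G]$ for non-central $a$, Lemma~\ref{lemma:commutatorsomega} to control the image of $c$, and 2-tuple regularity applied to pairs of order-$4$ elements, I would show that $(c, \sigma)$ has exactly the Suzuki-type structure of the Sylow 2-subgroup of $\PSU(3,4)$, pinning down $G \cong G_{64}$. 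The principal obstacle is this last rigidity step: extracting enough structure from 2-tuple regularity to determine $(c, \sigma)$ uniquely up to isomorphism, and in particular ruling out Suzuki-type 2-groups of larger order.
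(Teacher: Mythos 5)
Your outline reproduces the easy reductions (abelian case, $\lvert\Omega\rvert=2$ giving $Q_8$, reduction towards exponent $4$), but the two steps that carry the actual weight of the lemma are missing. First, in the case $s\geq 3$, $\lvert\Omega\rvert\geq 4$ you assert that $\Omega_2$ is non-abelian and then invoke the induction hypothesis to get $\Omega_2\cong G_{64}$. Nothing you have established justifies this: $\Omega_2\not\subseteq Z(G)$ does not make $\Omega_2$ non-abelian, and the induction hypothesis equally allows $\Omega_2$ to be homocyclic of exponent $4$. That configuration --- $\Omega_{i-1}$ abelian and homocyclic, with the first index jump $\lvert\Omega_i:\Omega_{i-1}\rvert>\lvert\Omega_{i-1}:\Omega_{i-2}\rvert$ occurring at some $i>2$ --- is precisely what the paper rules out with a nontrivial argument: using Lemma~\ref{lemma:technicallemma} and $2$-tuple regularity it shows $[x,y]=x^{2^{i-1}}$ or $1$ for $y\in\Omega_2$ according to whether $y^2\in\langle x\rangle$, and deduces that two elements of order $2^i$ with equal squares differ by an element of $C_{\Omega_i}(\Omega_2)\subseteq\Omega_{i-1}$, so squaring is a bijection $\Omega_i/\Omega_{i-1}\to\Omega_{i-1}/\Omega_{i-2}$, contradicting the jump. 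Your proposal has no substitute for this, and your sketched contradiction in the sub-case $\Omega_2\cong G_{64}$ (``no element of $\Omega_2$ can play the role of $z$'') is also only a gesture; the paper's version is concrete: if $G$ has exponent at least $8$, every $\omega\in\Omega_2$ is a square $g^2$ by Remark~\ref{rem:onetupleregular}, whence $[\omega,g^2]=[\omega,g]^2=1$ forces $\Omega_2$ abelian.

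Second, and decisively, in the exponent-$4$ case you yourself flag the ``rigidity step'' --- determining the pair $(c,\sigma)$ and ruling out Suzuki-type groups of larger order --- as an unresolved obstacle. That step is the core of the lemma, and the cited lemmas do not deliver it by themselves. The paper settles it by an arithmetic/counting argument adapted from Cherlin--Felgner: writing $\lvert G/\Omega\rvert=2^n$, $\lvert\Omega\rvert=2^m$, Chevalley--Warning applied to the quadratic forms of the squaring map gives $n\leq 2m$; $2$-tuple regularity makes the fibres of $\sigma$ over nontrivial elements equinumerous, so $2^m-1\mid 2^n-1$, and $\lvert G:\Omega\rvert>\lvert\Omega\rvert$ forces $n=2m$; finally a counting argument on a fibre $X=\sigma^{-1}(a)$ (constancy of commutator counts via extending bijections, and parity of the degree of a regular graph on $2^m+1$ vertices) forces $m=2$, i.e.\ $\lvert G\rvert=64$, after which $G\cong G_{64}$ is identified. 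Without an argument of this kind your proposal cannot exclude, say, larger groups with elementary abelian center in which all involutions are central, so as it stands it does not prove the statement.
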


\begin{proof}
Write $2^s$ ($s \in \N$) for the exponent of $G$. By Remark~\ref{rem:numbergenerators}, we have $|\Omega_s : \Omega_{s-1}| \geq \dots \geq |\Omega_2 : \Omega| \geq |\Omega|$. If equality holds in every step, $G$ is homocyclic by Lemma~\ref{lemma:indicesequalhomocyclic}. Assume that this is not the case and let $i \in \{2, \ldots, s\}$ be minimal with $|\Omega_i : \Omega_{i-1}| > |\Omega_{i-1}: \Omega_{i-2}|$. Then $\Omega_i$ is non-abelian as it would be homocyclic otherwise. Moreover, we have $\Omega_r(\Omega_{i-1}) = \Omega_{r}$ for $r \leq i-1$ by Lemmas~\ref{lemma:1trpowerful} and~\ref{lemma:omegaonlypi} and hence $\Omega_{i-1}$ is abelian by Lemma~\ref{lemma:indicesequalhomocyclic}. By Remark~\ref{rem:numbergenerators}, we have $\Omega_2 \not \subseteq Z(\Omega_i)$. Remark~\ref{rem:onetupleregular} then yields $Z(\Omega_i) = \Omega$. 
\medskip

For a contradiction, assume $i > 2$. For every element $x \in G$ of order $2^i$, we have $[x, \Omega_2] \subseteq \langle x^{2^{i-1}} \rangle$ (see Lemma~\ref{lemma:technicallemma}). Due to $i > 2$, we have $|\Omega_2 : \Omega| = |\Omega|$. Squaring therefore induces a bijection between $\Omega_2/\Omega$ and~$\Omega$. With this, one can show as in the proof of Lemma~\ref{lemma:indicesequalhomocyclic} that for $y \in \Omega_2$, we have $[x,y] = x^{2^{i-1}}$ if $y^2 \notin \langle x \rangle$ holds, and $[x,y] = 1$ otherwise. 
Now let $x,z \in \Omega_i$ be elements of order $2^i$ with $x^2 = z^2$ and set $\omega = x^{-1}z$. For $y \in \Omega_2$, we have $[y,\omega] = [y,x^{-1}z] = [y,x^{-1}] [y,z]$. 
With $\langle x^{2^{i-1}} \rangle = \langle z^{2^{i-1}} \rangle$, we obtain $[y,\omega] = 1$ for all $y \in \Omega_2$ and hence $\omega \in C_{\Omega_i}(\Omega_2) \subseteq \Omega_{i-1}$. Squaring thus induces a bijection between  $\Omega_i/\Omega_{i-1}$ and $\Omega_{i-1}/\Omega_{i-2}$, which is a contradiction.  
Hence $|\Omega_2 : \Omega| > |\Omega|$ follows and $\Omega_2$ is non-abelian. For $g \in G$ and $\omega \in \Omega_2$, we have $[g,\omega^2] = 1$, which implies $[G, \Omega_2] = \Omega$. Now assume that $G$ contains an element of order 8. By Remark~\ref{rem:onetupleregular}, every element in $\Omega_2$ is of the form $g^2$ for a suitable element $g \in G$. For any $\omega \in \Omega_2$, we then obtain $[\omega,g^2] = [\omega,g]^2 = 1$. Thus $\Omega_2$ is abelian, which is a contradiction. Hence $G = \Omega_2$ follows. 

\medskip

The remaining proof is an adaptation of the proof of \cite[Lemma 17]{CHE91} to our setting. Let $\{x_1, \ldots, x_n\}$ and $\{y_1, \ldots, y_m\}$ be $\F_2$-bases of $G/\Omega$ and $\Omega$, respectively. Consider the map $\sigma \colon G/\Omega \to \Omega$ induced by squaring and write $\sigma(\sum_{i = 1}^n \lambda_i x_i) = \sum_{j = 1}^m p_j(\lambda_1, \ldots, \lambda_n) y_j$ with quadratic forms $p_j \colon \F_2^n \to \F_2$ for $j \in \{1, \ldots, m\}$. By the Chevalley-Warning theorem, $\sigma$ has a nontrivial zero if $n > 2m$ holds, so $n \leq 2m$ follows. By 2-tuple regularity, the number of elements in $G$ squaring to a fixed element $z$ is constant for every choice of $z \in \Omega \setminus \{1\}$. Thus $2^m-1$ divides $2^{n}-1$. Due to $|G:\Omega| > |\Omega|$, we obtain $n = 2m$.
\medskip

For $m = 1$, we obtain $G \cong Q_8$. Now let $m > 1$. Let $a \in \Omega \setminus \{1\}$ and set $X = \sigma^{-1}(a)$. Furthermore, let $Y$ denote the preimage of $X$ in $G$. If $Y' \subseteq \langle a \rangle$ holds, then $H \coloneqq \{h \in G \colon h^2 \in \langle a \rangle \}$ is a subgroup of~$G$. But $H/\Omega$ has order $|X| +1 = 2^m+2$, which is a contradiction. Hence there exist $b \in \Omega \setminus \langle a\rangle$ and $y_1, y_2 \in Y$ with $[y_1, y_2] = b$. Let $b' \in \Omega \setminus \langle a \rangle$ and consider the assignment $y_1 \mapsto y_1$, $b \mapsto b'$. Due to $y_1^2 = a $ and $[y_1, b] = [y_1, b'] = 1$, it defines an isomorphism between $\langle y_1, b \rangle$ and $\langle y_1, b' \rangle$. Let $\Psi \colon G \to G$ be an extending bijection. Then $[y_1, \Psi(y_2)] = b'$ follows. Note that we have $\Psi(y_2) \in Y$ due to $y_1^2 = y_2^2 = a$. This argument shows that $|\{y \in Y \colon [y_1, y] = b\}|$ is constant for every choice of $b \in \Omega \setminus \langle a \rangle$. Due to $\Omega = Z(G)$, also $r \coloneqq |\{x \in X \colon [y_1,y]= b \text{ for some preimage $y \in Y$ of $x$}\}|$ is independent of the choice of $b \in \Omega \setminus \langle a \rangle$. In particular, this implies $r \cdot (|\Omega|- 2) \leq |X|$ and hence $r \leq \frac{2^m+1}{2^m -2} = 1 + \frac{3}{2^m-2}$ follows. Let $T$ be the graph with vertex set $X$ and edges between $x_1$ and~$x_2$ if $[y_1, y_2] = b$ for preimages $y_1$ and $y_2$ of $x_1$ and $x_2$, respectively. Then $T$ is a regular graph with $2^m+1$ vertices, so $r$ is even. This implies $m = 2$ and hence $|G| = 64$. Then $G \cong G_{64}$ can be deduced by either using GAP or arguing via a presentation of $G$ (see \cite[Lemma~17]{CHE91}). 
\end{proof}

We summarize our results in the following classification:

\begin{theorem}\label{theo:classificationpgroups}
Let $G$ be a $2$-tuple regular finite $p$-group. Then $G$ is homocyclic, or isomorphic to $Q_8$ or to~$G_{64}$. 
\end{theorem}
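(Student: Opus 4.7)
The theorem collapses into the two structural results established in this section, so the plan is to simply split on the parity of $p$ and invoke them. Write $p^s$ for the exponent of $G$, and recall that by Lemmas~\ref{lemma:omegaagemo} and~\ref{lemma:omegaonlypi} the chain $1 \subseteq \Omega \subseteq \Omega_2 \subseteq \cdots \subseteq \Omega_s = G$ is well-behaved, with each quotient satisfying the inequalities $|\Omega_s : \Omega_{s-1}| \geq |\Omega_{s-1} : \Omega_{s-2}| \geq \cdots \geq |\Omega|$ from the first half of Remark~\ref{rem:numbergenerators}.

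When $p$ is odd, the idea is to verify that the hypothesis of Lemma~\ref{lemma:indicesequalhomocyclic} is automatic. By Lemma~\ref{lemma:1trpowerful}, $G$ is powerful, and the second part of Remark~\ref{rem:numbergenerators}---which invokes Huppert's bound $d(Z(G)) \geq d(G)$ for odd-order $p$-groups---forces the chain
\[|\Omega_s : \Omega_{s-1}| = |\Omega_{s-1} : \Omega_{s-2}| = \cdots = |\Omega_2 : \Omega| = |\Omega|.\]
Lemma~\ref{lemma:indicesequalhomocyclic} then yields immediately that $G$ is homocyclic, disposing of the odd case.

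When $p = 2$, no additional argument is needed: Lemma~\ref{lemma:2groupsexceptions} already establishes that $G$ is either homocyclic or isomorphic to $Q_8$ or $G_{64}$. Combining the two cases proves the theorem.

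I do not expect any substantive obstacle at this final step itself; all the serious work has been front-loaded into the two structural lemmas. The main difficulties were handled earlier: in Lemma~\ref{lemma:indicesequalhomocyclic}, proving inductively that each $\Omega_i$ must be abelian when the index sequence is constant (requiring careful commutator manipulations, separately for odd $p$ and for $p = 2$), and in Lemma~\ref{lemma:2groupsexceptions}, ruling out non-homocyclic 2-groups of exponent greater than $4$ and then applying a Chevalley--Warning counting argument together with the analysis of squaring to pin down $Q_8$ and $G_{64}$. Once those are in hand, the classification statement is genuinely a one-line assembly.
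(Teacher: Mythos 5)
Your proposal is correct and is essentially identical to the paper's proof: for odd $p$ the equality of the indices $|\Omega_i:\Omega_{i-1}|$ follows from Remark~\ref{rem:numbergenerators}, so Lemma~\ref{lemma:indicesequalhomocyclic} gives homocyclicity, and for $p=2$ Lemma~\ref{lemma:2groupsexceptions} is invoked directly. The extra mention of Lemma~\ref{lemma:1trpowerful} is harmless but not needed at this final step.
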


\begin{proof}
If $p$ is odd, $G$ is homocyclic by Remark~\ref{rem:numbergenerators} and Lemma~\ref{lemma:indicesequalhomocyclic}. Now let $p = 2$. If $G$ is not homocyclic, then $G$ is isomorphic to $Q_8$ or $G_{64}$ by Lemma~\ref{lemma:2groupsexceptions}. 
\end{proof}

\section{Classification of solvable $2$-tuple regular groups}\label{sec:solvable}
In this section, we classify the solvable 2-tuple regular finite groups. Our derivation roughly follows the steps in \cite{CHE91}. However, several fundamental principles used in their proofs do not hold when ultrahomogeneity is replaced by 2-tuple regularity (for instance, the statement of \cite[Lemma 20]{CHE91}), so we take alternative approaches. In Section~\ref{sec:semidirectproducts}, we derive preliminary results on certain semidirect products of $p$-groups. These form the basis for the classification of the solvable 2-tuple regular finite groups, which is given in Section~\ref{sec:classificationsolvable}.  

\subsection{Semidirect products}\label{sec:semidirectproducts}

In this section, we study 2-tuple regularity for finite groups of the form $M \rtimes Q$, where $Q$ is a group of prime power order with $\gcd(|M|,|Q|) = 1$. These results will form a central ingredient in the classification given in Section~\ref{sec:classificationsolvable}.
%

\begin{Remark}\label{rem:powerpnilpotent}
Let $G$ be a 2-tuple regular finite group of the form $G = M \rtimes Q$ with $Q \in \Syl_p(G)$ for some prime number $p \in \BP$.
\begin{enumerate}[(i)]
\item Let $q_1, q_2 \in Q$ be elements of the same order, and suppose that $q_1 = a_1^{p^n}$ holds for some $a_1 \in Q$ and $n \in \N$. By Remark~\ref{rem:centralizer}, there exists an element $b \in G$ of order $\ord(a_1)$ with $q_2 = b^{p^n}$. Now we have $b = m a_2 m^{-1}$ for some $m \in M$ and $a_2 \in Q$, and hence $q_2 = m a_2^{p^n} m^{-1}$ follows. Since $G$ is $p$-nilpotent, $q_2$ and $a_2^{p^n}$ are already conjugate in~$Q$. Writing $q_2 = s a_2^{p^n} s^{-1}$ for some $s \in Q$, we obtain $q_2 = (sa_2s^{-1})^{p^n}$. In other words, also $q_2$ is a $p^n$-th power of an element of order $\ord(a_1)$ in $Q$.
\item Let $R \in \mathcal{U}(Q)$ be a subgroup of $Q$ of exponent $p^\ell$ for some $\ell \in \N$. By (i), every element in $R$ is a power of an element of order $p^\ell$ in $Q$. In particular, these elements generate $R$.
Moreover, for $S \in \mathcal{U}(Q)$ with $S \subsetneq R$, this implies $S \subseteq \Phi(R)$.
\end{enumerate}
\end{Remark}

We first consider the following frequently occurring special case:

\begin{lemma}\label{lemma:poweringgroups}
Let $G$ be a 2-tuple regular finite group. Suppose $G = (P_1 \times \dots \times P_s) \rtimes Q$ for Sylow subgroups $P_1, \ldots, P_s, Q$ of pairwise coprime order. If, for every $i \in \{1, \ldots, s\}$, the group $P_i$ is abelian and~$Q$ induces a group of power automorphisms on $\Omega(P_i)$, then one of the following holds: 
\begin{enumerate}[(i)]
	\item $G \cong P_1 \times \dots \times P_n \times Q$. 
	\item $Q \cong C_{2^n}$ for some $n \in \N$, and for every $i \in \{1, \ldots, n\}$, the group $Q$ either centralizes or inverts $P_i$.
\end{enumerate}
\end{lemma}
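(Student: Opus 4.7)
My plan is to study, for each $i$, the homomorphism $\alpha_i \colon Q \to \F_{p_i}^*$ encoding the scalar action of $Q$ on $\Omega(P_i)$, and to use 2-tuple regularity to force either every $\alpha_i$ to be trivial (giving (i)) or $Q \cong C_{2^n}$ with every $\alpha_i$ taking values in $\{\pm 1\}$ (giving (ii)).

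The first step is to show that $\alpha_i$ is constant on elements of a given order: if $q, q' \in Q$ satisfy $\ord(q) = \ord(q')$, then $\alpha_i(q) = \alpha_i(q')$. Apply 2-tuple regularity to the assignment $q \mapsto q'$ to obtain an extending bijection $\Psi \colon G \to G$. For any $x \in \Omega(P_i) \setminus \{1\}$, the element $\Psi(x)$ has order $p_i$; since $P_i$ is the unique Sylow $p_i$-subgroup of $G$ (because $P_i \trianglelefteq G$ and $\gcd(|M|, |Q|) = 1$), we conclude $\Psi(x) \in \Omega(P_i)$. The isomorphism $\langle q, x\rangle \to \langle q', \Psi(x)\rangle$ preserves the relation $qxq^{-1} = x^{\alpha_i(q)}$, which, combined with $q'$ acting as a scalar on $\Omega(P_i)$, yields $\alpha_i(q) = \alpha_i(q')$.

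Combining this equal-order-equal-value property with the homomorphism property of $\alpha_i$ drives the classification of $Q$. For $q \in Q$ of order $n$ and any $j$ coprime to $n$, the element $q^j$ has order $n$, so $\alpha_i(q)^{j-1} = 1$. Taking $j = 2$ when $n$ is odd forces $\alpha_i(q) = 1$; hence if $|Q|$ is odd, every $\alpha_i$ is trivial. Otherwise $Q$ is a $2$-group and, taking $j = 3$, $\alpha_i(q) \in \{\pm 1\}$. By Theorem~\ref{theo:classificationpgroups}, $Q$ is then homocyclic, $Q_8$, or $G_{64}$. I would rule out the non-cyclic cases by a uniform argument: in each such case one produces $q_1, q_2 \in Q$ so that $q_1$, $q_2$, and $q_1 q_2$ all share the same (maximal) order, and then the equal-order-equal-value property gives $\alpha_i(q_1)^2 = \alpha_i(q_1 q_2) = \alpha_i(q_1)$, forcing $\alpha_i(q_1) = 1$. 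For $Q$ homocyclic of rank $\geq 2$ the generators of two cyclic factors work; for $Q_8$, two of the standard order-$4$ generators have the third as their product; for $G_{64}$, one takes two non-central elements of order $4$ whose product remains non-central. Hence non-trivial $\alpha_i$ forces $Q \cong C_{2^n}$.

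Finally, I would lift the $\pm 1$-action from $\Omega(P_i)$ to all of $P_i$. Since $Q^2 \leq \ker \alpha_i$ fixes $\Omega(P_i)$ pointwise, the standard coprime-action lemma (an automorphism of an abelian $p$-group whose order is coprime to $p$ and which is trivial on $\Omega_1$ is trivial, proved by induction on the exponent) shows that $Q^2$ centralizes $P_i$. The $Q$-action on $P_i$ therefore descends to an involution $\sigma$ on $P_i$ restricting to inversion on $\Omega(P_i)$. Writing $\iota$ for the inversion automorphism of $P_i$, the composition $\sigma \iota$ is an involution fixing $\Omega(P_i)$ pointwise; the coprime lemma again yields $\sigma \iota = 1$, so $\sigma = \iota$. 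The trivial-$\alpha_i$ case follows from the coprime lemma directly and produces $G \cong P_1 \times \cdots \times P_s \times Q$. The main obstacle I anticipate is step three's case analysis, specifically exhibiting the required pair $(q_1, q_2)$ in $G_{64}$; the abelianization argument is uniform but needs a brief look at the structure of the Suzuki $2$-group.
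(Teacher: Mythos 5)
Your overall strategy is close to the paper's: the paper also encodes the action on $\Omega(P_i)$ by a scalar $\lambda_q$, forces $\lambda_q^2 \equiv 1 \pmod{p_i}$ via an extending bijection (for the assignment $q \mapsto q^{-1}$ rather than your coprime-power trick), and lifts from $\Omega(P_i)$ to $P_i$ with \cite[Theorem~5.2.4]{GOR68}, exactly as in your last step. The genuine gap is the step where you invoke Theorem~\ref{theo:classificationpgroups} to conclude that $Q$ is homocyclic, $Q_8$, or $G_{64}$: that theorem applies to \emph{2-tuple regular} $2$-groups, and nothing in your argument shows that the Sylow subgroup $Q$ is 2-tuple regular. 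The paper only proves that subgroups which are unions of order classes inherit 2-tuple regularity (Lemma~\ref{lemma:orderinn}); in the case you actually need (some $\alpha_i$ nontrivial) $Q$ is a non-normal Sylow subgroup, hence not such a subgroup, and the blanket principle is false in general -- $\PSL(2,7)$ is 2-tuple regular by the final classification, yet its Sylow $2$-subgroups are dihedral of order $8$. Without the restricted list for $Q$, your product trick collapses: in a dihedral $2$-group, for instance, the elements of maximal order neither generate nor admit a triple $q_1, q_2, q_1q_2$ all of maximal order, so constancy of $\alpha_i$ on order classes does not by itself force $Q$ to be cyclic.

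The repair uses the semidirect-product structure instead of a classification of $Q$, and it is the route the paper takes. Your equal-order-equal-value property already shows that $\ker\alpha_i = C_Q(\Omega(P_i))$ is a union of order classes of $Q$; alternatively, $\Omega(P_i) \in \ug$ because $P_i = O_{p_i}(G)$ is abelian, so $C_G(\Omega(P_i)) \in \ug$ by Lemmas~\ref{lemma:centralizerunionorderclasses} and~\ref{lemma:oqfo}, and intersecting with $Q$ gives the same conclusion. The missing ingredient is Remark~\ref{rem:powerpnilpotent}, whose proof exploits precisely the normal complement $P_1 \times \dots \times P_s$: it shows that any such proper subgroup of $Q$ is contained in $\Phi(Q)$. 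Since $Q/C_Q(\Omega(P_i))$ has order at most $2$, this makes $Q/\Phi(Q)$ cyclic, hence $Q$ cyclic. With that replacement for your appeal to Theorem~\ref{theo:classificationpgroups}, the remainder of your argument (triviality of all $\alpha_i$ when $|Q|$ is odd, and the coprime-action lifting of inversion from $\Omega(P_i)$ to $P_i$) goes through.
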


\begin{proof}
Let $i \in \{1, \ldots, s\}$ and write $|P_i| = p_i^{\ell_i}$ for some $p_i \in \BP$ and $\ell_i \in \N$. Fix $q \in Q$ and let $\lambda_q \in \Z$ with $q a q^{-1} = a^{\lambda_q}$ for all $a \in \Omega(P_i)$. Let $\lambda' \in \Z$ with $\lambda_q \lambda' \equiv 1 \pmod{p_i}$. We then have $q^{-1} a q = a^{\lambda'}$ for all $a \in \Omega(P_i)$. Let $\Psi \colon G \to G$ be an extending bijection extending the assignment $q \mapsto q^{-1}$ and let $\varphi \colon \langle q, a\rangle \to \langle q^{-1}, \Psi(a) \rangle$ denote the isomorphism defined by $q \mapsto q^{-1},\ a \mapsto \Psi(a)$. We have 
\[\varphi(qaq^{-1}) = \varphi(q) \varphi(a) \varphi(q)^{-1}= q^{-1} \Psi(a)q = \Psi(a)^{\lambda'}.\] 
On the other hand, we obtain $\varphi(qaq^{-1}) = \varphi(a^{\lambda_q}) = \varphi(a)^{\lambda_q} = \Psi(a)^{\lambda_q}$. Comparing the two expressions yields $\lambda_q \equiv \lambda' \pmod{p_i}$, which implies $\lambda_q^2 \equiv 1 \pmod{p_i}$. In particular, we obtain $|Q/C_Q(\Omega(P_i))| \leq 2$. If $|Q|$ is odd, $Q$ centralizes $\Omega(P_i)$, and by \cite[Theorem~5.2.4]{GOR68}, $Q$ acts trivially on $P_i$. This implies $G \cong P_1 \times \dots \times P_s \times Q$. If $Q$ is a Sylow 2-subgroup of $G$, it centralizes or inverts $\Omega(P_i)$, and hence $P_i$ again by \cite[Theorem~5.2.4]{GOR68}. Assume that $Q$ acts on $P_i$ by inversion. Due to $C_G(\Omega(P_i))  \in \ug$ (see Lemmas~\ref{lemma:centralizerunionorderclasses} and~\ref{lemma:oqfo}), we find $C_Q(\Omega(P_i)) \in \mathcal{U}(Q)$. By Remark~\ref{rem:powerpnilpotent}, this implies $C_Q(\Omega(P_i)) \subseteq \Phi(Q)$, and hence $Q$ is cyclic as $Q/C_Q(\Omega(P_i))$ is cyclic.
\end{proof}

This special case is used in the following more general situation:

\begin{lemma}\label{lemma:2groupacting}
Let $G$ be a 2-tuple regular finite group of the form $G = \fo \rtimes Q$ with $Q \in \Syl_2(G)$. Then one of the following cases applies: 
\begin{enumerate}[(i)]
\item $G \cong \fo\times Q$. 
\item $Q \cong C_{2^n}$, and for every odd $p \in \BP$, the group $Q$ either centralizes or inverts $\op$. 
\item $Q \cong Q_8$ centralizes $\op$ for every $p \in \BP_{\geq 5}$ and acts on $O_3(G) \cong C_3^2$ as $\SL(2,3)'$.
\end{enumerate}
\end{lemma}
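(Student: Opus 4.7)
The plan is to reduce to the setting of Lemma~\ref{lemma:poweringgroups} via the $p$-group classification, and then handle the exceptional case separately. By Theorem~\ref{theo:classificationpgroups}, every $\op$ with $p$ odd is homocyclic abelian, so $V_p := \Omega(\op)$ is an elementary abelian $p$-group lying in $\ug$. Since $|Q|$ and $|V_p|$ are coprime, \cite[Theorem~5.2.4]{GOR68} yields $C_Q(V_p) = C_Q(\op)$, so the action of $Q$ on $\op$ is determined by its action on the $\F_p Q$-module $V_p$; in particular, $Q$ acts by power automorphisms on $\op$ if and only if it acts by a scalar on $V_p$.

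If $Q$ acts by a scalar on every $V_p$, Lemma~\ref{lemma:poweringgroups} directly yields conclusion~(i) or~(ii). It remains to handle the case in which there is an odd prime $p$ and elements $q \in Q$, $v \in V_p$ with $qvq^{-1} \notin \langle v \rangle$. I aim to show this forces conclusion~(iii): $Q \cong Q_8$, $p = 3$, $\op \cong C_3^2$ with the natural $\SL(2,3)'$-action, and $Q$ centralizes $O_{p'}(G)$ for every other odd prime~$p'$.

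For this exceptional case, first note that $K := C_Q(V_p) \in \mathcal{U}(Q)$ by Lemma~\ref{lemma:centralizerunionorderclasses}, hence $K \subseteq \Phi(Q)$ by Remark~\ref{rem:powerpnilpotent}; thus $Q/K$ embeds faithfully into $\GL(V_p)$. Two successive 2-tuple regularity arguments on pairs inside $V_p$ then yield: (a)~no nonzero vector of $V_p$ is fixed by $Q/K$, since otherwise a swap $v \leftrightarrow w$ (with $q$ fixing $v$ but moving $w$) extended by $q$ would demand an element of $Q$ fixing $w$ but not $v$, contradicting the fact that equal-order elements of $Q$ induce $\GL(V_p)$-conjugate matrices (a consequence of 1-ultrahomogeneity together with the characteristicity of $V_p$); and (b)~$V_p$ is an irreducible faithful $\F_p[Q/K]$-module of $\F_p$-dimension $2$, by comparing isomorphism types of subgroups $\langle q, v\rangle$ for $v$ ranging over distinct $Q$-orbits.

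The main obstacle is then pinning down $p = 3$ and $Q/K \cong Q_8$. The crucial 2-tuple regularity constraint is that for any two $\F_p$-bases $(v_1, v_2)$ and $(v_1', v_2')$ of $V_p$, the assignment $v_1 \mapsto v_1', v_2 \mapsto v_2'$ must extend by~$q$; equivalently, every $\GL(V_p)$-conjugate $BAB^{-1}$ of the matrix $A$ representing $q$ on $V_p$ must be realized as the action of some element of~$G$. This forces the image of $G$ in $\GL(V_p)$ to be normalized by a large subgroup of $\GL(V_p)$; among the 2-subgroups of $\GL(2, p)$ that are irreducible, fixed-point-free, and satisfy this normalizer condition, only $Q_8 \hookrightarrow \GL(2, 3)$ survives, thereby forcing $p = 3$ and identifying the action as the natural 2-dimensional irreducible $\F_3 Q_8$-module. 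The remaining claim that $Q \cong Q_8$ centralizes $O_{p'}(G)$ for every other odd prime $p'$ then follows by applying the same reductions to $O_{p'}(G)$ and observing that any non-trivial power-automorphism action of $Q_8$ factors through $Q_8/[Q_8,Q_8] \cong C_2 \times C_2$, which cannot embed cyclically in $\F_{p'}^*$, while a non-power action would force a second exceptional $Q_8$-structure as above and turn out to be incompatible with the one already established on $O_3(G)$.
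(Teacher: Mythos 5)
Your opening reduction is fine and matches the paper: by Theorem~\ref{theo:classificationpgroups} the odd part is abelian, and if $Q$ acts by power automorphisms (scalars) on every $\Omega(\op)$, Lemma~\ref{lemma:poweringgroups} gives cases (i)/(ii); also $C_Q(V_p)\in\mathcal{U}(Q)\subseteq\Phi(Q)$ is correct. The problems are all in the exceptional case, and they are substantive. First, your step (a) rests on the claim that ``equal-order elements of $Q$ induce $\GL(V_p)$-conjugate matrices \dots a consequence of 1-ultrahomogeneity'' --- but $G$ is only assumed 2-tuple regular; 1-ultrahomogeneity of such groups is essentially the final theorem of the whole paper, so this is circular. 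Second, step (b), the assertion that $V_p$ is irreducible of $\F_p$-dimension exactly $2$ ``by comparing isomorphism types of subgroups $\langle q,v\rangle$'', is not an argument: it is precisely the hard point. The paper gets there by showing (via Lemma~\ref{lemma:commutators2tupleregular} applied to the non-power pair $x,y$) that \emph{all} nontrivial elements of $V_p$ are conjugate in $G$, then proving $Q/C_Q(V_p)$ is cyclic or generalized quaternion (cyclic centre of the image, $|\Omega|$ of the image equal to $2$, Remark~\ref{rem:powerpnilpotent} and Burnside's basis theorem), so that $|V_p|-1$ is a power of $2$ and Mih\u{a}ilescu's theorem forces $|V_p|=9$. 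None of this, nor any substitute, appears in your sketch; without transitivity on $V_p\setminus\{0\}$ the dimension bound has no support.

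Third, your ``normalizer condition'' step is only asserted. Even granting $\dim V_p=2$ (which you need before the basis-change/class-closure trick makes sense, since 2-tuple regularity lets you prescribe only two generators and extend by one element), the claim that among irreducible fixed-point-free 2-subgroups of $\GL(2,p)$ only $Q_8\le\GL(2,3)$ has the class-closure property requires a genuine counting argument over all odd $p$; and it only constrains the image $Q/C_Q(V_p)$, so you would still have to exclude the cyclic possibility $C_8\le\GL(2,3)$ acting regularly on $C_3^2\setminus\{1\}$ (the paper rules this out with a separate 2-tuple-regularity argument about generating pairs conjugate by an element of order $|Q|$) and to pass from $Q/C_Q(V_p)\cong Q_8$ to $Q\cong Q_8$ itself, i.e.\ show the kernel is trivial and that larger generalized quaternion groups cannot occur (Remark~\ref{rem:centralizer} and Example~\ref{ex:quaterniongroup} in the paper). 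Finally, your last step is wrong as stated: a power-automorphism action of $Q_8$ on $\Omega(O_{p'}(G))$ lands in the cyclic group of scalars and hence factors through a cyclic quotient of $Q_8$, which can be $C_2$; so ``$C_2\times C_2$ cannot embed cyclically'' does not exclude inversion. The correct exclusion is that inversion would make $C_Q(\Omega(O_{p'}(G)))$ an index-2 member of $\mathcal{U}(Q)$, hence contained in $\Phi(Q)$, forcing $Q$ cyclic --- incompatible with $Q\cong Q_8$, exactly as in the proof of Lemma~\ref{lemma:poweringgroups}.
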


\begin{proof}
By Lemma~\ref{lemma:oqfo} and Theorem~\ref{theo:classificationpgroups}, the group $\fo$ is abelian. We assume that $Q$ acts nontrivially on $\fo$ since we are in case (i) otherwise. If, for every odd prime $p \in \BP$, the group $Q$ induces a group of power automorphisms on $\Omega(\op)$, we are in case (ii) by Lemma~\ref{lemma:poweringgroups}. Otherwise there exists an odd prime $p \in \BP$ and elements $x,y \in \Omega(\op)$ with $\langle x \rangle \cap \langle y \rangle = 1$ that are conjugate in~$G$. In particular, $R \coloneqq \Omega(\op)$ is not cyclic. By Lemma~\ref{lemma:commutators2tupleregular}, all nontrivial elements in~$R$ are conjugate in $G$. Write $C_G(R) = F_0(G) \tilde{Q}$ for some proper subgroup $\tilde{Q}$ of $Q$. 
By Lemmas~\ref{lemma:centralizerunionorderclasses} and~\ref{lemma:oqfo}, we have $C_G(R) \in \ug$,
which implies $\tilde{Q} \in \mathcal{U}(Q)$. By Remark~\ref{rem:powerpnilpotent}, we have $\tilde{Q} \subseteq \Phi(Q)$. 
\medskip

The 2-group $\bar{G} \coloneqq G/C_G(R) \cong Q/\tilde{Q}$ has a cyclic center by \cite[Theorem 3.2.2]{GOR68}. Consider an element $x \in G$ with $x C_G(R) \in Z(\bar{G})$, so $[x,G] \subseteq C_G(R)$. For $y \in G$ with $\ord(y) = \ord(x)$, we have $[y,G] \subseteq C_G(R)$ due to 2-tuple regularity and $C_G(R) \in \ug$. This implies $y C_G(R)\in Z(\bar{G})$, and hence the preimage of $Z(\bar{G})$ in~$G$ is contained in $\ug$. 
%
%
%
By Lemma~\ref{lemma:preimageorder}, also $Z(\bar{G}) \in \mathcal{U}(\bar{G})$ holds and hence $Z(\bar{G}) = \Omega_j(\bar{G})$ follows for some $j \in \N$. Since $Z(\bar{G})$ is a nontrivial cyclic 2-group, this yields $|\Omega(\bar{G})| = 2$. Let $L$ be the preimage of $\Omega(Q/\tilde{Q})$ in~$Q$. Then we have $|L :\tilde{Q}| = 2$. Due to $\tilde{Q} \in \mathcal{U}(Q)$, we have $L \in \mathcal{U}(Q)$. By Remark~\ref{rem:powerpnilpotent}, we obtain $\tilde{Q} \subseteq \Phi(L)$. 
By Burnside's theorem, $L$, and hence $\tilde{Q}$, is cyclic. 
If $\tilde{Q}$ is nontrivial, then $\tilde{Q}$, and hence~$Q$, contains a single involution. If $\tilde{Q} = 1$, then $Z(Q)$ is cyclic and $Q$ contains a single involution. In either case,~$Q$ is cyclic or a generalized quaternion group. 

\medskip

\medskip

First assume that $Q$ is cyclic. Since all nontrivial elements in $R$ are conjugate by elements of $Q$, we obtain that $|R|-1$ is a power of $2$. By Mih\u{a}ilescu's Theorem (see \cite{MIH04}), this implies $|R| = 9$ or $|R|  \in \BP$. Since~$R$ is not cyclic, we obtain $R \cong C_3^2$ and hence $Q/\tilde{Q} \leq \GL(2,3)$. Since $Q$ is cyclic and all nontrivial elements in $R$ are conjugate in $G$, we obtain $Q/\tilde{Q} \cong C_8$. In particular, this group acts regularly on $R \setminus \{1\}$. Hence there exist pairs $(x,y) \in  R \times R$ with $\langle x,y \rangle = R$ for which $x$ and $y$ are conjugate by an element of order $|Q|$, and others that do not have this property. This is a contradiction to Lemma~\ref{lemma:commutators2tupleregular}.
 
\medskip

Now assume that~$Q$ is a generalized quaternion group. By Remark~\ref{rem:centralizer} and Example~\ref{ex:quaterniongroup}, this implies $Q \cong Q_8$. Since $Q/\tilde{Q}$ has a cyclic center, we obtain $\tilde{Q} = 1$, so $Q$ acts faithfully on~$R$ and transitively on $R \setminus \{1\}$. This implies $R \cong C_3 \times C_3$ and $Q$ is identified with the derived subgroup of $\SL(2,3)$. For $p \in \BP_{\geq 5}$, this argument shows that $Q$ induces a group of power automorphisms on $\op$. As in Lemma~\ref{lemma:poweringgroups}, it follows that~$Q$ centralizes $\op$. Hence $G$ has the structure described in~(iii). 
\end{proof}

We now study the case that $G$ has a normal Sylow 2-subgroup, beginning with the following special case:

\begin{Example}\label{ex:affinelineargroup}
Consider the affine linear group $G \coloneqq \AGL(1, 2^d)$ for $d \in \N_{\geq 2}$ and suppose that $G$ is 2-tuple regular. 
We have $G = G' \rtimes \langle h \rangle$ for some $h \in G$ of order $2^d -1$. Let $1  \neq a \in G'$ and fix $k \in \{1, \ldots, 2^d-2\}$. Since $G'$ is an elementary abelian $2$-group, the assignment $a \mapsto a, hah^{-1} \mapsto h^k a h^{-k}$ defines an isomorphism between $\langle a, hah^{-1}\rangle$ and $\langle a, h^ka h^{-k}\rangle$. Let $\Psi \colon G \to G$ be an extending bijection and write $\Psi(h) = h^\ell a'$ for some $\ell \in \Z$ and $a' \in G'$. Let $\varphi$ denote the automorphism of $G$ defined by $a \mapsto a, hah^{-1} \mapsto h^k a h^{-k}, h \mapsto \Psi(h)$. Then 
\[
h^k a h^{-k} = \varphi(hah^{-1}) = \varphi(h) \varphi(a) \varphi(h)^{-1} = (h^\ell a') a (h^\ell a')^{-1} = h^\ell a h^{-\ell}.\] 
This forces $h^\ell = h^k$. In particular, $\Aut(G)$ acts transitively on the nontrivial cosets of $G'$ in $G$. This implies $d = 2$ and hence $G = \AGL(1,4) \cong A_4$ follows.
\end{Example}

\begin{lemma}\label{lemma:o2gpodd}
	Let $G$ be a 2-tuple regular finite group of the form $G = O_2(G) \rtimes P$ with $P \in \Syl_p(G)$ for some $p \in \BP$. 
	Then $P$ centralizes $O_2(G)$, or $G$ is isomorphic to one of $A_4$, $\SL(2,3)$, and $G_{192}$. 
\end{lemma}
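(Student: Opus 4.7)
First, $p$ must be odd: if $p=2$, then $O_2(G)\cap P=1$ together with $O_2(G)\subseteq P$ force $O_2(G)=1$, in which case $P$ trivially centralizes $O_2(G)$. By Lemma~\ref{lemma:oqfo}, $M\coloneqq O_2(G)$ is a 2-tuple regular 2-group, so Theorem~\ref{theo:classificationpgroups} leaves three possibilities for $M$: homocyclic, $Q_8$, or $G_{64}$. These correspond respectively to the exceptional conclusions $A_4$, $\SL(2,3)$, $G_{192}$, and I would split the proof along them, assuming throughout that $P$ does not centralize $M$. In the homocyclic case, set $R\coloneqq\Omega(M)\cong C_2^k$: if $P$ induces only power automorphisms on $R$, Lemma~\ref{lemma:poweringgroups} combined with $|P|$ odd forces $P\le C_G(R)$, and coprime action (as used in Lemma~\ref{lemma:poweringgroups}) extends this to $P\le C_G(M)$, yielding the centralizing conclusion. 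Otherwise, there exist $x,y\in R$ with $\langle x\rangle\cap\langle y\rangle=1$ that are $G$-conjugate, and Lemma~\ref{lemma:commutators2tupleregular} upgrades this to the statement that $R\setminus\{1\}$ is a single $G$-conjugacy class.

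\textbf{Structure of the action and descent to $A_4$.} Set $\bar G\coloneqq G/C_G(R)$: the transitive action on $R\setminus\{1\}$ makes $\bar G$ faithful and irreducible on $R$, so Schur's lemma forces $Z(\bar G)$ cyclic. Combining $C_G(R)\in\ug$ (Lemma~\ref{lemma:centralizerunionorderclasses}) with Lemma~\ref{lemma:preimageorder}, in the style of Lemma~\ref{lemma:2groupacting}, yields $Z(\bar G)=\Omega_j(\bar G)$ for some $j$; together with cyclicity this forces $|\Omega(\bar G)|=p$, and since $p$ is odd, $\bar G$ itself is cyclic. A cyclic transitive subgroup of $\GL(k,2)$ is a Singer cycle of order $2^k-1$, and this order being a $p$-power forces, by Mih\u{a}ilescu's theorem, $p=2^k-1$ to be a Mersenne prime. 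I would then pick $x\in P$ of order $p$ whose image generates $\bar G$, so that $R\langle x\rangle\cong\AGL(1,2^k)\le G$, and apply the argument of Example~\ref{ex:affinelineargroup} inside $G$: for $a\in R\setminus\{1\}$ and $1\le m\le 2^k-2$, the assignment $a\mapsto a$, $xax^{-1}\mapsto x^m a x^{-m}$ is an isomorphism of elementary abelian subgroups of $R$, and 2-tuple regularity of $G$ produces $\Psi(x)\in G$ realizing the corresponding conjugation. The identity $C_G(a)=C_G(R)=M\cdot C_P(R)$ (which follows from the regular action of $\bar G$ on $R\setminus\{1\}$) pins down $\Psi(x)$ modulo $C_G(R)$ and yields the Galois-transitivity constraint that forces $k=2$, hence $R\cong V_4$ and $p=3$. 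Eliminating the remaining freedom---$M\supsetneq R$ via Lemma~\ref{lemma:omegaonlypi} (elements of order $4$ in $M$ would create $P$-orbits incompatible with $|\bar G|=3$) and $|C_P(R)|>1$ via centralizer comparisons of order-$3$ elements (Remark~\ref{rem:centralizer})---delivers $G\cong V_4\rtimes C_3\cong A_4$.

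\textbf{The non-abelian cases and the main obstacle.} For $M\cong Q_8$, $\Aut(Q_8)\cong S_4$ has odd part of order $3$, so $P$ acts nontrivially only when $p=3$ and $P/C_P(M)\cong C_3$; a centralizer-class comparison of $3$-elements parallel to the one in the homocyclic case eliminates any nontrivial $C_P(M)$, leaving $G\cong Q_8\rtimes C_3\cong\SL(2,3)$. The $M\cong G_{64}$ case runs analogously: the odd part of $\Out(G_{64})$ relevant to a $p$-group action is a single $C_3$, forcing $p=3$ and $|P|=3$, hence $G\cong G_{64}\rtimes C_3\cong G_{192}$. The main obstacle I anticipate is the descent in the homocyclic case from the Mersenne/Singer-cycle structure to the exact case $k=2$, $M=R$, $|P|=p$: Example~\ref{ex:affinelineargroup} exploits the global structure of $\Aut(\AGL(1,2^d))$, which is not immediately available for the subgroup $R\langle x\rangle\le G$, so one must carefully transport its constraints through 2-tuple regularity of the ambient $G$ and then separately eliminate the residual contributions $C_P(R)$ and $M/R$.
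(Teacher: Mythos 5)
Your outline follows the paper's strategy in its broad strokes (classify $O_2(G)$ via Theorem~\ref{theo:classificationpgroups}, pass to the transitive action on $\Omega(O_2(G))\setminus\{1\}$, obtain a cyclic quotient of order $2^k-1=p$ via Mih\u{a}ilescu, then an $\AGL(1,2^k)$-type argument resp.\ automorphism bounds for $Q_8$ and $G_{64}$), but it has a genuine gap: the elimination of a nontrivial kernel $C_P(O_2(G))$, i.e.\ of $|P|>p$. First, your choice of ``$x\in P$ of order $p$ whose image generates $\bar G$'' presupposes exactly this: $C_P(R)=C_P(O_2(G))$ lies in $\mathcal{U}(P)$, so if it is nontrivial it contains \emph{all} elements of order $p$ of $P$ (Remark~\ref{rem:powerpnilpotent}), and then no order-$p$ element maps onto a generator of $\bar G$ -- so as written the derivation of $k=2$ is circular. (This part is repairable: taking any preimage $x$ of a generator, the relation $\Psi(x)a\Psi(x)^{-1}=x^max^{-m}$ together with $C_G(a)=C_G(R)$ shows that the restriction of the extended isomorphism to $R$ intertwines the action of $x$ with that of $x^m$, forcing every $m$ to be a power of $2$ modulo $p$ and hence $k=2$; the $\AGL(1,2^k)$-subgroup identification is not needed.) Second, and more seriously, the tool you propose for killing $C_P(O_2(G))\neq 1$ -- ``centralizer comparisons of order-$p$ elements (Remark~\ref{rem:centralizer})'' -- cannot work: since $C_G(O_2(G))\in\ug$, elements of equal order are never separated by it, and in the critical configuration all order-$p$ elements of $P$ centralize $O_2(G)$, so there is nothing to compare. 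Concretely, $Q_8\rtimes C_9$ (with $C_9$ acting through $C_3$) has central order-$3$ elements, equal class sizes and conjugate centralizers within every element order, and consistent power maps, so it passes all Remark~\ref{rem:centralizer}-level tests; ruling it out (and its analogues in the elementary abelian and $G_{64}$ cases) is precisely where the paper invests a dedicated $2$-tuple-regularity computation: with $P=\langle h\rangle$ cyclic (Burnside, via $C_P(O_2(G))\subseteq\Phi(P)$), $a$ of maximal order in $O_2(G)$ and $b=h^{-1}ah$, one extends the assignment $a\mapsto a$, $h^pb\mapsto h^{-p}b$ (legitimate because $h^p\in Z(G)$) and evaluates $\varphi(h^pb)$ in two ways to force $|P|=p$. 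Nothing in your proposal replaces this step, and it is needed in all three cases.

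Two further points. In the $G_{64}$ case your claim that the only relevant odd action is a single $C_3$ is not correct as stated: $|\Aut(G_{64})|=15360$ is divisible by $5$, so $\bar P\cong C_5$ must also be considered; the paper excludes $G_{64}\rtimes C_5$ by a GAP computation. And your disposal of homocyclic $M\supsetneq R$ (``order-$4$ elements would create $P$-orbits incompatible with $|\bar G|=3$'', citing Lemma~\ref{lemma:omegaonlypi}) is asserted rather than argued; the paper's reason is that $2$-tuple regularity forces all elements of order $4$ to be conjugate, hence conjugate to their inverses, contradicting $|P|$ odd -- some such argument must be supplied. The remainder of your homocyclic analysis (power automorphisms on $C_2^k$ are trivial, so coprime action gives the centralizing case; $\bar G$ cyclic via $Z(\bar G)\in\mathcal{U}(\bar G)$ and Schur's lemma; regularity and Mih\u{a}ilescu giving $|\bar G|=2^k-1=p$) is sound and essentially the paper's.
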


\begin{proof}
	Assume that $P$ acts nontrivially on $O_2(G)$. 
	By Lemmas~\ref{lemma:centralizerunionorderclasses} and~\ref{lemma:oqfo}, we have $C_G(O_2(G)) \in \ug$. Let $\bar{P} \coloneqq P/C_P(O_2(G))$. As in the proof of Lemma~\ref{lemma:2groupacting}, one can show $C_P(O_2(G)) \in \mathcal{U}(P)$ as well as $Z(\bar{P}) \in \mathcal{U}(\bar{P})$. 
	By Lemma~\ref{lemma:oqfo} and Theorem~\ref{theo:classificationpgroups}, the group $O_2(G)$ is either homocyclic or isomorphic to $Q_8$ or $G_{64}$. Moreover, $\bar{P}$ is isomorphic to a subgroup of $\Aut(O_2(G))$. 
	\medskip
	
	We show $|\bar{P}| = p$. To this end, first assume that $O_2(G)$ is homocyclic. If $O_2(G)$ is cyclic, then $\bar{P}$ is isomorphic to a subgroup of the 2-group $\Aut(O_2(G))$, which is a contradiction. Now suppose that $O_2(G)$ is not elementary abelian. It is then easy to see that all elements of order $4$ are conjugate in $G$. In particular, every element is conjugate to its inverse, which is a contradiction to $|P|$ being odd. Hence $O_2(G)$ is elementary abelian and~$\bar{P}$ permutes the nontrivial elements in $O_2(G)$ transitively. By \cite[Theorem 3.2.2]{GOR68}, the group $Z(\bar{P})$ is cyclic. Due to $Z(\bar{P}) \in \mathcal{U}(\bar{P})$, also $\Omega(\bar{P})$ and hence $\bar{P}$ are cyclic. This yields $|\bar{P}| = |O_2(G)| -1$, and hence $|\bar{P}| = p$ follows by Mih\u{a}ilescu's Theorem. For $O_2(G) \cong Q_8$, we obtain $\bar{P} \cong C_3$ due to $|\Aut(Q_8)| = 24$. For $O_2(G) \cong G_{64}$, we have $\bar{P} \cong C_3$ or $\bar{P} \cong C_5$ due to $|\Aut(G_{64})| = 15360$. In any case, we find $|\bar{P}| = p$. 
\medskip

Due to $C_P(O_2(G)) \in \mathcal{U}(P)$, we have $C_P(O_2(G)) \subseteq \Phi(P)$ (see Remark~\ref{rem:powerpnilpotent}). 
Due to $|P: C_P(O_2(G))| = p$, the group~$P$ is cyclic by Burnside's theorem. 
Write $P = \langle h \rangle$ for some $h \in G$ and suppose $|P| > p$ for a contradiction. 
Let $a$ be an element of maximal order in $O_2(G)$. Set $b \coloneqq h^{-1} a h$ and consider the assignment $a \mapsto a$, $h^pb \mapsto h^{-p} b$. Due to $h^p  \in Z(G)$, it defines an isomorphism between $\langle a, h^pb\rangle \cong \langle a,b\rangle \times \langle h^p \rangle$ and $\langle a, h^{-p}b \rangle \cong \langle a,b \rangle \times \langle h^{-p}\rangle$. Let $\Psi \colon G \to G$ be an extending bijection and write $\Psi(h) = h^\ell g $ for $g \in O_2(G)$ and $\ell \in \Z$. Let $\varphi$ denote the automorphism of $G$ defined by $a \mapsto a$, $h^p b \mapsto h^{-p} b$, $h \mapsto \Psi(h)$. We have 
\[\varphi(h^p) =  (h^\ell g)^p = h^{\ell p} g^p [g, h^{\ell(p-1)}] \cdots [g, h^{\ell}] = h^{\ell p} g^{p+1} = h^{\ell p} \pmod{O_2(G)'},\] since $p$ is odd. Let $c \in O_2(G)'$ with $\varphi(h^p) = h^{\ell p}c$. 
Then
\[\varphi(h^pb) = \varphi(h^p) \varphi(h^{-1} a h) =  h^{\ell p} c h^{-\ell} a h^{\ell}.\] 
On the other hand, we have $\varphi(h^pb) = h^{-p}b = h^{-p} h^{-1}a h$ by assumption. We thus obtain $h^{\ell p} O_2(G)= h^{-p} O_2(G)$, which yields $\ell  \equiv -1 \pmod{|P|/p}$. In particular, $h^{\ell}$ acts as $h^{-1}$ on $O_2(G)$. Thus $ch^{-\ell} ah^{\ell}  = ch a h^{-1}  \neq h^{-1}a h =  b$ as $\ord(c) < \ord(a)$ holds in all cases. This is a contradiction. Hence $|P| = p$ follows. If $O_2(G)$ is elementary abelian, this yields $G \cong \AGL(1,2^d)$. By Example~\ref{ex:affinelineargroup}, we then obtain $G \cong A_4$. If $O_2(G) \cong Q_8$, we obtain $G \cong \SL(2,3)$. Finally, for $O_2(G) \cong G_{64}$, we have $P \cong C_3$ or $P \cong C_5$. Using GAP, it is then easily verified that $P \cong C_3$ holds and that we have $G \cong G_{192}$. 
\end{proof}

\subsection{Classification}\label{sec:classificationsolvable}

Using the results of Section~\ref{sec:semidirectproducts}, we now classify the solvable 2-tuple regular finite groups. Throughout, we assume that $G$ is a 2-tuple regular finite group. 
\medskip

We first study the action of certain normal subgroups of $G$ on $F_0(G) = \prod_{p \in \mathbb{P} \setminus \{2\}} \op$. Recall that $F_0(G)$ is abelian by Lemma~\ref{lemma:oqfo} and Theorem~\ref{theo:classificationpgroups}. In particular, we have $\fo \subseteq C_G(F(G))$.

\begin{lemma}\label{lemma:oqmodf}
Let $q \in \BP$ be an odd prime. Then the action on $\fo$ induced by $O_q(G/F(G))$ is trivial.
\end{lemma}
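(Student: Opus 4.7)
The plan is to prove the stronger assertion that $O_q(G/F(G)) = 1$, which immediately yields the triviality of the induced action on $\fo$. Let $H$ denote the preimage of $O_q(G/F(G))$ in $G$; the goal then reduces to showing $H = F(G)$. By Lemma~\ref{lemma:oqfo}(ii) we have $H \in \ug$, so $H$ is itself $2$-tuple regular by Lemma~\ref{lemma:orderinn}. Since $H/F(G)$ is a $q$-group and $G$ is solvable, I would choose a Sylow $q$-subgroup $Q$ of $H$ with $H = F(G)\cdot Q$ and $Q\cap F(G) = O_q(G)$, and set $N \coloneqq O_2(G)\times \prod_{p \neq q,\, p\text{ odd}} O_p(G)$, the Hall $q'$-complement of $F(G)$. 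Then $H = N\rtimes Q$, which puts $H$ in the setting of Remark~\ref{rem:powerpnilpotent}. If I can establish $[Q, N] = 1$, then $Q$ becomes a normal $q$-subgroup of $H$, so $Q \subseteq O_q(H) = O_q(G) \subseteq Q$, giving $Q = O_q(G)$ and hence $H = NQ = F(G)$, as desired.

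The core step is therefore to show that $Q$ centralizes $N$, which I would do by adapting the case analysis in the proof of Lemma~\ref{lemma:2groupacting} from a Sylow $2$-subgroup to a Sylow $q$-subgroup (odd $q$), and from the ambient group $G$ to the $2$-tuple regular subgroup $H$. If $Q$ induces power automorphisms on $\Omega(O_p(G))$ for every prime $p$ dividing $|N|$, then Lemma~\ref{lemma:poweringgroups} applied to $N\rtimes Q$ combined with $|Q|$ being odd (which rules out the inversion case) directly yields $Q \subseteq C_G(N)$. Otherwise, there exist an odd prime $p \neq q$ dividing $|N|$ and $x, y \in \Omega(O_p(G))$ with $\langle x\rangle \cap \langle y \rangle = 1$ that are $G$-conjugate, and by Lemma~\ref{lemma:commutators2tupleregular} all nontrivial elements of $\Omega(O_p(G))$ become $G$-conjugate. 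Imitating the non-power-automorphism analysis of Lemma~\ref{lemma:2groupacting}, the quotient $Q/C_Q(\Omega(O_p(G)))$ has cyclic center; since $q$ is odd, any $q$-group with a unique subgroup of order $q$ is cyclic, so $Q/C_Q(\Omega(O_p(G)))$ is cyclic of $q$-power order acting regularly on $\Omega(O_p(G))\setminus\{1\}$. This forces $|\Omega(O_p(G))| - 1$ to be a power of $q$; Mih\u{a}ilescu's Theorem then admits only $|\Omega(O_p(G))| = 9$ with $q = 2$, contradicting $q$ odd.

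The main obstacle I anticipate is the faithful transfer of the case analysis from Lemma~\ref{lemma:2groupacting} --- originally phrased for the ambient group $G$ of the form $\fo \rtimes Q$ --- to the present setting of the $2$-tuple regular subgroup $H$ of the form $N \rtimes Q$, where $N$ additionally contains $O_2(G)$ beyond the $q'$-part of $\fo$. In particular, the required $\mathcal{U}$-properties, centralizer identifications, and applicability of Remark~\ref{rem:powerpnilpotent} must all be re-derived inside $H$; this should be possible because $H$ is itself $2$-tuple regular, but the bookkeeping across the various primes $p \mid |N|$ acting simultaneously on $Q$ is delicate, and the power-automorphism dichotomy must be applied uniformly.
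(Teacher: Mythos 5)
Your reduction to the stronger claim $O_q(G/F(G)) = 1$ (equivalently $[Q,N]=1$, with $N$ containing $O_2(G)$) cannot work, because that stronger claim is false. Take $G = A_4 \times C_5$, which is solvable and $2$-tuple regular by Theorem~\ref{theo:solvable}, and $q = 3$: here $F(G) = C_2^2 \times C_5$, so $O_3(G/F(G)) \cong C_3 \neq 1$, and a Sylow $3$-subgroup $Q$ of the preimage acts nontrivially on $O_2(G) \cong C_2^2$ — yet the lemma holds, since the action on $\fo = C_5$ is trivial. (The same phenomenon occurs inside $A_4$, $\SL(2,3)$ and $G_{192}$ themselves.) The failure is visible in your case distinction: in the first branch you demand power automorphisms on $\Omega(\op)$ for \emph{every} prime $p$ dividing $|N|$, including $p = 2$, while in the ``otherwise'' branch you only allow the failure to occur at an odd prime $p \neq q$. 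The configuration in which the power-automorphism condition fails precisely on $\Omega(O_2(G))$ — exactly what happens in the examples above — is covered by neither branch, and no argument can close it, since there $[Q, O_2(G)] \neq 1$. Moreover, even in your first branch Lemma~\ref{lemma:poweringgroups} is not applicable to $N \rtimes Q$ when $O_2(G) \cong Q_8$ or $G_{64}$, because that lemma requires all factors $P_i$ to be abelian.

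The repair is to leave $O_2(G)$ out of the picture altogether, which is what the paper's proof does: one only controls the action of $Q$ on $\Omega(\op)$ for \emph{odd} primes $p \neq q$, where $\op$ is homocyclic, hence abelian. In that restricted setting your second branch is essentially sound: if some element of the preimage does not normalize every cyclic subgroup of $\Omega(\op)$, then by Lemma~\ref{lemma:commutators2tupleregular} all nontrivial elements of $\Omega(\op)$ are conjugate, and since $F(G)$ centralizes $\op$ the conjugating elements may be taken in $Q$; a $q$-group acting transitively on the $|\Omega(\op)|-1$ nontrivial elements forces the even number $|\Omega(\op)|-1$ to be a power of the odd prime $q$, which is already a parity contradiction — no appeal to Mih\u{a}ilescu (whose conclusion you also misstate) or to cyclicity or regularity of $Q/C_Q(\Omega(\op))$ is needed. (The paper reaches the same contradiction differently: a non-normalized cyclic subgroup would force $a$ and $a^{-1}$ to be conjugate by an element of $Q$, which has odd order.) With the non-power case excluded for all odd $p \neq q$, the computation inside the proof of Lemma~\ref{lemma:poweringgroups}, together with $|Q|$ odd, gives $[Q,\op]=1$ for these $p$; this is the paper's route to the lemma, and your attempt should be reorganized along these lines rather than via $O_q(G/F(G)) = 1$.
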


\begin{proof}
Let $B$ be the preimage of $O_q(G/F(G))$ in $G$. By Lemma~\ref{lemma:oqfo}, the group $B$ is $2$-tuple regular. By the Schur-Zassenhaus Theorem, we have $B = (\prod_{p \in \BP \setminus \{q\}} O_p(G)) \rtimes Q$ with $Q \in \Syl_q(B)$.
Let $p \in \BP \setminus \{q\}$ be an odd prime and consider $1 \neq a \in \Omega(\op)$. If $xax^{-1} \notin \langle a \rangle$ for some $x \in B$, then $a$ and $a^{-1}$ are conjugate in~$B$ (see Lemma~\ref{lemma:commutators2tupleregular}). Write $a^{-1} = bab^{-1}$ for some $b \in B$. Due to $F(G) \subseteq C_G(a)$, we may assume $b \in Q$, which is a contradiction. Hence $x$ normalizes every subgroup of $\Omega(\op)$, so it acts on $\Omega(\op)$ by a power automorphism. 
By Lemma~\ref{lemma:poweringgroups}, $Q$ acts trivially on $\fo$.
\end{proof}

\begin{Corollary}\label{cor:fo}
We have $F_0(G/ \fo) = 1$. In particular, if $|G|$ is odd, then $G$ is abelian. 
\end{Corollary}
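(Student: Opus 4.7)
The plan is to prove $F_0(G/F_0(G))=1$ one odd prime at a time, and then deduce the ``in particular'' clause. Fix an odd prime $q$, set $\bar G=G/\fo$, and suppose for contradiction that $O_q(\bar G)\neq 1$. Let $H$ denote the preimage of $O_q(\bar G)$ in $G$; the aim is to show $H=\fo$, which contradicts $H/\fo=O_q(\bar G)\neq 1$.

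First I would collect structural data on $H$. By Lemma~\ref{lemma:oqfo}(i), $\fo\in\ug$, so Lemma~\ref{lemma:oqfo}(ii) applied with $N=\fo$ and $p=q$ places $H$ in $\ug$; hence $H$ is $2$-tuple regular by Lemma~\ref{lemma:orderinn}. Since $|\fo|$ is odd and $H/\fo$ is a $q$-group, $|H|$ is odd, so $O_2(H)=1$ and $F(H)=F_0(H)$. Moreover $H$ is solvable, being an extension of a nilpotent group by a $q$-group. Note also that $H$ is characteristic, hence normal, in $G$.

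Next I would apply Lemma~\ref{lemma:oqmodf} to the $2$-tuple regular group $H$. The nilpotent normal subgroup $\fo$ of $H$ lies in $F(H)$, so $H/F(H)$ is a quotient of the $q$-group $H/\fo$ and is itself a $q$-group; hence $O_q(H/F(H))=H/F(H)$. Lemma~\ref{lemma:oqmodf} then says that $H/F(H)$ acts trivially on $F_0(H)=F(H)$, i.e.\ $H\subseteq C_H(F(H))$. Solvability of $H$ forces $C_H(F(H))\subseteq F(H)$, so $H=F(H)=F_0(H)$ is nilpotent of odd order. For each odd prime $p$, $O_p(H)$ is characteristic in $H\unlhd G$, hence normal in $G$, giving $O_p(H)\subseteq O_p(G)$; the reverse inclusion follows from $O_p(G)\subseteq \fo\subseteq H$. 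Thus $H=\prod_{p\text{ odd}}O_p(G)=\fo$, the desired contradiction.

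For the ``in particular'' clause, assume $|G|$ is odd. Then $G$ is solvable by Feit--Thompson, and $F(G/\fo)=F_0(G/\fo)=1$ by the first part, which in a solvable group forces $G/\fo=1$. Hence $G=\fo$ is nilpotent of odd order, a direct product of $2$-tuple regular odd-order Sylow subgroups; each such Sylow is homocyclic by Theorem~\ref{theo:classificationpgroups}, so $G$ is abelian. The main obstacle throughout is bridging Lemma~\ref{lemma:oqmodf}, which concerns $O_q(G/F(G))$, with the statement at hand, which concerns $O_q(G/\fo)$; the key trick is passing from $G$ to the $2$-tuple regular subgroup $H$ of odd order, where $F(H)=F_0(H)$, so that the lemma applies to $H$ verbatim.
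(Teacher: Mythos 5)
Your proof is correct, but it routes the key step differently from the paper. The paper works with the preimage $X$ of $O_q(G/\fo)$ inside $G$ itself: since $XF(G)/F(G)$ is a normal $q$-subgroup of $G/F(G)$, it lies in $O_q(G/F(G))$, so Lemma~\ref{lemma:oqmodf} applied to $G$ already gives that $X$ centralizes $\fo$; writing $X=\bigl(\prod_{p\neq 2,q}O_p(G)\bigr)\rtimes Q$ by Schur--Zassenhaus, the product is then direct, $Q$ is the unique (hence characteristic) Sylow $q$-subgroup of the normal subgroup $X$, so $Q\subseteq O_q(G)\subseteq\fo$ and $X\subseteq\fo$. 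You instead make the preimage $H$ the ambient group: you invoke Lemma~\ref{lemma:oqfo}(ii) to get $H\in\ug$, hence $2$-tuple regularity of $H$, note $F(H)=F_0(H)$ since $|H|$ is odd and $O_q(H/F(H))=H/F(H)$, apply Lemma~\ref{lemma:oqmodf} verbatim inside $H$, and finish with $C_H(F(H))\subseteq F(H)$ for solvable $H$ and the identification $O_p(H)=O_p(G)$. This is sound; what it costs is the extra input (Lemma~\ref{lemma:oqfo}(ii), tuple regularity of $H$, the self-centralizing Fitting subgroup), whereas the paper's version needs no regularity of the preimage at all and is shorter; what it buys is that the lemma applies in $H$ without any bridging between $G/\fo$ and $G/F(G)$. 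For the ``in particular'' clause the paper leaves the deduction implicit; your Feit--Thompson argument ($G$ solvable, $F(G/\fo)=F_0(G/\fo)=1$ forces $G=\fo$, whose odd Sylow subgroups are homocyclic by Theorem~\ref{theo:classificationpgroups}) is the intended and correct completion.
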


\begin{proof}
Let $q \in \BP$ be an odd prime and let $X$ be the preimage of $O_q(G/\fo)$ in $G$. Write \[X = \left(\prod_{p \in \BP \setminus \{2,q\}} O_p(G)\right) \rtimes Q\] with $Q \in \Syl_q(X)$ as in the proof of Lemma~\ref{lemma:oqmodf}. Since $X F(G)/F(G) \subseteq O_q(G/F(G))$ holds, $X$ acts trivially on $F_0(G)$ and hence we have $X = (\prod_{p \in \BP \setminus \{2,q\}} O_p(G)) \times Q \subseteq \fo$.
This implies $O_q(G/\fo) = 1$.
\end{proof}

%

%
%

In the following, let $H$ be the preimage of $F(G/\fo)$ in $G$. By Corollary~\ref{cor:fo}, we have $F(G/\fo) = O_2(G/\fo)$. In particular, this yields $H = \fo \rtimes S$ for some $S \in \Syl_2(H)$. By Lemma~\ref{lemma:oqfo}, we have $H \in \ug$ and $H$ is 2-tuple regular. In the following, we distinguish the cases $H \neq F(G)$ and $H = F(G)$.

\begin{lemma}\label{lemma:hneqfg}
Assume $H \neq F(G)$. Then one of the following holds: 
	\begin{enumerate}[(i)]
	\item We have $G \cong F_0(G) \rtimes Q$ with $Q \cong C_{2^n}$ for some $n \in \N$, and for every odd prime $p \in \BP$, the group $Q$ centralizes or inverts $\op$. 
	\item We have $G = A \times B$ for a 2-tuple regular group $A$ of order coprime to 6 and $B \cong C_3^2 \rtimes Q_8$ with $Q_8$ acting as $\SL(2,3)'$ on $C_3^2$.
	\end{enumerate}
\end{lemma}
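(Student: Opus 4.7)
The plan is to apply Lemma~\ref{lemma:2groupacting} to the 2-tuple regular normal subgroup $H = F_0(G) \rtimes S$ (note $F_0(H) = F_0(G)$, since for each odd prime $p$ the subgroup $O_p(H)$ is characteristic in $H \triangleleft G$, hence normal in $G$, forcing $O_p(H) = O_p(G)$), and then to lift the resulting classification of $H$ to $G$ via Corollary~\ref{cor:fo}. Case~(i) of Lemma~\ref{lemma:2groupacting} is impossible here: it would give $H = F_0(G) \times S$, so $S = O_2(H)$ would be characteristic in $H$ and hence normal in $G$, forcing $S = O_2(G)$ and $H = F(G)$, contrary to hypothesis. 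Thus we are in case~(ii) or~(iii) of Lemma~\ref{lemma:2groupacting}.

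Set $\bar G \coloneqq G/F_0(G)$ and $\bar H \coloneqq H/F_0(G)$. By Corollary~\ref{cor:fo}, $F_0(\bar G) = 1$, so $F(\bar G) = O_2(\bar G) = \bar H$; solvability of $\bar G$ gives $C_{\bar G}(\bar H) = Z(\bar H)$, and $\bar G/Z(\bar H)$ embeds into $\Aut(\bar H)$. In case~(ii), $\bar H \cong C_{2^n}$, so $Z(\bar H) = \bar H$ and $\Aut(\bar H)$ is a 2-group; hence $\bar G$ is itself a 2-group, and $\bar G = O_2(\bar G) = \bar H$, i.e.\ $G = H$, which directly yields conclusion~(i) of the lemma.

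In case~(iii), $\bar H \cong Q_8$, $Z(\bar H) \cong C_2$, $\Aut(Q_8) \cong S_4$, so $\bar G/Z(\bar H)$ is a subgroup of $S_4$ containing $\bar H/Z(\bar H) = V_4$, hence lies in $\{V_4, D_8, A_4, S_4\}$. The $D_8$ possibility is excluded because $\bar G$ would then be a 2-group, forcing $\bar G = O_2(\bar G) = \bar H$, but $|\bar G| = 16 \neq 8 = |\bar H|$. The $A_4$ and $S_4$ possibilities correspond to $\bar G \cong \SL(2,3)$ and $\bar G \cong \GL(2,3)$ respectively (the unique extensions of $A_4$ or $S_4$ by $C_2$ that contain $Q_8$). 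In both, the action $\bar G \to \Aut(O_3(G)) \cong \GL(2,3)$ is faithful, since its kernel is a normal subgroup of $\bar G$ whose restriction to $\bar H = Q_8$ is trivial by Lemma~\ref{lemma:2groupacting}(iii). Consequently $\bar G$ acts transitively on $O_3(G) \setminus \{1\}$, yielding $|C_G(x)| = |G|/8 \in \{3, 6\}\cdot|F_0(G)|$ for $x \in O_3(G) \setminus \{1\}$. Any nontrivial $\bar y_0 \in \bar G$ of order~3 acts on $O_3(G)$ as a unipotent of order~3, fixing only a 1-dimensional subspace; so the Sylow 3-subgroup $P_3$ of $G$ (of order~27, containing $O_3(G)$ with index~3) must be non-abelian — otherwise any lift $y_0$ of $\bar y_0$ in $P_3$ would centralise $O_3(G)$ — and is thus the Heisenberg group of exponent~3. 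Choosing $y_0 \in P_3 \setminus O_3(G)$ gives an order-3 element of $G$ with $|C_G(y_0)| \leq |C_{\bar G}(\bar y_0)| \cdot |C_{F_0(G)}(y_0)| \leq 6 \cdot (|F_0(G)|/3) = 2|F_0(G)|$, contradicting $|C_G(x)| = |C_G(y_0)|$ required by Remark~\ref{rem:centralizer}.

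Hence $\bar G = \bar H = Q_8$, so $G = H = F_0(G) \rtimes Q_8$ with $Q_8$ centralising $O_p(G)$ for $p \geq 5$ and acting on $O_3(G)$ as $\SL(2,3)'$. Setting $A \coloneqq \prod_{p \geq 5} O_p(G)$ (abelian with homocyclic Sylow subgroups by Lemma~\ref{lemma:oqfo} and Theorem~\ref{theo:classificationpgroups}, of order coprime to~6, hence 2-tuple regular) and $B \coloneqq O_3(G) \rtimes Q_8 \cong C_3^2 \rtimes Q_8$, we obtain $G = A \times B$ as in conclusion~(ii). The principal technical obstacle will be the centralizer comparison in the $\SL(2,3)$/$\GL(2,3)$ subcases of case~(iii), namely verifying faithfulness of the $\bar G$-action on $O_3(G)$ and identifying $P_3$ as the Heisenberg group.
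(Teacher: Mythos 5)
Your overall strategy — pass to $\bar G = G/F_0(G)$, identify $\bar H = F(\bar G) = O_2(\bar G)$, and bound $\bar G$ through $\bar G/Z(\bar H) \hookrightarrow \Aut(\bar H)$ — is a genuinely different (and in places cleaner) route than the paper, which works with a Sylow $2$-subgroup of $G$ itself, Burnside's normal $p$-complement theorem, and the embedding $G/F_0(G) \hookrightarrow \Aut(O_3(G)) \cong \GL(2,3)$. Your exclusion of case (i) of Lemma~\ref{lemma:2groupacting}, your treatment of case (ii) via $\Aut(C_{2^n})$ being a $2$-group, the $D_8$ exclusion, and the final assembly $G = A \times B$ are all correct (granting solvability of $G$ for $C_{\bar G}(F(\bar G)) = Z(F(\bar G))$, which is harmless in context since the paper only applies the lemma to solvable groups and its companion Lemma~\ref{lemma:hfg} uses solvability too; also, ``unique extension of $S_4$ by $C_2$ containing $Q_8$'' is not quite right because of the binary octahedral group, but your faithfulness argument makes the abstract identification of $\bar G$ unnecessary anyway). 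However, the crucial step — excluding the $A_4$ and $S_4$ possibilities, i.e.\ $\bar G$ acting as $\SL(2,3)$ or $\GL(2,3)$ on $O_3(G)$ — has a genuine gap, exactly at the point you flagged. The claim that the non-abelian Sylow $3$-subgroup $P_3$ of order $27$ ``is thus the Heisenberg group of exponent 3'' is unjustified: a non-abelian group of order $27$ containing a normal $C_3^2$ on which the quotient acts as a transvection can just as well be the modular group $M_{27}$ of exponent $9$. Worse, in the configuration you are analysing it cannot be the exponent-$3$ group: by Lemma~\ref{lemma:oqfo} we have $O_3(G) \in \ug$, so $O_3(G) \cong C_3^2$ contains \emph{every} element of order $3$ of $G$; hence there is no element of order $3$ in $P_3 \setminus O_3(G)$, every $3$-element lifting $\bar y_0$ has order $9$, and the element $y_0$ on which your centralizer comparison via Remark~\ref{rem:centralizer} rests does not exist. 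With $y_0$ of order $9$ the comparison $|C_G(x)| = |C_G(y_0)|$ for $x \in O_3(G)$ is no longer available, so the contradiction collapses and $\bar G \cong \SL(2,3)$, $\GL(2,3)$ are not actually excluded.

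The paper closes precisely this case by a different device, which you could substitute: after writing $G = \prod_{p \geq 5} O_p(G) \times K$ with $K/O_3(G) \leq \GL(2,3)$ (equivalent to your faithfulness step, since $C_G(O_3(G)) = F_0(G)$), take $k \in K$ whose image modulo $O_3(G)$ has order $3$ — then $k^3 \in O_3(G) \setminus \{1\}$, exactly because $O_3(G) \in \ug$ forbids order-$3$ elements outside $O_3(G)$ — and take $x$ acting on $O_3(G)$ by inversion (the central involution of the $Q_8$). Computing $[x,k^3]$ in two ways gives $[x,k^3] = k^3 \neq 1$ on the one hand, and on the other hand $[x,k^3] = [x,k]\cdot k[x,k]k^{-1}\cdot k^2[x,k]k^{-2} = 1$, since $[x,k] \in O_3(G)$ (the image of $x$ is central in $\GL(2,3)$) and $\bar k$ acts unipotently, so $1 + \bar k + \bar k^2 = 0$ on $O_3(G)$. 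Replacing your Heisenberg/centralizer step by this (or any other argument that handles the exponent-$9$ Sylow $3$-subgroup) repairs the proof; the remainder of your argument can stand as written.
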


\begin{proof}
Due to $H \neq F(G)$, $S$ acts nontrivially on $F_0(G)$. By Lemma~\ref{lemma:2groupacting}, either $S$ is cyclic, or we have 
\begin{equation}\label{eq:h}
	H \cong \left(\prod_{p \in \BP_{\geq 5}} \op\right) \times (C_3^2 \rtimes Q_8).
\end{equation}

Let $Q \in \Syl_2(G)$. In either of the above cases, $Q$ contains a single involution due to $H \in \ug$. Hence $Q$ is cyclic or a generalized quaternion group. In the first case, $G$ has a normal 2-complement, so we obtain $G = F_0(G) \rtimes Q$, and $Q$ centralizes or inverts $\op$ for every odd prime $p \in \BP$ (see Lemma~\ref{lemma:2groupacting}). 
\medskip

Now let $Q$ be a generalized quaternion group. By Remark~\ref{rem:centralizer} and Example~\ref{ex:quaterniongroup}, we obtain $Q \cong Q_8$. We argue analogously to \cite[Lemma~24]{CHE91} that $G$ has a normal 2-complement. By \eqref{eq:h}, we have $O_3(G) \cong C_3^2$. The group $C_G(O_3(G))$ is a 2-tuple regular group of odd order and hence abelian (see Corollary~\ref{cor:fo}), so we obtain $C_G(O_3(G)) = \fo$. Hence $G/\fo$ embeds into $\Aut(O_3(G)) \cong \GL(2,3)$. In particular, $G/\fo$ is a $\{2,3\}$-group. Hence $G = \prod_{p \in \BP_{\geq 5}} \op \times K$ for a $\{2,3\}$-subgroup $K$ of $G$ with $K/O_3(G) \leq \GL(2,3)$. 
\medskip

Assume that $K/O_3(G)$ is not a 2-group. Let $k \in K$ is an element whose image $k O_3(G)  \in K/O_3(G)$ has order~$3$, and let $x \in K$ act on $O_3(G)$ by inversion. Since $k^3 \in O_3(G) \setminus \{1\}$ holds, we have $[x,k^3] = k^{3} \neq 1$. On the other hand, we find $[x,k^3] = [x,k] \cdot k [x,k]k^{-1} \cdot k^2 [x,k] k^{-2} = 1$. This is a contradiction. Hence $K/O_3(G)$ is a 2-group, so $K \cong C_3^2 \rtimes Q_8$ follows. 
\end{proof}


\begin{lemma}\label{lemma:hfg}
	Assume $H = F(G)$. Then one of the following holds: 
	\begin{enumerate}[(i)]
\item $G$ is nilpotent. 
\item We have $G \cong A \times B$ for a 2-tuple regular group $A$ of order coprime to 6 and $B \in \{A_4,\SL(2,3), G_{192}\}$.
	\end{enumerate}
\end{lemma}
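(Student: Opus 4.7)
The plan is first to decode $H = F(G)$. Writing $H = F_0(G) \rtimes S$ with $S$ a Sylow 2-subgroup of $H$, we find that $S$ is characteristic in the nilpotent group $F(G) = F_0(G) \times S$ and hence normal in $G$, forcing $S = O_2(G)$ and $F(G) = F_0(G) \times O_2(G)$. Moreover, the identity $H/F_0(G) = O_2(G/F_0(G))$ forces $O_2(\bar{G}) = 1$ for $\bar{G} := G/F(G)$, since any larger normal 2-subgroup of $\bar{G}$ would pull back in $G/F_0(G)$ to a normal 2-subgroup strictly larger than $O_2(G/F_0(G))$.

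\textbf{Extracting $Y \in \{A_4, \SL(2,3), G_{192}\}$.} Assume $G$ is not nilpotent. Then $\bar{G} \neq 1$, and solvability combined with $O_2(\bar{G}) = 1$ yields some odd $p$ with $O_p(\bar{G}) \neq 1$. Let $X$ be its preimage in $G$: by Lemma~\ref{lemma:oqfo}(ii), $X \in \mathcal{U}(G)$, so $X$ is 2-tuple regular, and by Lemma~\ref{lemma:oqmodf}, $X \subseteq C_G(F_0(G))$, placing $F_0(G)$ in $Z(X)$. Now $X$ is not nilpotent (else $X \subseteq F(G)$), and its action on $O_2(G)$ is nontrivial (else $X \subseteq C_G(F(G)) \subseteq F(G)$ by solvability). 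I next argue $O_p(G) = 1$: since $O_p(G) \in \mathcal{U}(G)$, every element of order $p$ in $G$ lies in $O_p(G) \subseteq F_0(G) \subseteq C_G(O_2(G))$, whereas the nontrivial action of $X$ on $O_2(G)$ supplies an element in the Sylow $p$-subgroup of $X$ acting nontrivially; matching $p$-power orders using the power structure of $X$ and comparing $|C_G(\cdot) \cap O_2(G)|$ via Remark~\ref{rem:centralizer}(i) together with $O_2(G) \in \mathcal{U}(G)$ yields the desired contradiction. With $O_p(G) = 1$, the orders $|F_0(G)|$ and $|X/F_0(G)|$ are coprime, so Schur-Zassenhaus gives $X = F_0(G) \times Y$ with $Y \cong O_2(G) \rtimes P$ for $P \in \Syl_p(Y)$. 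By Corollary~\ref{cor:directproduct}, $Y$ is 2-tuple regular; since $P$ cannot centralize $O_2(G)$ (otherwise $P$ would be a normal $p$-subgroup of $G$, contradicting $O_p(G) = 1$), Lemma~\ref{lemma:o2gpodd} applied to $Y$ forces $Y \in \{A_4, \SL(2,3), G_{192}\}$, so $p = 3$ and $P \cong C_3$.

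\textbf{Classifying $\bar{G}$ and concluding.} The previous analysis applies to every odd prime $p$ with $O_p(\bar{G}) \neq 1$, so only $p = 3$ contributes and $F(\bar{G}) = O_3(\bar{G}) \cong C_3$. Solvability gives $\bar{G}/F(\bar{G}) \hookrightarrow \Aut(C_3) = C_2$, so $|\bar{G}| \in \{3, 6\}$. To rule out $|\bar{G}| = 6$: the action of $G$ on $F_0(G)$ factors through $G/C_G(F_0(G))$, of order at most 2 (since $X \subseteq C_G(F_0(G))$). If this quotient is trivial, then $F_0(G) \subseteq Z(G)$ and Schur-Zassenhaus (coprime orders) splits $G = F_0(G) \times K$ with $K \cong G/F_0(G)$ of order $2|Y|$ satisfying $K/O_2(G) \cong S_3$; then $K$ has two distinct $\Aut(K)$-classes of involutions (those inside $O_2(G)$ and those outside), contradicting the 1-tuple regularity of $K$ inherited from $G$ via Corollary~\ref{cor:directproduct}. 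If the quotient equals $C_2$, the 2-part of a lift of a transposition in $\bar{G}$ produces an involution $t \in G \setminus C_G(F_0(G))$, so $|C_G(t) \cap F_0(G)| < |F_0(G)| = |C_G(u) \cap F_0(G)|$ for any involution $u \in O_2(G)$, contradicting Remark~\ref{rem:centralizer}(i) combined with $F_0(G) \in \mathcal{U}(G)$. Hence $|\bar{G}| = 3$, so $G = X = F_0(G) \times Y$, and taking $A := F_0(G)$ (of order coprime to 6 and 2-tuple regular by Lemma~\ref{lemma:oqfo}) and $B := Y$ establishes case (ii).

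The main technical obstacle is the $O_p(G) = 1$ step: since 1-tuple regularity already forces every order-$p$ element of $G$ into $O_p(G)$, the contradiction must be extracted at a higher $p$-power order, where the exponent of $O_p(G)$ may fall short of the exponent of the Sylow $p$-subgroup of $X$. Reconciling these orders will likely require invoking the maximal-cyclic-order invariant from Remark~\ref{rem:centralizer}(ii) and the detailed power-structure results of Section~\ref{sec:powerstructure} applied to $X$.
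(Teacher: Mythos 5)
Your overall route parallels the paper's (find an odd prime $p$ with $O_p(G/F(G)) \neq 1$, reduce to Lemma~\ref{lemma:o2gpodd}, exclude $\bar{G} \cong S_3$), but two of your steps have genuine gaps. The first is the claim $O_p(G) = 1$, which your argument needs for both Schur--Zassenhaus splittings ($X = F_0(G) \times Y$ and later $G = F_0(G) \times K$) and which you yourself flag as unresolved. The sketched mechanism cannot close it: since $C_G(O_2(G))$ lies in $\mathcal{U}(G)$, the set of element orders occurring inside and outside this subgroup are automatically disjoint, so comparing $|C_G(\cdot) \cap O_2(G)|$ for two elements of equal order never produces a contradiction in the problematic configuration -- namely when every $p$-element acting nontrivially on $O_2(G)$ has order exceeding the exponent of $O_p(G)$ (think of a putative $O_2(G) \rtimes C_{p^2}$ acting through $C_{p^2}/C_p$ with $O_p(G) \cong C_p$ central). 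Ruling that shape out is essentially the whole difficulty, and no combination of Remark~\ref{rem:centralizer} and the power-structure results is exhibited that does it. The paper sidesteps the issue entirely: it never claims $O_p(G)=1$ at this stage, but uses $C_G(F(G)) \subseteq F(G)$ to get a faithful action of $O_p(\bar{G})$ on $O_2(G)$ and adapts the analysis of Lemma~\ref{lemma:o2gpodd} to that quotient action, so that $p = 3$ and $O_p(\bar{G}) \cong C_3$ follow without any splitting of $X$; the final decomposition $G = A \times B$ with $B = O_2(G) \rtimes P$, $P \in \Syl_3(G)$, is taken only after $\bar{G} \cong C_3$ is known, and Lemma~\ref{lemma:o2gpodd} applied to $B$ then yields the list (and, only retroactively, $O_3(G)=1$).

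The second gap is the exclusion of $|\bar{G}| = 6$. You argue via ``two distinct $\Aut(K)$-classes of involutions (inside and outside $O_2(G)$)'' and via ``an involution $t \in G \setminus C_G(F_0(G))$'', but 1-tuple regularity does not imply that elements of equal order are automorphism-conjugate (that is 1-ultrahomogeneity), and, more fundamentally, involutions outside $O_2(G)$, hence outside $C_G(F_0(G))$, cannot exist at all: $O_2(G) \in \ug$ already contains every involution of $G$. So the outside class you compare against is empty, and the 2-part of a lift of a transposition has order at least $4$ rather than being an involution -- your second sub-case argument never gets off the ground. The actual content of this step (as in the paper, following Cherlin--Felgner) is to show that such an order-$\geq 4$ lift forces an impossibility, and this genuinely depends on the isomorphism type of $O_2(G)$: for $O_2(G)$ elementary abelian one manufactures an involution outside $O_2(G)$ via $(xb)^2 = 1$; for $O_2(G) \cong Q_8$ the relevant extension $2.S_4$ has a unique involution, so the contradiction must instead come from the order of a 2-element inducing an outer automorphism; and $G_{64}$ needs a separate argument about $\Aut(G_{64})$. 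Your proposal skips this case analysis entirely, so the $S_3$ case is not excluded as written.
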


\begin{proof}
Suppose that $G$ is not nilpotent, and let $\bar{G} \coloneqq G/F(G)$. Since $G$ is solvable, we have $O_p(\bar{G}) \neq 1$ for some $p \in \BP$. 
Due to $H = F(G)$ and Corollary~\ref{cor:fo}, we have $O_2(G/\fo) = F(G/\fo) = F(G)/\fo$. Now 
\[O_2(\bar{G}) \cong O_2\bigl((G/\fo)\big/(F(G)/\fo)\bigr) = O_2\bigl((G/\fo)/O_2(G/\fo)\bigr) = 1.\] 
Hence $p \neq 2$ follows. By Lemma~\ref{lemma:oqmodf}, the action of $O_p(\bar{G})$ induced on $F_0(G)$ is trivial. Let $S$ denote the preimage of $O_p(\bar{G})$ in $G$.

\medskip

Again, $O_2(G)$ is either homocyclic or isomorphic to $Q_8$ or $G_{64}$. First assume that $O_2(G)$ is homocyclic, so $F(G)$ is abelian. In particular, $O_p(\bar{G})$ acts faithfully on $O_2(G)$. As in the proof of Lemma~\ref{lemma:o2gpodd}, one can show that this implies $O_2(G) \cong C_2^2$ and $O_p(\bar{G}) \cong C_3$. Now assume $O_2(G) \cong Q_8$. Since $S/C_S(O_2(G))$ is a subgroup of $\Aut(Q_8)$ which is a not a 2-group, we obtain $O_p(\bar{G}) \cong C_3$. If $O_2(G) \cong G_{64}$ holds, we analogously obtain $O_p(\bar{G}) \cong C_3$ or $O_p(\bar{G}) \cong C_5$. As in Lemma~\ref{lemma:o2gpodd}, the latter option can be excluded, so we find $O_p(\bar{G}) \cong C_3$ in all cases. In particular, we must have $p = 3$, so $F(\bar{G}) = O_3(\bar{G})$ follows. 
\medskip

For a contradiction, assume that $F(\bar{G})$ is a proper subgroup of $\bar{G}$. Due to $C_{\bar{G}}(F(\bar{G})) = F(\bar{G})$, we then have $\bar{G} \cong S_3$. 
%
We argue as in the proof of \cite[Lemma~26]{CHE91} that this situation leads to a contradiction. Assume $O_2(G) \cong C_2^2$ and let $x \in G$ be a 2-element such that $x F(G) \in \bar{G}$ is an involution. Then $\ord(x) = 4$ follows. Let $b \in O_2(G) \setminus \langle x^2 \rangle$. Then $xbx^{-1} = x^2 b$ yields $(xb)^2 = xbxb = xbx^{-1}x^2b = 1$, which is a contradiction. 
Now let $O_2(G) \cong Q_8$. Then $G$ induces the full automorphism group of $O_2(G)$. Let $g \in G$ be a 2-element inducing an outer automorphism of order~2 of $O_2(G)$. Then $g^2 \in C_G(O_2(G)) \subseteq F(G)$ follows. This implies $g^2 \in Z(O_2(G)),$ so $\ord(g) \leq 4$, and hence $g \in O_2(G)$ follows, which is a contradiction. The case $O_2(G) \cong G_{64}$ cannot occur as $\Aut(G_{64})$ does not contain a normal subgroup isomorphic to $S_3$. 
\medskip

Hence $\bar{G} = O_3(\bar{G}) \cong C_3$ follows. This yields $G = A \times B$ with $A \coloneqq \prod_{q \in \BP_{\geq 5}} O_q(G)$ and $B \coloneqq O_2(G) \rtimes P$ for $P \in \Syl_3(G)$. By Lemma~\ref{lemma:o2gpodd}, we obtain $B \in \{A_4, \SL(2,3), G_{192}\}$, which concludes the proof.
\end{proof}

In summary, we obtain the following characterization of solvable $2$-tuple regular groups: 

\begin{theorem}\label{theo:solvable}
Let $G$ be a finite solvable group. Then $G$ is 2-tuple regular if and only if $G = G_1 \times G_2$ for subgroups $G_1$ and $G_2$ of coprime order, where $G_1$ is an abelian group with homocyclic Sylow subgroups, and one of the following holds for $G_2$: 
\begin{enumerate}[(i)]
\item $G_2 = 1$.  
\item $G_2$ is isomorphic to $Q_8$ or $G_{64}$. 
\item $G_2 \cong (M_1 \times \dots \times M_s) \rtimes Q$, where $M_1, \ldots, M_s$ are homocyclic groups of odd, pairwise coprime prime power order, and for every $i \in \{1, \ldots, s\}$, the group $Q \cong C_{2^n}$ ($n \in \N$) acts on $M_i$ by inversion. 
\item $G_2$ is isomorphic to $A_4$, to $\SL(2,3)$, to $C_3^2 \rtimes Q_8$, or to $G_{192}$.
\end{enumerate} 
\end{theorem}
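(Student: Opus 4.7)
The plan is to split the argument into a forward direction (\emph{only if}), which assembles the classification from the structural lemmas already proven, and a backward direction (\emph{if}), which verifies 2-tuple regularity for each listed family.

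For the forward direction, let $G$ be a solvable 2-tuple regular group and set $H$ to be the preimage of $F(G/\fo)$ in $G$ as in the setup before Lemma~\ref{lemma:hneqfg}. Apply the dichotomy $H \neq F(G)$ versus $H = F(G)$ and invoke Lemmas~\ref{lemma:hneqfg} and~\ref{lemma:hfg} respectively. The nilpotent subcase in Lemma~\ref{lemma:hfg}(i) is handled via Corollary~\ref{cor:directproduct} and Theorem~\ref{theo:classificationpgroups}: $G$ splits as a coprime direct product of Sylow subgroups, each of which is homocyclic or isomorphic to $Q_8$ or $G_{64}$, and collecting the abelian Sylow factors as $G_1$ and the (at most one) non-abelian 2-Sylow as $G_2$ produces cases (i) and (ii). For each of Lemma~\ref{lemma:hneqfg}(ii) and Lemma~\ref{lemma:hfg}(ii), the statement already writes $G = A \times B$ with $A$ of order coprime to $6$ and $B \in \{C_3^2 \rtimes Q_8, A_4, \SL(2,3), G_{192}\}$; since $A$ is 2-tuple regular of order coprime to $6$, it is abelian by Corollary~\ref{cor:fo} and its Sylow subgroups are homocyclic by Theorem~\ref{theo:classificationpgroups}, producing case (iv). The remaining possibility from Lemma~\ref{lemma:hneqfg}(i) is $G = \fo \rtimes Q$ with $Q \cong C_{2^n}$ centralizing or inverting each $\op$; splitting $\fo$ accordingly into a centralized part $N$ and inverted factors $M_1, \ldots, M_s$ yields $G \cong N \times ((M_1 \times \dots \times M_s) \rtimes Q)$, which is case (iii) when $s \geq 1$ and collapses back to case (i) when $s = 0$.

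For the backward direction, each listed family must be shown to be 2-tuple regular. Case (i) follows because any homocyclic $p$-group $C_{p^\ell}^k$ is ultrahomogeneous: $\Aut(C_{p^\ell}^k) = \GL_k(\Z/p^\ell \Z)$ acts transitively on ordered tuples with a prescribed isomorphism type of span, and Corollary~\ref{cor:directproduct} promotes this to coprime direct products. Case (ii) follows from the ultrahomogeneity of $Q_8$ and $G_{64}$ recorded in \cite{CHE91, CHE00}, and case (iv) from the corresponding statements for the four small exceptional groups, supplemented by a brief direct check using the specific semidirect product structure. Case (iii) is the genuinely new verification: given an isomorphism $\varphi$ between two 2-generated subgroups, decompose each generator through the semidirect product and exploit that the inversion action of $Q$ on each $M_i$ forces every 2-generated subgroup to have an essentially canonical presentation, from which the extending bijection $\Psi$ can be built componentwise on the abelian normal part and the cyclic complement.

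The hard part will be case (iii) of the backward direction. An extending bijection is a single global map $\Psi \colon G \to G$ that must simultaneously work for every possible third generator, so the key difficulty is producing $\Psi$ uniformly from the isomorphism between 2-generated subgroups rather than merely finding one image per element. The forward direction, by contrast, is essentially bookkeeping on top of Lemmas~\ref{lemma:hneqfg} and~\ref{lemma:hfg}.
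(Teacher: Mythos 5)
Your forward direction is essentially the paper's: split off the nilpotent case via the Sylow decomposition and Theorem~\ref{theo:classificationpgroups}, and in the non-nilpotent case combine Lemmas~\ref{lemma:hneqfg} and~\ref{lemma:hfg} with Corollary~\ref{cor:fo}, sorting the outcome of Lemma~\ref{lemma:hneqfg}(i) into the centralized and inverted factors to obtain case (iii). That part is correct bookkeeping and matches the paper.

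The gap is in your converse, precisely at the point you yourself flag as ``the hard part.'' For case (iii) you do not give an argument: the claim that the inversion action forces ``an essentially canonical presentation'' of every 2-generated subgroup, from which a single global extending bijection $\Psi$ can be assembled ``componentwise,'' is exactly the content that would need to be proved, and it is delicate -- a 2-generated subgroup of $(M_1\times\dots\times M_s)\rtimes C_{2^n}$ can be generated by elements with mixed $M$- and $Q$-parts, the isomorphism $\varphi$ need not respect the decomposition into normal part and complement, and $\Psi$ has to work simultaneously for every third generator, not one at a time. As written, this is a missing proof rather than a proof; the same remark applies, more mildly, to the unspecified ``brief direct check'' you invoke for case (iv). The paper avoids all of this with a one-line observation you overlooked: every group listed in the statement appears in the Cherlin--Felgner classification of ultrahomogeneous finite groups \cite{CHE91, CHE00}, and ultrahomogeneous groups are in particular 2-tuple regular. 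So the converse needs no new verification at all; citing that classification (or, if you insist on self-containment, actually carrying out the extension-of-isomorphisms argument for $(M_1\times\dots\times M_s)\rtimes C_{2^n}$, which is genuine work) is what closes your gap.
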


\begin{proof}
First assume that $G$ is 2-tuple regular. If $G$ is nilpotent, it is a direct product of its Sylow subgroups. By Theorem~\ref{theo:classificationpgroups}, these are homocyclic or isomorphic to $Q_8$ or $G_{64}$. Now assume that $G$ is not nilpotent. By Lemmas~\ref{lemma:hneqfg} and~\ref{lemma:hfg} together with Corollary~\ref{cor:fo}, we have $G = G_1 \times G_2$, where $G_1$ is an abelian 2-tuple regular group and $G_2$ is isomorphic to one of the groups in (iii) or (iv).
Conversely, note that the above groups are ultrahomogeneous by \cite{CHE91} and hence 2-tuple regular. 
\end{proof}

\section{General case}\label{sec:general}

In this section, we classify the 2-tuple regular finite groups. To this end, we consider the cases that the layer $E(G)$ of~$G$ is trivial (Section~\ref{sec:triviallayer}) or nontrivial (Section~\ref{sec:nontriviallayer}) individually. In Section~\ref{sec:mainproofs}, we summarize the results of this paper by proving Theorems~\ref{theo:a} and~\ref{theo:b}. 

\subsection{Trivial layer}\label{sec:triviallayer}

In this section, we study 2-tuple regular finite groups $G$ with $E(G) = 1$. In analogy to \cite[Proposition 1]{CHE00}, we prove that these groups are solvable. Again, the proof given therein is not transferable to the 2-tuple regular setting. Therefore, we take a different approach using the classification of the transitive linear groups:

\begin{theorem}[{\cite[Chapter 5]{HER74}}]\label{theo:hering}
Let $p \in \BP$ and consider an $\F_p$-vector space $V$ of dimension $n \in \N$. Let~$A$ be a subgroup of $\GL(V)$ that acts transitively on the set of nonzero vectors of $V$. Let $L$ be a subfield of $\operatorname{Hom}(V,V)$ containing the identity and being maximal with respect to $a L a^{-1} = L$ for all $ a \in A$. Set $p^m \coloneqq |L|$ and let $n^\ast \in \N$ with $n = n^\ast m$. We write $V_L$ for $V$ viewed as an $L$-vector space of dimension $n^\ast$. Then one of the following holds:  	

\begin{enumerate}[(a)]
	\item We have $\SL(V_L) \trianglelefteq A \subseteq \operatorname{\Gamma L}(V_L)$. 
	\item There exists a non-degenerate symplectic form on $V_L$, and the corresponding symplectic group $\operatorname{Sp}(V_L)$ is normal in $A$.
	\item We have $n^\ast = 6$, $p = 2$ and $A$ contains a normal subgroup isomorphic to the Chevalley group $G_2(2^m)$ for $m \geq 2$.
	\item The group $A$ contains a normal subgroup $E$ isomorphic to an extraspecial group of order $2^{n+1}$. Furthermore, we have $C_A(E) = Z(A)$ and $A/E Z(A)$ is faithfully represented on $E/Z(E)$. If $n = 2$, we have $n^\ast =  2$ and $|L| \in \{3,5,7,11,23\}$. If $n > 2$, then $n^\ast = n = 4$ and $|L| = 3$ holds.
	\item One of the following exceptional cases arises: 
	\begin{enumerate}[(E1)]
	\item $A^{(\infty)} \cong \SL(2,5)$, $n^\ast = 2$ and $|L| \in \{9,11,19,29,59\}$, where $A^{(\infty)}$ denotes the last term of the derived series of $A$.
	\item $A \cong A_6$, $n^\ast = 4$ and $|L| = 2$. 
	\item $A \cong A_7$, $n^\ast = 4$ and $|L| = 2$. 
	\item $A \cong \SL(2,13)$, $n^\ast = 6$ and $|L| = 3$. 
	\item $A \cong \PSU(3,3)$, $n^\ast = 6$ and $|L| = 2$. 
	\end{enumerate}

		\end{enumerate}
\end{theorem}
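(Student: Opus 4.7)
The statement is Hering's classification of transitive linear groups, attributed to \cite[Chapter 5]{HER74}, so the ``proof'' that will appear in the paper is presumably a pointer to that reference rather than an independent derivation. My plan, were I to reprove it, would proceed as follows.

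First I would exploit the transitivity of $A$ on $V \setminus \{0\}$ to extract combinatorial constraints: $p^n - 1$ must divide $|A|$, and a point stabiliser is a large proper subgroup of index exactly $p^n - 1$. Next I would study a minimal normal subgroup $N$ of $A$. If $N$ is abelian, Schur's lemma together with the maximality of $L$ either forces $N$ into $L^\times$ (so that passing to the $L$-structure absorbs $N$ into the field) or produces an extraspecial subgroup of order $2^{n+1}$ on which $A/EZ(A)$ acts faithfully; this yields case~(d). If $N$ is nonabelian, Clifford theory plus the transitivity hypothesis forces the socle to be quasisimple and absolutely irreducible on $V_L$, and $A$ then sits between $N$ and its normaliser in $\operatorname{\Gamma L}(V_L)$.

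The main work is to classify which quasisimple groups admit a faithful irreducible representation of small degree over a prime-power field such that the normaliser is transitive on nonzero vectors. The generic families $\SL(V_L)$, $\operatorname{Sp}(V_L)$, and $G_2(2^m)$ account for cases~(a)--(c) via their natural modules: the classical orbit structure together with the scalar action of $L^\times$ gives transitivity. The remaining quasisimple candidates are then scanned against the known list of faithful irreducibles of small degree, together with the divisibility $p^n - 1 \mid |A|$, to locate the exceptional embeddings of~(e).

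The hard part is exactly list~(e): establishing that $\SL(2,5)$, $A_6$, $A_7$, $\SL(2,13)$, and $\PSU(3,3)$ produce the only further transitive actions, in precisely the listed dimensions and field sizes, and excluding all other quasisimple candidates. This is delicate character-theoretic bookkeeping that occupies a substantial portion of Hering's original paper, so the honest plan for the present work is to quote \cite{HER74} as a black box.
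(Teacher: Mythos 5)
Your reading is exactly right: the paper states this result as a quoted theorem, attributed to \cite[Chapter 5]{HER74}, and gives no proof of its own, so citing Hering as a black box is precisely what the paper does. Your sketch of how the classification is actually obtained (transitivity divisibility constraints, minimal normal subgroup dichotomy, extraspecial versus quasisimple socle, and the case analysis yielding the exceptional list) is a fair summary of the original argument, but none of it is reproduced or needed in the paper itself.
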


\begin{Remark}\label{rem:solvablequotient}
Let $G$ be a 2-tuple regular finite group with $E(G) = 1$. Then \cite[Theorem~6.5.8]{KUR98} yields $C_G(F(G)) = Z(F(G))$. Since this group is solvable, $G$ is solvable if and only if $G/C_G(F(G))$ is solvable. In order to show the latter, it suffices to show that that $G/C_G(\op)$ is solvable for every $p \in \BP$. The claim then follows since $G/C_G(F(G))$ is isomorphic to a subgroup of $\prod_{p \in \BP} G/C_G(\op)$.  
\end{Remark}

The main step is the following:

\begin{lemma}\label{lemma:triviallayer}
	Let $G$ be a 2-tuple regular finite group with $E(G) = 1$. For $p \in \BP$, the group $G/C_G(\op)$ is solvable unless $p = 5$, $O_5(G) \cong C_5^2$ and $G/C_G(O_5(G)) \cong \SL(2,5)$. 
\end{lemma}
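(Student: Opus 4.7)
The plan is to apply Hering's theorem (Theorem~\ref{theo:hering}) to the action of $\bar G := G/C_G(\op)$ on $V := \Omega(\op)$. By Theorem~\ref{theo:classificationpgroups} together with Lemma~\ref{lemma:orderinn}, the 2-tuple regular group $\op$ is homocyclic, or $p = 2$ and $\op \in \{Q_8, G_{64}\}$. The two 2-group possibilities can be handled separately using the small and structurally restricted automorphism groups $\Aut(Q_8) \cong S_4$ (solvable, so nothing to prove) and $\Aut(G_{64})$ (which must be combined with Lemma~\ref{lemma:o2gpodd} and the fact that the only non-centralizing 2-tuple regular extension of $G_{64}$ is $G_{192}$). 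So assume $\op$ is homocyclic. Then $V$ is naturally an $\F_p$-vector space, and since the kernel of $\Aut(\op) \to \Aut(V)$ is a $p$-group, solvability of $\bar G$ is equivalent to solvability of the induced image $\bar A \leq \GL(V)$.

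Next I would dichotomize on the orbit structure of $\bar A$ on $V \setminus \{0\}$. Using Lemma~\ref{lemma:commutators2tupleregular} on pairs $x, y \in V \setminus \{0\}$ with $\langle x \rangle \cap \langle y \rangle = 1$, as in the proof of Lemma~\ref{lemma:2groupacting}, either no such pair is $G$-conjugate --- so $\bar A$ acts on $V$ by power automorphisms and hence embeds in $\F_p^* \cong C_{p-1}$, which is cyclic and in particular solvable --- or some such pair is $G$-conjugate, and Lemma~\ref{lemma:commutators2tupleregular} propagates conjugacy to all of $V \setminus \{0\}$, giving $\bar A$ transitive on the nonzero vectors. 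In the latter case Hering's theorem supplies a short list of candidates for $\bar A$, and the remainder of the proof is to eliminate all nonsolvable ones except the stated exception.

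For each nonsolvable candidate the strategy is uniform: the nonabelian simple composition factor $L$ of $\bar A$ pulls back to a subnormal perfect section of $G$, and since $C_G(F(G)) = Z(F(G))$ is solvable by Remark~\ref{rem:solvablequotient}, such a section would force a nontrivial component of $G$, contradicting $E(G) = 1$, unless it is killed by the 2-tuple regularity constraints. The available tools for the obstructions are: all conjugacy classes of same-order elements of $G$ have equal size (Remark~\ref{rem:centralizer}); the Sylow 2-subgroup of $G$ is severely restricted by Theorem~\ref{theo:classificationpgroups}; the element orders arising in the putative semidirect product $V \rtimes \bar A$ are constrained by the homocyclicity of $\fo$ and by Lemma~\ref{lemma:oqfo}; and a count of involutions in candidate simple groups against the Sylow 2-structure of $G$ produces sharp contradictions. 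Case~(a) of Theorem~\ref{theo:hering} with $n^\ast = 1$ gives $\bar A \leq \operatorname{\Gamma L}(1, p^n)$, which is solvable, so only $n^\ast \geq 2$ needs attention.

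The main obstacle is the case-by-case elimination of Hering cases (b), (c), (d), (E2)--(E5), and the non-exceptional parameter choices in (a) and (E1). I expect each to fall to a short counting or element-order argument of the kind just described, but the case $n^\ast = 2$, $p = 5$, $m = 1$ of (a) --- where $\SL(V_L) = \SL(2,5)$ acts on $V \cong \F_5^2$ --- resists all such obstructions and produces precisely the exception recorded in the statement: $p = 5$, $O_5(G) \cong C_5^2$, and $G/C_G(O_5(G)) \cong \SL(2,5)$. A short complementary argument is needed to narrow $\bar A$ in this surviving case from the a priori possibility $\GL(2,5)$ down to $\SL(2,5)$, presumably by observing that a diagonal element of $\GL(2,5)$ of order $4$ would, together with 2-tuple regularity, force a 2-element of $G$ incompatible with Theorem~\ref{theo:classificationpgroups} applied to the Sylow 2-subgroup of $G$.
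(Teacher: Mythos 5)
Your skeleton coincides with the paper's (reduce to abelian homocyclic $\op$, pass to the induced action on $\Omega(\op)$ modulo a $p$-group kernel, split into the power-automorphism case versus the transitive case, then invoke Theorem~\ref{theo:hering}), but the elimination of the nonsolvable Hering candidates -- which is where essentially all the work lies -- rests on a mechanism that is false. A nonabelian simple composition factor of $\bar{A}$ does \emph{not} force a nontrivial component of $G$: groups such as $C_2^4 \rtimes A_7$ (case (E3)), $C_2^4 \rtimes A_6$ (E2), $\F_3^6 \rtimes \SL(2,13)$ (E4), or $V \rtimes \operatorname{Sp}(V_L)$ in case (b) all have trivial layer and satisfy $C_G(F(G)) = Z(F(G))$; indeed the surviving exception $C_5^2 \rtimes \SL(2,5)$ is itself such a group, which is exactly why the lemma has an exceptional case. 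So no Hering case can be dismissed structurally from $E(G)=1$; each must be killed by a concrete 2-tuple-regularity constraint. The paper's key device, which your proposal does not isolate, is the observation that two elements of $A$ of the same order lift (via Lemma~\ref{lemma:preimageorder}) to same-order elements of $G$, and an extending bijection then forces them to have the same eigenvalues with multiplicities (and, with a chosen basis, the same characteristic polynomial). This disposes of (a)/(b) with $n^\ast > 2$ and with $|L| \geq 7$, and of (E2)/(E3) (order-3 elements of $A_6, A_7 \leq \GL(4,2)$ with and without eigenvalue $1$); (c) and (E5) fall to comparing conjugacy class sizes via Lemma~\ref{lemma:centralizersq}; (E1)/(E4) fall to counting elements of order $3$ and applying Lemma~\ref{lemma:commutators2tupleregular} to pairs spanning $\Omega_L$. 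Your list of ``available tools'' does not contain these arguments, and ``I expect each to fall to a short counting argument'' leaves the core of the proof undone.

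Two further pieces of the exceptional case are also missing. First, narrowing $A$ from a transitive subgroup of $\GL(2,5)$ containing $\SL(2,5)$ down to $\SL(2,5)$: the paper does this by noting that the eigenvalue observation forces $A$ to contain a unique involution; your sketch via an order-4 diagonal element clashing with Theorem~\ref{theo:classificationpgroups} is not worked out and aims at the wrong constraint. Second, showing $O_5(G) \cong C_5^2$ rather than a homocyclic group of exponent $\geq 25$ restricting to the same action on $\Omega$: the paper rules this out by exhibiting pairs generating $C_{25}^2$ that are conjugate by an element of order $3$ and pairs that are not, contradicting Lemmas~\ref{lemma:preimageorder} and~\ref{lemma:commutators2tupleregular}. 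Finally, your treatment of $O_2(G) \cong G_{64}$ via Lemma~\ref{lemma:o2gpodd} is misdirected: that lemma concerns odd-order complements $O_2(G) \rtimes P$, not solvability of $G/C_G(O_2(G))$; the relevant (and simple) fact is that $\Aut(Q_8)$ and $\Aut(G_{64})$ are solvable, as the paper cites.
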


\begin{proof}
Fix $p \in \BP$, and suppose that $B \coloneqq G/C_G(\op)$ is non-solvable. By Lemma~\ref{lemma:oqfo}, the group $\op$ is 2-tuple regular. If $\op$ is non-abelian, we have $p = 2$ and $O_2(G) \in \{Q_8, G_{64}\}$ (see Theorem~\ref{theo:classificationpgroups}). Then $B$ is a subgroup of $\Aut(O_2(G))$ and hence solvable (see \cite[Lemma 4]{CHE00}). In the following, we therefore assume that $\op$ is abelian.
\medskip

In the following, we view $B$ as a subgroup of $\Aut(\op)$. Set $\Omega \coloneqq \Omega(\op)$. The kernel of the restriction map $B \to B|_{\Omega}$ is a $p$-group by \cite[Theorem 5.2.4]{GOR68} and hence solvable. 
Thus it remains to study the solvability of $A \coloneqq B|_{\Omega}$. By 2-tuple regularity, either all nontrivial elements in $\Omega$ are conjugate in $G$, or we have $[\omega]_G \subseteq \langle \omega \rangle$ for all $\omega \in \Omega$. In the latter case, $A$ is a group of power automorphisms and hence solvable. 
From now on, we assume that all nontrivial elements in $\Omega$ are conjugate in $G$.
Since $A$ can be viewed as a transitive subgroup of $\GL(\Omega)$, it is isomorphic to one of the groups listed in Theorem~\ref{theo:hering}.
\medskip

\emph{Observation:} Let $a_1, a_2 \in A$ with $\ord(a_1) = \ord(a_2)$. By Lemma~\ref{lemma:preimageorder}, we find preimages $g_1, g_2 \in G$ with $\ord(g_1) = \ord(g_2)$ of $a_1$ and $a_2$, respectively. Consider the assignment $g_1 \mapsto g_2$ and let $\Psi \colon G \to G$ be an extending bijection. If $g_1 x g_1^{-1} = x^\lambda$ holds for some $\lambda \in \Z$, we have $g_2 \Psi(x) g_2^{-1} = \Psi(x)^\lambda$, and vice versa. Interpreting $a_1$ and $a_2$ as elements of $\GL(\Omega)$, this means that $\Psi$ induces a bijection between the sets of eigenvectors of $a_1$ and $a_2$. In particular, $a_1$ and $a_2$ have the same eigenvalues with the same multiplicities in~$\F_p$.
%
%
%
\medskip

We now go through the list in Theorem~\ref{theo:hering}. Again, we write $\Omega_L$ for $\Omega$, viewed as an $L$-vector space. 
\medskip

\emph{Cases (a) and (b):} For $n^\ast = 1$, the group $A$ is solvable. For $n^\ast > 2$, there exists an involution in $A$ that acts as a power automorphism on $\Omega_L$ and others that do not, which contradicts the above observation. In the following, let $n^\ast = 2$, and let $N \trianglelefteq A$ be isomorphic to $\SL(\Omega_L) \cong \operatorname{Sp}(\Omega_L)$ (see \cite[Theorem II.9.12]{HUP67}). 
\medskip

First let $|L| \geq 7$. There exist elements $a,b \in N$ of order $|L|+1$ with distinct characteristic polynomials (when interpreted as elements of $\GL(\Omega_L)$). Let $g_a, g_b  \in G$ be preimages of $a$ and $b$ with $\ord(g_a) = \ord(g_b)$, respectively (see Lemma~\ref{lemma:preimageorder}). Let $x,y \in \Omega$ such that $\{x,y\}$ is an $L$-basis for $\Omega_L$. Let $x' \in \Omega$ such that $g_a \mapsto g_b$, $x \mapsto x'$ defines an isomorphism between $\langle g_a, x \rangle$ and $\langle g_b , x' \rangle$ and let $\Psi \colon G \to G$ be an extending bijection. Then the matrix describing the action of~$b$ on $\Omega_L$ with respect to the basis $\{x',\Psi(y)\}$ coincides with the matrix representing the action of $a$ on $\Omega_L$ with respect to the basis $\{x,y\}$. In particular, $a$ and $b$ have the same characteristic polynomial, which is a contradiction. 
\medskip

Now let $|L| = 5$. Then the involution in~$N$ acts on $\Omega$ by inversion. 
By the above observation on eigenspaces of elements in $A$, the group $A$ contains a single involution. Since $A \subseteq \operatorname{\Gamma L}(2,5) = \GL(2,5)$ holds and we have $|\GL(2,5) : \SL(2,5)| = 4$, we obtain $A = N \cong \SL(2,5)$. 
Note that $A$ contains precisely $20$ elements of order~3. If $O_5(G) \neq \Omega$ holds, we find pairs $(x,y), (x',y') \in O_5(G)^2$ satisfying $\langle x,y \rangle \cong \langle x', y' \rangle \cong C_{25}^2$ such that $x$ and $y$ are conjugate by an element of order $3$, whereas $x'$ and $y'$ are not. By Lemmas~\ref{lemma:preimageorder} and~\ref{lemma:commutators2tupleregular}, this is a contradiction. Hence $O_5(G) \cong C_5^2$ follows.
\medskip

\emph{Case (c):} Let $N \trianglelefteq A$ be isomorphic to $G_2(2^m)$. Then $N$ contains two conjugacy classes of involutions of different size (see \cite[Theorem 18.2]{ASC76}). As these classes are not fused in $A$, we obtain a contradiction to Lemma~\ref{lemma:centralizersq}. 
\medskip

\emph{Case (d):} These groups are solvable. 
\medskip

\emph{Case (e):} First consider the case (E1). Assume that $x \in A \setminus A^{(\infty)}$ is an element of order 3. By 2-tuple regularity, there exists $y \in A$ such that $X \coloneqq \langle x, y\rangle$ is isomorphic to $\SL(2,5)$. Since $X$ is perfect, we find $X \subseteq A^{(\infty)}$, which is a contradiction. Hence $A$ contains precisely $20$ elements of order~3. Due to $|\Omega| \geq 81$, there exist pairs $(x,y), (x',y') \in \Omega_L^2$ spanning $\Omega_L$ such that $x$ and $y$ are conjugate by an element of order~3 in $A$, whereas $x'$ and $y'$ are not. By Lemmas~\ref{lemma:preimageorder} and~\ref{lemma:commutators2tupleregular}, this is a contradiction. Similarly, we argue in the case (E4). 
For the case (E3), note first that all subgroups of $\GL(4,2)$ isomorphic to $A_7$ are conjugate. In these subgroups, some elements of order 3 have eigenvalue 1, whereas others do not. This contradicts the observation made above. A similar argument applies in the case (E2). In case (E5), we use that $\PSU(3,3)$ contains conjugacy classes of elements of order 3 of different sizes, which is a contradiction to Lemma~\ref{lemma:centralizersq}.
\medskip

Summarizing, this shows that $A$ is solvable unless $p = 5$, $O_5(G) \cong C_5^2$ and $G/C_G(O_5(G)) \cong \SL(2,5)$ holds. This finishes the proof.
\end{proof}

Having proven this result, the remaining part of the proof can be carried out as in \cite{CHE00}. For convenience of the reader, we summarize their arguments here. 
\begin{lemma}\label{lemma:cherlin7}
Let $G$ be a 2-tuple regular finite group with $E(G) = 1$ for which $C_G(O_5(G)) \subseteq O_5(G) Z(G)$ holds. Then $G$ is solvable.
\end{lemma}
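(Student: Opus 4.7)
The plan is to argue by contradiction: assuming $G$ is non-solvable, I will use Lemma \ref{lemma:triviallayer} together with Remark \ref{rem:solvablequotient} to reduce to a single exceptional configuration, and then exploit the hypothesis to derive a contradiction via $2$-tuple regularity.

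First, if $G$ is non-solvable, then by Remark \ref{rem:solvablequotient} some $G/C_G(O_p(G))$ is non-solvable, and Lemma \ref{lemma:triviallayer} forces the unique exceptional case $p = 5$, $O_5(G) \cong C_5^2$, and $G/C \cong \SL(2,5)$, where $C := C_G(O_5(G))$. The hypothesis $C \subseteq O_5(G) Z(G)$ gives that $C/O_5(G)$ sits inside the image of $Z(G)$ in $G/O_5(G)$, hence is central there. Combining this with $F(G) \subseteq C$ (recall $F(G)$ is abelian by Lemma \ref{lemma:oqfo} and Theorem \ref{theo:classificationpgroups}), I would deduce $[G, O_p(G)] \subseteq O_p(G) \cap O_5(G) = 1$ for every prime $p \neq 5$, so each such $O_p(G)$ lies in $Z(G)$. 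Setting $A := \prod_{p \neq 5} O_p(G) \subseteq Z(G)$ and replacing $G$ by the still $2$-tuple regular quotient $G/A$ (using $A \in \ug$ and Lemma \ref{lemma:orderinn}; the hypothesis and $E(G/A) = 1$ persist under this reduction), I may assume $F(G) = O_5(G) \cong C_5^2$, $Z(G) = 1$, and that $G/O_5(G) \cong \SL(2,5)$ acts faithfully on $O_5(G)$ as the natural module.

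The final contradiction comes from comparing conjugacy class sizes of elements of order $5$ via Remark \ref{rem:centralizer}(i). For any nonzero $a \in O_5(G)$, the $\SL(2,5)$-stabilizer of $a$ is the upper unipotent subgroup of order $5$, so $|C_G(a)| = 125$ and $|[a]_G| = 24$. On the other hand, Lemma \ref{lemma:preimageorder} provides an element $u \in G$ of order $5$ whose image in $\SL(2,5)$ is a nontrivial $5$-element (after checking that the extension of $\SL(2,5)$ by $O_5(G)$ splits, so that $\SL(2,5)$ embeds in $G$ and genuine order-$5$ lifts exist): the $u$-fixed subspace in $O_5(G)$ is one-dimensional and its $\SL(2,5)$-centralizer is $C_{10}$, so $|C_G(u)| = 50$ and $|[u]_G| = 60 \neq 24$. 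This violates $1$-tuple regularity, proving that $G$ must be solvable.

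The main obstacle is guaranteeing the existence of the order-$5$ lift $u$, i.e.\ that the extension of $\SL(2,5)$ by the natural module splits (in a hypothetical non-split extension, every element of $G \setminus O_5(G)$ mapping to an order-$5$ element of $\SL(2,5)$ would have order $25$). Following \cite{CHE00}, this is handled either by invoking the vanishing of $H^2(\SL(2,5), \F_5^2)$ or by substituting an analogous class-size contradiction at order $25$, at which point every element outside $O_5(G)$ whose image has order $5$ yields a comparison contradicting Remark \ref{rem:centralizer}.
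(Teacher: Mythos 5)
Your reduction to the exceptional case ($O_5(G)\cong C_5^2$, $G/C_G(O_5(G))\cong\SL(2,5)$) and the observation that $O_p(G)\subseteq Z(G)$ for $p\neq 5$ are fine, but the next step is a genuine gap: you replace $G$ by $G/A$ with $A=\prod_{p\neq 5}O_p(G)$ and declare this quotient ``still $2$-tuple regular'', citing $A\in\mathcal{U}(G)$ and Lemma~\ref{lemma:orderinn}. That lemma is about \emph{subgroups} in $\mathcal{U}(G)$; the paper explicitly warns that quotients of $\ell$-tuple regular groups need not be $\ell$-tuple regular, and the only quotient information it establishes is the weak statement of Lemma~\ref{lemma:preimageorder}. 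Your final contradiction (class sizes $24\neq 60$, or more directly an order-$5$ element outside $O_5$) is carried out inside $G/A$ and therefore needs at least $1$-tuple regularity of $G/A$, which you have not established. Transferring the contradiction back to $G$ runs exactly into the obstacle you flag yourself: the splitting coming from $H^2(\SL(2,5),\F_5^2)=0$ (which is indeed true, since $-I$ acts fixed-point-freely on the module) lives in the quotient, and a lift to $G$ of an order-$5$ element may have order $25$, in which case there is no clash with $O_5(G)\in\mathcal{U}(G)$. The promised ``analogous class-size contradiction at order $25$'' is not carried out and is not routine: in the scenario where every element above a unipotent has order $25$, a Sylow $5$-subgroup of $G$ is extraspecial of exponent $25$, and counting fifth roots of the nontrivial elements of $O_5(G)$ gives a uniform answer, so Remark~\ref{rem:centralizer} alone does not finish the job. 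A repair along your lines is possible but requires an argument you did not give: since $C_G(O_5(G))=O_5(G)\times Z(G)$ with $Z(G)$ a $5'$-group, the vanishing of $H^2(\SL(2,5),O_5(G))$ yields a subgroup $T\leq G$ with $T\cap O_5(G)=1$ mapping onto $\SL(2,5)$, whose elements of order $5$ contradict $O_5(G)\in\mathcal{U}(G)$ directly in $G$ -- no quotient regularity needed.

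For comparison, the paper sidesteps both the quotient and the splitting issue: it takes a $2$-element $a$ acting on $O_5(G)$ by inversion, shows $G=O_5(G)\times C_G(a)$ as in Cherlin--Felgner's Lemma~7 (this is where the hypothesis $C_G(O_5(G))\subseteq O_5(G)Z(G)$ enters), and then the element of order $5$ in $C_G(a)$ lies outside $O_5(G)$, contradicting $O_5(G)\in\mathcal{U}(G)$ from Lemma~\ref{lemma:oqfo}.
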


\begin{proof}
Assume that $G$ is not solvable. By Lemma~\ref{lemma:triviallayer}, we have $O_5(G) \cong C_5^2$ and $G/C_G(O_5(G)) \cong \SL(2,5)$. Let $a \in G$ be a 2-element that acts on $O_5(G)$ by inversion. As in the proof of \cite[Lemma 7]{CHE00}, one shows that $G = O_5(G) \times C_G(a)$ holds. Since $|G/C_G(O_5(G))|$ is divisible by $5$, the group $C_G(a)$ contains an element of order 5, which is a contradiction to Lemma~\ref{lemma:oqfo}. 
\end{proof}

With this result, we can now prove the main result of this subsection:

\begin{theorem}\label{theo:triviallayer}
Let $G$ be a 2-tuple regular finite group with $E(G) = 1$. Then $G$ is solvable. 
\end{theorem}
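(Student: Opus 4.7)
The plan is to deduce the theorem by combining Remark~\ref{rem:solvablequotient}, Lemma~\ref{lemma:triviallayer}, and Lemma~\ref{lemma:cherlin7}, together with an induction on $|G|$. By Remark~\ref{rem:solvablequotient}, since $E(G) = 1$ forces $C_G(F(G)) = Z(F(G))$ to be solvable, it suffices to show that $G/C_G(O_p(G))$ is solvable for every $p \in \BP$. Lemma~\ref{lemma:triviallayer} delivers this in all cases except the single exceptional configuration in which $p = 5$, $O_5(G) \cong C_5^2$, and $G/C_G(O_5(G)) \cong \SL(2,5)$. So the theorem reduces to excluding, or otherwise handling, this configuration.

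I would argue by minimal counterexample on $|G|$ and set $C := C_G(O_5(G))$. By Lemma~\ref{lemma:oqfo}(i), we have $O_5(G) \in \ug$, and then Lemma~\ref{lemma:centralizerunionorderclasses} yields $C \in \ug$; hence $C$ is $2$-tuple regular by Lemma~\ref{lemma:orderinn}. Since $C \trianglelefteq G$, any component of $C$ is subnormal in $G$ and therefore a component of $G$, so $E(C) \leq E(G) = 1$. The hypothesis $G/C \cong \SL(2,5) \neq 1$ forces $C \lneq G$, and minimality then gives that $C$ is solvable.

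The remaining (and main) task is to verify the hypothesis $C \subseteq O_5(G) Z(G)$ of Lemma~\ref{lemma:cherlin7}. For this, I would apply the classification in Theorem~\ref{theo:solvable} to the solvable $2$-tuple regular group $C$, writing $C = A \times B$ with $A$ abelian of homocyclic Sylow subgroups and $B$ from the explicit list. Since $O_5(G) \leq C$ has order $25$, the groups in Theorem~\ref{theo:solvable}(i) and~(iv) (all of order coprime to $5$) cannot accommodate $O_5(G)$, while in case~(iii) the cyclic $2$-group $C_{2^n}$ acts by inversion on the odd-order factor and so cannot centralize $O_5(G)$. A case analysis then shows that the Sylow $5$-subgroup of $C$ is precisely $O_5(G)$ and that the remaining direct factors of $C$ centralize enough of $G$ to lie in $Z(G)$; here one uses the irreducibility of the $\SL(2,5)$-action on $O_5(G) \cong C_5^2$ and the fact that $G$ permutes Sylow subgroups of $C$ among themselves. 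With the hypothesis verified, Lemma~\ref{lemma:cherlin7} furnishes the contradiction that $G$ is solvable, completing the proof.

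The hard part will be precisely this verification of $C_G(O_5(G)) \subseteq O_5(G) Z(G)$: all preceding lemmas hand us solvability of $C$ and a clean structural decomposition, but pinning down that every noncentral element of $C$ outside $O_5(G)$ would be moved by the $\SL(2,5)$-quotient, in a way compatible with $2$-tuple regularity, requires the combined use of Theorem~\ref{theo:solvable} and the conjugacy/eigenvalue bookkeeping developed in Lemma~\ref{lemma:triviallayer}.
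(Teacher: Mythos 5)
Your reduction via Remark~\ref{rem:solvablequotient} and Lemma~\ref{lemma:triviallayer} to the exceptional configuration ($p=5$, $O_5(G)\cong C_5^2$, $G/C_G(O_5(G))\cong \SL(2,5)$) is fine, and so are the observations that $K \coloneqq C_G(O_5(G))$ lies in $\ug$, is $2$-tuple regular, has $E(K)=1$ and is solvable (in fact no induction on $|G|$ is needed here: $O_5(K)=O_5(G)\subseteq Z(K)$, so the exceptional case of Lemma~\ref{lemma:triviallayer} cannot arise for $K$ and Remark~\ref{rem:solvablequotient} gives solvability directly). The genuine gap is the step you explicitly postpone: you never prove $C_G(O_5(G)) \subseteq O_5(G)Z(G)$, and this containment cannot be verified head-on the way you suggest. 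Writing $K = O_5(G)\times H$ with $H$ a $5'$-group (Theorem~\ref{theo:solvable}), the hypothesis of Lemma~\ref{lemma:cherlin7} for $G$ amounts to $H \subseteq Z(G)$; a priori $H$ could be nonabelian (with $Q_8$-, $G_{64}$- or $A_4$-type factors allowed by Theorem~\ref{theo:solvable}), or abelian but acted on nontrivially by $G$, and neither the irreducibility of the $\SL(2,5)$-action on $O_5(G)$ nor the fact that $G$ permutes the Sylow subgroups of $K$ controls how $G$ acts on $H$. Ruling out exactly these possibilities is the substance of the theorem, so as written your argument is circular at its crux.

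The paper avoids this by never applying Lemma~\ref{lemma:cherlin7} to $G$ itself. It applies it to $M \coloneqq C_G(H)$, where the hypothesis holds for free: $C_M(O_5(M)) = K\cap M = O_5(G)\times Z(H) \subseteq O_5(M)Z(M)$ since $M$ centralizes $H$, and $M\in\ug$ (Lemma~\ref{lemma:centralizerunionorderclasses}) with $E(M)=1$. Solvability of $G$ is then assembled along the chain $M \leq C_G(F(H)) \leq G$: the quotient $G/C_G(F(H))$ is solvable by reducing, as in Remark~\ref{rem:solvablequotient}, to the groups $G/C_G(O_p(H))$, which for $p\neq 5$ are quotients of $G/C_G(O_p(G))$ and hence solvable by Lemma~\ref{lemma:triviallayer} (the case $p=5$ is vacuous since $O_5(H)=1$); the quotient $C_G(F(H))/C_G(H)$ is solvable because $\Aut(H/F(H))$ is solvable by Theorem~\ref{theo:solvable}; and $M$ is solvable by Lemma~\ref{lemma:cherlin7}. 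To repair your proposal, replace the unproved containment for $G$ by this decomposition; your minimal-counterexample framing then becomes unnecessary.
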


\begin{proof}
By Lemmas~\ref{lemma:centralizerunionorderclasses} and~\ref{lemma:oqfo}, the group $K \coloneqq C_G(O_5(G))$ is 2-tuple regular. Due to $E(K) = 1$ and $O_5(K) \subseteq Z(K)$, the group $K$ is solvable by Remark~\ref{rem:solvablequotient} and Lemma~\ref{lemma:triviallayer}. By Theorem~\ref{theo:solvable}, we have $K = O_5(G) \times H$ for some $5'$-group $H$. By Corollary~\ref{cor:directproduct}, $H$ is 2-tuple regular. We first show that $G/C_G(F(H))$ is solvable. As in Remark~\ref{rem:solvablequotient}, it suffices to show that $G/C_G(O_p(H))$ is solvable for all $p \in \BP$. For $p = 5$, this is clear due to $O_5(H) = 1$. For $p \neq 5$, the group $G/C_G(O_p(H))$ is isomorphic to a quotient group of $G/C_G(O_p(G))$ and hence solvable by Lemma~\ref{lemma:triviallayer}. By Theorem~\ref{theo:solvable}, the group $\Aut(H/F(H))$ is solvable, which implies that $C_G(F(H))/C_G(H)$ is solvable. Thus it remains to show the solvability of $M \coloneqq C_G(H)$. Note that $M \in \ug$ follows by Lemma~\ref{lemma:centralizerunionorderclasses} and hence $M$ is 2-tuple regular. Moreover, we have $C_M(O_5(M)) = C_G(O_5(G)) \cap M = K \cap M = O_5(G) \times Z(H) \subseteq O_5(M) Z(M)$. By Lemma~\ref{lemma:cherlin7}, the group $M$ is solvable.   
\end{proof}

\subsection{Nontrivial layer}\label{sec:nontriviallayer}

Throughout this subsection, let $G$ be a $2$-tuple regular finite group with $E(G) \neq 1$. Our first aim is to show that $E(G)$ is quasisimple. To this end, we need the following observation on conjugate commuting involutions:

\begin{lemma}\label{lemma:simple}\label{lemma:quasisimple}
Let $S$ be a finite quasisimple group that is neither isomorphic to $\SL(2,q)$, where $q$ is a prime power, nor to~$2.A_7$ or $6.A_7$. Then $S$ contains a pair of conjugate commuting involutions. 
\end{lemma}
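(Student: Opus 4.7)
My plan is to reduce the existence of conjugate commuting involutions to the existence of a non-central involution in $S$, and then invoke a classical classification to identify the exceptions. The key tool will be Glauberman's $Z^*$-theorem: in a finite group $G$, an involution $t$ fails to be conjugate to any involution of $C_G(t) \setminus \{t\}$ if and only if $t$ lies in $Z^*(G)$, the preimage of $Z(G/O_{2'}(G))$. First I would establish that $Z^*(S) = Z(S)$ for our quasisimple $S$. This follows because $S/Z(S)$ is nonabelian simple, so any normal odd-order subgroup of $S$ must lie in $Z(S)$, forcing $O_{2'}(S) \subseteq Z(S)$; passing to the quotient then identifies the preimage of $Z(S/O_{2'}(S))$ with $Z(S)$. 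Once this is in place, the lemma becomes equivalent to exhibiting an involution of $S$ outside $Z(S)$.

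Next I would split according to the parity of $|Z(S)|$. When $|Z(S)|$ is odd, every involution of $S$ is automatically non-central; since $|S|$ is even by the Feit--Thompson theorem, the conclusion is immediate. When $|Z(S)|$ is even, let $z$ denote the central involution and let $P$ be a Sylow 2-subgroup of $S$, so that $z \in Z(P)$. A non-central involution is produced as soon as $P$ contains an involution distinct from $z$, so the only remaining obstacle is the case where $P$ has a unique involution, i.e., $P$ is cyclic or generalized quaternion.

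The cyclic alternative is ruled out by Burnside's normal $p$-complement theorem, which would force $S$ to be solvable and contradict quasisimplicity. For the generalized quaternion case, I would invoke the Brauer--Suzuki theorem together with the Gorenstein--Walter classification of nonabelian simple groups with dihedral Sylow 2-subgroup, applied to $S/Z(S)$; this narrows $S/Z(S)$ to $\PSL(2,q)$ for $q$ an odd prime power, or to $A_7$. Inspecting the quasisimple covers of these groups whose Sylow 2-subgroup stays generalized quaternion then identifies $S$ as $\SL(2,q)$ for $q$ odd (in particular $2.A_5 = \SL(2,5)$ and $2.A_6 = \SL(2,9)$), or as one of the covers $2.A_7$, $6.A_7$, matching the excluded list.

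The main obstacle will be the generalized quaternion case: invoking Gorenstein--Walter is a substantial external input, and one must carefully track both the 2-part and the odd part of the Schur multiplier of the simple quotient so that no offending cover escapes the excluded list (the covers of $A_6$ coming from its Schur multiplier $C_6$ are a point where attention is required). A more direct alternative, which I would be prepared to use as a backup, is to apply the classification of finite simple groups to enumerate simple groups with dihedral Sylow 2-subgroups and then compute the relevant central extensions explicitly.
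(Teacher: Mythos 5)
Your first half coincides with the paper's argument: you prove $O_{2'}(S)\subseteq Z(S)$, identify $Z^\ast(S)$ with $Z(S)$, and use Glauberman's $Z^\ast$-theorem to turn any non-central involution into a pair of distinct commuting conjugate involutions. The gap lies in the remaining case, where every involution of $S$ is central. You write ``let $z$ denote the central involution'', but $Z(S)$ need not contain a unique involution: the $2$-part of the centre of a quasisimple group can be non-cyclic (for instance for suitable covers of $\PSL(3,4)$ or of the Suzuki group of order $29120$, whose centres contain $C_2\times C_2$). In that situation an involution of $P$ different from $z$ may still be central, so the genuine obstacle case is ``every involution of $P$ lies in $Z(S)$'', and this does \emph{not} force $P$ to be cyclic or generalized quaternion -- a $2$-group can easily have several involutions, all central (e.g.\ $C_4\times C_4$, or the Suzuki $2$-groups appearing elsewhere in this paper). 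For such $S$ neither Burnside's transfer argument nor Brauer--Suzuki/Gorenstein--Walter applies, and your CFSG backup (simple groups with dihedral Sylow $2$-subgroups) does not reach this case either, since the Sylow $2$-subgroup of $S/Z(S)$ is not known to be dihedral. This is exactly the point where the paper's proof does something different: it handles the whole ``all involutions central'' case in one stroke by citing Griess's classification \cite{GRI78} of quasisimple groups whose involutions all lie in the centre, with no Sylow analysis at all. Your route is genuinely different (and more elementary in the unique-involution case), but as written it does not cover everything that citation covers.

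Your own caveat about the covers of $A_6$ is also more than a bookkeeping issue. The group $3.\SL(2,9)\cong 6.A_6$ is quasisimple, has generalized quaternion Sylow $2$-subgroups, and its unique involution is central; it therefore survives your quaternion analysis, yet it is not isomorphic to any $\SL(2,q)$ nor to $2.A_7$ or $6.A_7$, and having only one involution it contains no pair of distinct conjugate commuting involutions. So the assertion that your enumeration ``matches the excluded list'' is not correct: this cover must either be added to the exceptions or eliminated by an argument you have not supplied. (The same boundary case deserves attention in the paper's own lifting step, where $\bar S = S/O_{2'}(S)\cong \SL(2,9)$ is lifted back to $S$.) In summary: the reduction via the $Z^\ast$-theorem is right and matches the paper, but the second half has a concrete gap in the non-cyclic-centre case and an incorrect terminal enumeration in the quaternion case.
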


\begin{proof}
Let $Z^\ast(S)$ denote the preimage of $Z(S/O_{2'}(S))$ in $S$. First assume that there exists an involution $x \notin Z(S)$. Assume $x \in Z^\ast(S)$, so   $[x,S] \subseteq O_{2'}(S)$. Due to $O_{2'}(S) \subseteq Z(S)$, the element $x Z(S) \in S/Z(S)$ is a nontrivial central element, which is a contradiction to $S$ being quasisimple. Hence we have $x \notin Z^\ast(S)$.  
Let~$P$ be a Sylow $2$-subgroup of $S$ containing~$x$. By Glauberman's $Z^\ast$-theorem, we have $[x] \cap C_P(x) \neq \{x\}$ and the claim follows. Now assume that all involutions of $S$ are contained in $Z(S)$. Then $\bar{S} \coloneqq S/O_{2'}(S)$ is a quasisimple group with $O_{2'}(\bar{S}) = 1$. Due to $O_{2'}(S) \subseteq Z(S)$, all involutions in $\bar{S}$ are central. By \cite[Theorem]{GRI78}, we have $\bar{S} \cong 2.A_7$ or $\bar{S} \cong \SL(2,q)$ for some prime power $q \geq 5$, which implies that $S$ is isomorphic to $2.A_7$, to $6.A_7$ or to $\SL(2,q)$, respectively. This is a contradiction. 
\end{proof}

\begin{lemma}
The group $E(G)$ is quasisimple.
\end{lemma}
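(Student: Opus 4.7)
The plan is to argue by contradiction: assume $E(G) = L_1 L_2 \cdots L_n$ is the central product of components with $n \geq 2$. Distinct components commute elementwise and every quasisimple group has even order (by Feit--Thompson), so one can pick involutions $x_i \in L_i$, and any two of them commute. Whenever $x_1, x_2$ can be chosen outside $L_1 \cap L_2 \subseteq Z(E(G))$, the span $\langle x_1, x_2 \rangle$ is a Klein four-group. I would split on whether some component lies outside the exceptional list $\{\SL(2,q), 2.A_7, 6.A_7\}$ of Lemma~\ref{lemma:simple}.

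In the generic case, Lemma~\ref{lemma:simple} furnishes $L_i$-conjugate commuting involutions $a, b$ inside some $L_i$. Comparing $(a,b)$ with a pair $(x_1, x_2)$ drawn from two distinct components via the isomorphism $a \mapsto x_1$, $b \mapsto x_2$ of the corresponding copies of $V_4$, Lemma~\ref{lemma:commutators2tupleregular} forces $x_1$ and $x_2$ to be $G$-conjugate by an element whose order equals that of the $L_i$-conjugator witnessing $a \sim_G b$. Since every element of $G$ conjugating $x_1$ into a different component must permute the components, this feeds back into constraints on the $G$-orbits on the set of components. Tracking the orders and centralizer sizes that arise via Remark~\ref{rem:centralizer} (applied to involutions of $G$ living in different components), one obtains incompatible centralizer orders — centralizers of $x_1 \in L_1$ contain the full product of the remaining components, while centralizers of $a \in L_i$ are essentially contained in $C_{L_i}(a) \cdot \prod_{j \neq i} L_j$, and these two quantities must agree.

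In the exceptional case, each $L_i$ has a unique involution $z_i$, which is then central in $E(G)$, so $z_1, z_2, z_1 z_2$ are commuting involutions of $G$ sitting in $Z(E(G))$. I would apply Remark~\ref{rem:centralizer}(ii) to compare the number of square roots of $z_1$ and of $z_1 z_2$ in $G$. Because $L_1$ and $L_2$ commute, for $g_i \in L_i$ one has $(g_1 g_2)^2 = g_1^2 g_2^2$; thus square roots of $z_1 z_2$ assemble independently from square roots of $z_1$ in $L_1$ and of $z_2$ in $L_2$, producing a convolution-style count with strictly more solutions than the square roots of $z_1$ alone (which, restricted to $L_1 L_2$, come only from $g_2 \in \{1, z_2\}$ and $g_1^2 = z_1$). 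This discrepancy contradicts Remark~\ref{rem:centralizer}(ii) once the counts from the other components and $C_G(E(G))$ are shown to contribute symmetrically on both sides.

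I expect the main obstacle to be making the square-root count in the exceptional case airtight: one must ensure that contributions from the remaining components $L_3, \ldots, L_n$, from $C_G(E(G))$, and from elements of $G$ not of the form $g_1 g_2$ appear symmetrically in both counts, so that the asymmetry witnessed in the $L_1 L_2$ part survives. The cleanest route is likely to localize the argument to a $G$-invariant subgroup $N \in \mathcal{U}(G)$ — for instance the preimage in $G$ of a Sylow $2$-subgroup of $F(G/O_{2'}(G))$, or $Z(E(G))$ itself — which is automatically $2$-tuple regular by Lemma~\ref{lemma:orderinn}, and to compare square roots only within $N$. A secondary technical point is that the three exceptional quasisimple types themselves contain elements of order $4$ squaring to their unique central involution, and the counts of such elements vary enough across the list that the convolution strictly exceeds the base count, which is what I would verify case by case.
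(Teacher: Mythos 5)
Your overall strategy (assume $n\geq 2$ components, split according to whether some component avoids the exceptional list of Lemma~\ref{lemma:simple}) matches the paper's broad outline, but both of your cases have genuine gaps, and you are missing the one idea the paper's proof actually runs on. The paper first proves an \emph{observation}: if $q$ lies noncentrally in a quasisimple subnormal subgroup $Q$ and $y\in G$ has the same order, then $y$ lies noncentrally in a quasisimple subnormal subgroup isomorphic to $Q$. This is where 2-tuple regularity does its work (choose $x$ with $\langle x,q\rangle=Q$, extend $x\mapsto x'$, $q\mapsto y$, and use the subnormality criterion on $\langle Y,\Psi(g)Y\Psi(g)^{-1}\rangle$). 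The contradiction is then uniform and counting-free: after showing (via Lemma~\ref{lemma:commutators2tupleregular}, essentially as you do) that all components are conjugate and hence contain noncentral elements of the relevant order ($2$ in the generic case, $3$ for the $\SL(2,q)$, $2.A_7$, $6.A_7$ cases), the product $k_1\cdots k_s$ of noncentral such elements, one from each component, has that same order but lies in no component — contradicting the observation. Your generic case never reaches a contradiction: you compare $C_G(x_1)$ for an involution $x_1\in L_1$ with $C_G(a)$ for an involution $a\in L_i$, but both quantities have exactly the same shape ($C_{L}(\cdot)$ times the product of the remaining components), and once the components are conjugate they are literally equal, so ``incompatible centralizer orders'' do not arise. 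A centralizer comparison could only work against a \emph{product} element such as $x_1x_2$, which you never introduce in this case — and even then one must control centralizing elements outside $E(G)C_G(E(G))$ and the central-product identifications, which is precisely what the paper's observation avoids.

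Your exceptional case has two further problems. First, in a central product distinct components may share their unique involution ($z_1=z_2$ is possible), in which case $z_1z_2$ is trivial and your convolution count collapses; you tacitly assume $L_1\cap L_2=1$. Second, the step you yourself flag — extending the square-root count from $L_1L_2$ to all of $G$ — is not repaired by your proposed localization: the preimage of a Sylow $2$-subgroup of $F(G/O_{2'}(G))$ is not a member of $\ug$, $Z(E(G))$ does not contain the square roots you need to count, and $E(G)\in\ug$ (Lemma~\ref{lemma:egsimpletr}) is only available \emph{after} quasisimplicity has been established, so it cannot be invoked here. Without a subgroup in $\ug$ that provably contains all square roots of $z_1$ and of $z_1z_2$, Remark~\ref{rem:centralizer}(ii) gives an equality of counts over all of $G$ that you cannot evaluate. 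So the proposal, as it stands, does not close either case; the missing ingredients are the subnormal-transfer observation and the product-element trick.
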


\begin{proof}
We begin with the following statement on quasisimple subnormal subgroups of $G$:
\medskip

\emph{Observation}: Let $q,y \in G$ be elements of the same order and suppose that there exists a quasisimple subnormal subgroup $Q \trianglelefteq \trianglelefteq G$ with $q \in Q \setminus Z(Q)$. We claim that there exists a subgroup $Y \trianglelefteq \trianglelefteq G$ isomorphic to $Q$ such that $y \in Y \setminus Z(Y)$ holds. Since $Q$ is quasisimple, we find $x \in Q$ with $\langle x, q \rangle = Q$. Now by 2-tuple regularity, there exists $x' \in G$ such that the assignment $x \mapsto x'$, $q \mapsto y$ defines an isomorphism between $Q$ and $Y \coloneqq \langle x', y \rangle$. Let $\Psi \colon G \to G$ be an extending bijection. For every $g \in G$, the group $Q$ is subnormal in $\langle Q, gQg^{-1} \rangle$ (see \cite[Theorem 1.2.8]{KUR98}). Since the assignment $x \mapsto x', q \mapsto y, g \mapsto \Psi(g)$ defines an isomorphism between $\langle Q, g\rangle$ and $\langle Y, \Psi(g) \rangle$, the group $Y$ is subnormal in $\langle Y, \Psi(g)Y\Psi(g)^{-1}\rangle$. Then \cite[Theorem 6.7.4]{KUR98} yields $Y \trianglelefteq \trianglelefteq G$.
\medskip

Now suppose that $M$ and $N$ are distinct components of $G$. Suppose that there exist elements $m_1, m_2 \in M$ that are conjugate in $M$ such that $\langle m_1, m_2 \rangle \cong C_p^2$ holds for some $p \in \BP$ dividing $|N|$. In particular, we have $m_1, m_2 \notin Z(E(G))$. Let $n \in N$ be an element of order $p$. The above observation yields $n \notin Z(E(G))$ and hence $\langle m_1, n \rangle \cong C_p^2$ follows. Then $\langle m_1, n\rangle \cong \langle m_1, m_2\rangle \cong C_p^2$ implies that $m_1$ and $n$ are conjugate in $G$ (see Lemma~\ref{lemma:commutators2tupleregular}). Due to $m_1, n \notin Z(E(G))$, it follows that $M$ and $N$ are conjugate. 
\medskip

Now let $K_1, \ldots, K_s$ denote the distinct components of $G$ and suppose that $s > 1$ holds. Assume that for some $i \in \{1, \ldots, s\}$, the component $K_i$ contains a pair of conjugate commuting involutions. By the above observation, this implies that all components of $G$ are conjugate. 
For $r \in \{1, \ldots, s\}$, we choose an involution $k_r \in K_r $. Then $k \coloneqq k_1 \cdots k_s$ is an involution. Using that $E(G)/Z(E(G))$ is a direct product of simple groups, it is easily seen that $k$ is not contained in any component of $G$, which contradicts the observation made at the beginning of this proof. 
%
\medskip

By Lemma~\ref{lemma:simple}, every component of $G$ is thus isomorphic to $2.A_7$, to $6.A_7$, or of the form $\SL(2,q)$ for some prime power $q \geq 5$. If there exists a component $K$ isomorphic to $2.A_7$ or $6. A_7$, then $K$ contains a pair of conjugate commuting elements of order 3. Since all components have order divisible by 3, the above argument can be used to show that they are conjugate, which again leads to a contradiction.
Hence every component is of the form $\SL(2,q)$ for some prime power $q$.
These groups all contain noncentral elements of order~3. Choosing such an element $k_r \in K_r \setminus Z(K_r)$ for every $r \in \{1, \ldots, s\}$, we can argue as before that $k \coloneqq k_1 \cdots k_s$ is an element of order 3 which is not contained in any component of $G$. This is a contradiction. Hence $s = 1$ follows, so $E(G)$ is quasisimple.
\end{proof}

\begin{lemma}\label{lemma:egsimpletr}
We have $E(G) \in \ug$. In particular, $E(G)$ is 2-tuple regular. 
\end{lemma}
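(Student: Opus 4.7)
The plan is to prove $E(G) \in \ug$; the 2-tuple regularity of $E(G)$ then follows at once from Lemma~\ref{lemma:orderinn}. Concretely, given $x \in E(G)$ and $y \in G$ with $\ord(x) = \ord(y)$, I have to show $y \in E(G)$, and the argument splits naturally according to whether or not $x$ lies in $Z(E(G))$.

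For $x \in E(G) \setminus Z(E(G))$ I reuse the observation set up at the beginning of the proof of the previous lemma, applied with $Q := E(G)$: it produces a quasisimple subnormal subgroup $Y \leq G$ with $Y \cong E(G)$ and $y \in Y \setminus Z(Y)$. Because $E(G)$ itself is quasisimple (by the previous lemma), it is already the unique component of $G$, so every quasisimple subnormal subgroup of $G$ must coincide with it. Hence $Y = E(G)$ and $y \in E(G)$.

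The main obstacle is the case $x \in Z(E(G))$, where the above observation is inert since it requires the base element to be non-central. Here I exploit that every finite quasisimple group is 2-generated: I pick $a, b \in E(G)$ with $\langle a, b \rangle = E(G)$, and since $E(G)$ is non-abelian, $a$ and $b$ do not commute, so $a, b \in E(G) \setminus Z(E(G))$; in particular the first case already applies to the orders $\ord(a)$ and $\ord(b)$. The idea is to bootstrap the assignment $x \mapsto y$ into a 3-tuple isomorphism on $(x, a, b)$, whose domain is all of $E(G)$. Since $x$ commutes with $a$, 1-tuple regularity (obtained from 2-tuple regularity by using the tuple $(x, x) \mapsto (y, y)$) yields an extending bijection $\Phi$, and $a' := \Phi(a)$ satisfies $\langle x, a \rangle \cong \langle y, a' \rangle$ via $x \mapsto y$, $a \mapsto a'$; by the first case $a' \in E(G)$. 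Applying 2-tuple regularity to this isomorphism of 2-generated subgroups yields an extending bijection $\Psi$, and $b' := \Psi(b)$ satisfies $\langle x, a, b \rangle \cong \langle y, a', b' \rangle$; again by the first case $b' \in E(G)$. Since $x \in \langle a, b \rangle$, we have $\langle x, a, b \rangle = E(G)$, so the target $\langle y, a', b' \rangle$ has order $|E(G)|$. Restricting the isomorphism to $\langle a, b \rangle$ shows that $\langle a', b' \rangle$ is a subgroup of $E(G)$ of the same order, hence $\langle a', b' \rangle = E(G)$; comparing cardinalities forces $\langle y, a', b' \rangle = \langle a', b' \rangle = E(G)$, and in particular $y \in E(G)$.
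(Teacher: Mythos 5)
Your proof is correct and follows essentially the same route as the paper's: a case split on whether $x$ is central in $E(G)$, with the noncentral case handled by producing an isomorphic subnormal copy that must be a component (the paper argues normality of $\langle y,y'\rangle$ directly rather than citing the earlier observation, but the idea is identical), and the central case handled by the same bootstrap through a generating pair of noncentral elements followed by the order/cardinality comparison.
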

\begin{proof}
First let $x \in E(G) \setminus Z(E(G))$, and consider $y \in G$ with $\ord(y) = \ord(x)$. Since $E(G)$ is quasisimple, there exists $x' \in E(G)$ with $\langle x, x' \rangle = E(G)$. By 1-tuple regularity, there exists $y' \in G$ such that the assignment $x \mapsto y$, $x' \mapsto y'$ defines an isomorphism between $E(G)$ and $E' \coloneqq \langle y, y' \rangle$. Let $\Psi \colon G \to G$ be an extending bijection. For every $g \in G$, we have $g E(G) g^{-1} \subseteq E(G)$, which implies $\Psi(g) E' \Psi(g)^{-1} \subseteq E'$. Hence $E' \trianglelefteq G$, so $E' \subseteq E(G)$ follows. In particular, we have $y \in E(G)$. Now let $x \in Z(E(G))$, and consider $y \in G$ with $\ord(y) = \ord(x)$. 
Since $E(G)$ is quasisimple, we find $x_1, x_2 \in E(G) \setminus Z(E(G))$ with $E(G) = \langle x_1, x_2 \rangle$. By 2-tuple regularity, there exist $y_1, y_2 \in G$ such that the assignment $x \mapsto y$, $x_1 \mapsto y_1$, $x_2 \mapsto y_2$ defines an isomorphism between $E(G) = \langle x, x_1, x_2 \rangle$ and $\langle y, y_1, y_2 \rangle$. In particular, we have $\ord(x_1) = \ord(y_1)$ and $\ord(x_2) = \ord(y_2)$. By the first part of this proof, this yields $y_1, y_2 \in E(G)$, and hence $\langle y, y_1, y_2 \rangle = E(G)$ follows. This implies $y \in E(G)$. 
\end{proof}

Now we determine the possibilities for the isomorphism type of $G$. As a preparation, we show that certain 1-ultrahomogeneous groups are not 2-tuple regular: 

\begin{Remark}\label{rem:extupleregular}
	Consider the group $G \coloneqq \PSL(2,8) \cong \langle (3,8,6,4,9,7,5), (1,2,3)(4,7,5)(6,9,8)\rangle \leq S_9$ as well as the elements $g \coloneqq (2,3)(4,6)(5,9)(7,8)$, $h_1 \coloneqq (2,4)(3,6)(5,7)(8,9)$ and $h_2 \coloneqq (2,6)(3,4)(5,8)(7,9)$ of $G$. We have $\langle g, h_1 \rangle \cong \langle g, h_2 \rangle \cong C_2 \times C_2$. However, it easily verified in GAP that there is no extending bijection for the assignment $g \mapsto g,\ h_1 \mapsto h_2$. Hence $G$ is not 2-tuple regular. Similarly, one checks that the groups $\PSL(2,9)$, $\SL(2,9)$ and $\PSL(3,4)$ are not 2-tuple regular. 
\end{Remark}

With this, we obtain the following result for the structure of $E(G)$: 

\begin{lemma}\label{lemma:posseg}
The group $E(G)$ is isomorphic to one of $\PSL(2,5)$, $\PSL(2,7)$, and $\SL(2,5)$.
\end{lemma}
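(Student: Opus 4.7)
Since $E(G)$ is itself 2-tuple regular (Lemma~\ref{lemma:egsimpletr}), quasisimple, and (being quasisimple) 2-generated, Lemma~\ref{lemma:kgenerated} forces $E(G)$ to be 1-ultrahomogeneous: every two elements of $E(G)$ of the same order lie in a common $\Aut(E(G))$-orbit. Combined with Remark~\ref{rem:centralizer} (matching conjugacy class sizes and the absence of a maximal cyclic subgroup whose order properly divides that of another) and with Remark~\ref{rem:extupleregular}, this is a strong constraint that I would push through the classification of the finite simple groups applied to $S \coloneqq E(G)/Z(E(G))$ in order to pin down the isomorphism type of $E(G)$.

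The CFSG walkthrough goes as follows. Alternating groups $A_n$ with $n \geq 8$ fail because the involutions of cycle types $2^2 \cdot 1^{n-4}$ and $2^4 \cdot 1^{n-8}$ lie in distinct $\Aut(A_n) = S_n$-orbits; $A_7$ likewise fails via 3-cycles versus products of two disjoint 3-cycles, and $A_6 \cong \PSL(2,9)$ is excluded by Remark~\ref{rem:extupleregular}. For $S = \PSL(2,q)$, Remark~\ref{rem:extupleregular} disposes of $q \in \{8,9\}$, and for $q \geq 11$ the split and non-split tori produce elements of the same order lying in distinct $\Aut(\PSL(2,q))$-orbits, violating 1-ultrahomogeneity; this leaves $S \in \{\PSL(2,5), \PSL(2,7)\}$. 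Higher-rank classical groups (with $\PSL(3,4)$ covered by Remark~\ref{rem:extupleregular}), exceptional groups of Lie type, and sporadic groups are dispatched analogously by exhibiting two same-order elements in distinct $\Aut(S)$-orbits, using the richness of their outer automorphism groups and their conjugacy class structure.

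It remains to identify the admissible quasisimple cover. Both $\PSL(2,5) \cong A_5$ and $\PSL(2,7)$ have Schur multiplier $C_2$, so the candidates are $A_5$, $\SL(2,5)$, $\PSL(2,7)$, and $\SL(2,7)$. The groups $A_5$, $\SL(2,5)$, and $\PSL(2,7)$ are ultrahomogeneous by \cite{CHE91} and hence 2-tuple regular. To rule out $\SL(2,7)$, I would use the fact that $\{\pm I\}$ is the unique subgroup of order~$2$ in $\SL(2,7)$ to exhibit two abstractly isomorphic 2-generated subgroups that cannot be matched by any order-preserving extending bijection, in the spirit of the computation sketched in Remark~\ref{rem:extupleregular}.

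\textbf{Main obstacle.} The technically hardest step is the uniform treatment of $\PSL(2,q)$ for unbounded $q$: one must exhibit, for each $q \geq 11$, a specific element order at which 1-ultrahomogeneity visibly fails in $\Aut(\PSL(2,q)) = \operatorname{PGL}(2,q) \rtimes \langle \text{Frobenius}\rangle$, which is a structural fact about the interaction of the two tori with the outer diagonal and field automorphisms. A secondary subtlety is the clean structural separation of $\PSL(2,7)$ from $\SL(2,7)$ without computer assistance, for which one really wants a conceptual reason that the unique central involution of $\SL(2,7)$ spoils 2-tuple regularity while the trivial center of $\PSL(2,7)$ does not.
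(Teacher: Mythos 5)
Your opening reduction is exactly the paper's: $E(G)$ is 2-tuple regular by Lemma~\ref{lemma:egsimpletr}, quasisimple hence 2-generated, so Lemma~\ref{lemma:kgenerated} makes it 1-ultrahomogeneous. But from there the paper does not redo a CFSG analysis: it quotes the existing classification \cite[Corollary~2.4]{LI98} of the quasisimple groups with this fusion property (noting that $\SL(2,7)$ appears in that list erroneously, since it has elements of order $8$ that are not $\Aut$-conjugate, so it is not even 1-ultrahomogeneous), which leaves $\PSL(2,5)$, $\PSL(2,7)$, $\PSL(2,8)$, $\PSL(2,9)$, $\PSL(3,4)$, $\SL(2,5)$, $\SL(2,9)$; the last four are then eliminated by the explicit non-2-tuple-regularity checks of Remark~\ref{rem:extupleregular}. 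Your plan to rederive this by running through the simple groups $S \coloneqq E(G)/Z(E(G))$ has genuine gaps. First, the descent to $S$ is unjustified: 1-ultrahomogeneity and tuple regularity do not pass to quotients in general, and Lemma~\ref{lemma:preimageorder} only applies to quotients by subgroups in $\mathcal{U}(E(G))$ -- the center of a general cover (e.g.\ $2.A_n$, $n \geq 8$, which has non-central involutions) is not a union of order classes. Consequently, excluding $S$ by exhibiting two same-order elements of $S$ in distinct $\Aut(S)$-orbits does not by itself exclude $E(G)$; relatedly, your Schur-multiplier step silently assumes that ruling out $A_6$ or $\PSL(3,4)$ as quotients also disposes of $3.A_6$, $6.A_6$ and the covers of $\PSL(3,4)$, which Remark~\ref{rem:extupleregular} does not do (it treats only $\PSL(2,8)$, $\PSL(2,9)$, $\SL(2,9)$, $\PSL(3,4)$ themselves). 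The cited classification is stated for the quasisimple group directly, which is precisely why the paper's route avoids all of this.

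Second, your concrete mechanism for $\PSL(2,q)$, $q \geq 11$, is incorrect: the split and non-split tori have orders $(q-1)/\gcd(2,q-1)$ and $(q+1)/\gcd(2,q-1)$, which are coprime, so apart from the identity (and involutions, which are all $\Aut$-conjugate) there simply is no pair of same-order elements with one member in each type of torus. The actual obstruction is that a single large cyclic torus has $\varphi(n)$ generators while $\Aut(\PSL(2,q))$ can only fuse them through inversion, the diagonal automorphism and the field automorphisms, so distinct power classes inside one torus survive (e.g.\ the two $\Aut$-classes of order-$5$ elements in $\PSL(2,11)$). Finally, ``dispatched analogously'' for the higher-rank classical, exceptional and sporadic groups is not an argument -- it is exactly the content of the classification you would be re-proving -- and your proposed exclusion of $\SL(2,7)$ via a bespoke 2-tuple-regularity violation is harder than needed, since the order-$8$ fusion failure already kills 1-ultrahomogeneity. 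As it stands, the proposal would need all of these steps filled in (or replaced by the citation) before it establishes the lemma.
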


\begin{proof}
By Lemma~\ref{lemma:egsimpletr}, the group $E(G)$ is 2-tuple regular. Since $E(G)$ is quasisimple, it is 2-generated and hence 1-ultrahomogeneous by Lemma~\ref{lemma:kgenerated}. By \cite[Corollary 2.4]{LI98}, $E(G)$ is isomorphic to one of $\PSL(2,5)$, $\PSL(2,7)$, $\PSL(2,8)$, $\PSL(2,9)$, $\PSL(3,4)$, $\SL(2,5)$ or $\SL(2,9)$ (the group $\SL(2,7)$ is mistakenly contained in their list as it contains elements of order~8 that are non-conjugate in its automorphism group, so it is not ultrahomogeneous). By Remark~\ref{rem:extupleregular}, the group $E(G)$ is isomorphic to $\PSL(2,5)$, $\PSL(2,7))$ or $\SL(2,5)$.
\end{proof}

We conclude with the following decomposition, whose proof is similar to that of \cite[Proposition 1]{CHE00}.

\begin{theorem}\label{theo:nonsolvable}
We have $G = H_0 \times E(G)$ for a 2-tuple regular solvable subgroup $H_0$ of $G$ whose order is coprime to $|E(G)|$.
\end{theorem}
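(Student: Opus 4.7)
The plan is to set $C := C_G(E(G))$, prove that $G$ equals the central product $E(G) \cdot C$, and then peel off a Hall complement of $Z(E(G))$ in $C$ to obtain the desired direct decomposition $G = H_0 \times E(G)$. As a first step, Lemma~\ref{lemma:egsimpletr} gives $E(G) \in \ug$, so Lemma~\ref{lemma:centralizerunionorderclasses} yields $C \in \ug$ and Lemma~\ref{lemma:orderinn} shows that $C$ is 2-tuple regular; moreover the layer $E(C)$ is trivial since any component of $C$ would be a quasisimple subnormal subgroup of $G$ commuting with $E(G)$, hence distinct from the unique component $E(G)$ of $G$, which is impossible.

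The crucial step is showing $G = E(G) \cdot C$. I would pass to $\bar G := G/C$, which embeds into $\Aut(E(G))$ and contains $E(G)C/C \cong \Inn(E(G))$. By Lemma~\ref{lemma:posseg}, $E(G) \in \{\PSL(2,5), \PSL(2,7), \SL(2,5)\}$ and $|\Out(E(G))| = 2$ in each case, so $\bar G$ is either $\Inn(E(G))$ or the full group $\Aut(E(G))$. The ``outer'' option is excluded by Lemma~\ref{lemma:preimageorder}(ii): in each possible $\Aut(E(G))$, namely $S_5$ (for $\PSL(2,5)$ and $\SL(2,5)$) and $\operatorname{PGL}(2,7)$ (for $\PSL(2,7)$), the involutions split into two conjugacy classes of distinct sizes (sizes $10$ and $15$ in $S_5$; sizes $21$ and $28$ in $\operatorname{PGL}(2,7)$), but $C \in \ug$ forces same-order elements of $\bar G$ to have conjugacy classes of equal size, a contradiction. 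Thus $\bar G \cong \Inn(E(G))$ and $G = E(G) \cdot C$.

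To extract $H_0$, I would exploit $E(G), C \in \ug$: for every prime power $p^k$ realized as an element order in $E(G) \setminus Z(E(G))$, every element of $G$ of order $p^k$ lies in $E(G)$, so no such element lies in $C$. For $E(G) \in \{\PSL(2,5), \PSL(2,7)\}$, where $Z(E(G)) = 1$, this directly yields $\gcd(|C|, |E(G)|) = 1$ and the decomposition $G = E(G) \times C$ with $H_0 := C$. For $E(G) \cong \SL(2,5)$, the same reasoning excludes elements of $C$ of orders $3, 4, 5, 6, 10$, so $C$ has a unique involution (the central $-I$) and no element of order $4$; Theorem~\ref{theo:classificationpgroups} then forces the Sylow $2$-subgroup of $C$ to equal $\langle -I \rangle$. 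Since $\langle -I \rangle$ is central in $C$, Schur--Zassenhaus produces a normal Hall complement $H_0 \trianglelefteq C$ with $\gcd(|H_0|, |E(G)|) = 1$, and $G = E(G) \times H_0$ follows. In every case $H_0$ is characterised by order classes in $C$, so $H_0 \in \mathcal{U}(C)$ and hence $H_0$ is 2-tuple regular by Lemma~\ref{lemma:orderinn}; also $E(H_0) = 1$, since a component of $H_0$ would be a component of $G$, forcing it to equal $E(G)$, which is incompatible with coprimality. Theorem~\ref{theo:triviallayer} then yields solvability of $H_0$.

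The main obstacle is Step~2, i.e.~ruling out $\bar G = \Aut(E(G))$. This ultimately reduces to the case-by-case combinatorial observation that involutions in each of $S_5$ and $\operatorname{PGL}(2,7)$ are not fused into a single class. Once this is secured, the remaining steps are a clean interplay between the $\ug$-structure of $E(G)$ and $C$ and the Schur--Zassenhaus splitting.
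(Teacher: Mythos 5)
Your proposal is correct, but your route to the key step $G = E(G)\,C_G(E(G))$ differs from the paper's. The paper works directly inside $G$: if $HE(G) \subsetneq G$ (with $H = C_G(E(G))$), it takes a $2$-element $g$ inducing an outer automorphism of $E(G)$ and uses that $E(G) \in \ug$ contains \emph{all} involutions (and, when $E(G) \cong \SL(2,5)$, all elements of order $4$) to force $g^{\ord(g)/2}$, and then $g^{\ord(g)/4}$, into $H \cap E(G)$, giving a contradiction without ever identifying $\Aut(E(G))$. You instead pass to $\bar G = G/C$ with $C = C_G(E(G)) \in \ug$ and apply Lemma~\ref{lemma:preimageorder}(ii), which forces same-order elements of $\bar G$ to lie in equal-size classes, against the fact that involutions in $S_5$ and in $\operatorname{PGL}(2,7)$ split into classes of sizes $10,15$ and $21,28$ respectively; this correctly excludes $\bar G = \Aut(E(G))$ and is a legitimately different (and structurally clean) use of the quotient lemma, at the price of invoking explicit class data for $\Aut(E(G))$. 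Your extraction of $H_0$ then parallels the paper's: the paper observes $Z(E(G)) \in \Syl_2(H)$ and writes $H = O_{2'}(H) \times Z(E(G))$, while you get the same splitting via Schur--Zassenhaus over the central subgroup $\langle -I\rangle$; both yield coprimality from the order-class property of $E(G)$, and solvability of $H_0$ from Theorem~\ref{theo:triviallayer} exactly as in the paper. One small quibble: you cite Theorem~\ref{theo:classificationpgroups} to pin down the Sylow $2$-subgroup of $C$, but that theorem applies to $2$-tuple regular $p$-groups and a Sylow subgroup of $C$ is not known to inherit $2$-tuple regularity; fortunately no such appeal is needed, since a $2$-group with a unique involution and no element of order $4$ is already $\langle -I\rangle$.
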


\begin{proof}
Set $E \coloneqq E(G)$, and let $H \coloneqq C_G(E)$. By Lemma~\ref{lemma:posseg}, the group $E$ is isomorphic to one of $\SL(2,5)$, $\PSL(2,5)$, and $\PSL(2,7)$. By Lemma~\ref{lemma:egsimpletr}, we have $E \in \ug$. In particular, it contains all involutions in $G$.
\medskip

For a contradiction, suppose that $HE$ is a proper subgroup of $G$. Then there exists an element $g \in G$ inducing an outer automorphism of $E$. Since we have $|\Out(E)| = 2$ in all cases, we may assume that $g$ is a 2-element. We have $g^2 \in H$ and $\ord(g) \geq 4$ due to $g \notin E$. Then the involution $g^{\ord(g)/2}$ is contained in $H \cap E$. In particular, this forces $E \cong \SL(2,5)$. As $E$ then contains all elements of order $4$ in $G$, we find $\ord(g) \geq 8$. The element $h \coloneqq g^{\ord(g)/4} \in H$ has order 4 and hence $h \in E$ follows, which is a contradiction. Hence $G = H E$ follows. 
\medskip

If $H \cap E = 1$ holds, we set $H_0 \coloneqq H$. Otherwise, we have $E \cong \SL(2,5)$ and $H \cap E = Z(E)$ has order~2. Then $Z(E) \in \Syl_2(H)$ follows as all elements of order 4 of $G$ are contained in $E \setminus Z(E)$. This yields $H = O_{2'}(H) \times Z(E)$ and hence $G = H_0 \times E$ follows for $H_0 \coloneqq O_{2'}(H)$. In any case, $|H_0|$ and~$|E|$ are coprime. By Corollary~\ref{cor:directproduct}, $H_0$ is 2-tuple regular. Due to $E(H_0) = 1$, the group~$H_0$ is solvable by Theorem~\ref{theo:triviallayer}.
\end{proof}

\subsection{\texorpdfstring{Proof of Theorems~\ref{theo:a} and~\ref{theo:b}}{Proof Theorems A and B}}\label{sec:mainproofs}

We conclude this paper with the proof of our main theorems. 

\begin{proof}[Proof of Theorems~\ref{theo:a} and~\ref{theo:b}]
First assume that $G$ is a 2-tuple regular finite group. If $G$ is solvable, then $G$ is one of the groups described in Theorem~\ref{theo:solvable}. If $G$ is non-solvable, we have $G = H_0 \times E(G)$ by Theorem~\ref{theo:nonsolvable}, where $\gcd(|H_0|, |E(G)|) = 1$ and $H_0$ is a 2-tuple regular solvable group. The possibilities for $E(G)$ are given by Lemma~\ref{lemma:posseg}. Since $|E(G)|$ is even, the group $H_0$ is of odd order and hence abelian with homocyclic Sylow subgroups (see Theorem~\ref{theo:solvable}). 
\medskip 

Conversely, using the classification of the ultrahomogeneous finite groups stated in $\cite{CHE00}$, we see that all groups described in Theorem~\ref{theo:a} are ultrahomogeneous. In particular, they are indeed 2-tuple regular. This completes the proof of Theorem~\ref{theo:a}. Moreover, this shows that every 2-tuple regular finite group is ultrahomogeneous and hence $k$-ultrahomogeneous as well as $\ell$-tuple regular for every $k, \ell \in \N$. This proves Theorem~\ref{theo:b}. 
\end{proof}

\subsection*{Acknowledgments}
The research leading to these results has received funding from the European Research Council (ERC) under the European Unions Horizon 2020 research and innovation programme (EngageS: grant agreement No.~820148).
I thank J.~Brachter for helpful discussions as well as several GAP computations, and T.~Fritzsche and P.~Schweitzer for their comments on a preliminary version of this paper. Moreover, I am very grateful for the detailed comments and suggestions of the anonymous referee.

\bibliographystyle{plain}
\bibliography{regularity.bib}

\begin{thebibliography}{10}

\bibitem{ASC76}
M.~Aschbacher and G.~Seitz.
\newblock Involutions in {C}hevalley groups over fields of even order.
\newblock {\em Nagoya Math. J.}, 63:1--91, 1976.

\bibitem{BRA20}
J.~Brachter and P.~Schweitzer.
\newblock On the {W}eisfeiler-{L}eman {D}imension of {F}inite {G}roups.
\newblock In H.~Hermanns, L.~Zhang, N.~Kobayashi, and D.~Miller, editors, {\em
  Proceedings of the 35th Annual ACM/IEEE Symposium on Logic in Computer
  Science}, LICS '20, page 287–300. ACM, New York, 2020.

\bibitem{CAM80}
P.~Cameron.
\newblock 6-transitive graphs.
\newblock {\em J. Comb. Theory, Series B}, 28(2):168--179, 1980.

\bibitem{CHE91}
G.~Cherlin and U.~Felgner.
\newblock {Homogeneous Solvable Groups}.
\newblock {\em J. London Math. Soc.}, s2-44(1):102--120, 1991.

\bibitem{CHE00}
G.~Cherlin and U.~Felgner.
\newblock Homogeneous finite groups.
\newblock {\em J. London Math. Soc.}, 62(3):784--794, 2000.

\bibitem{GAP4}
The GAP~Group.
\newblock {\em {GAP -- Groups, Algorithms, and Programming, Version 4.11.1}},
  2021.

\bibitem{GOR68}
D.~Gorenstein.
\newblock {\em Finite Groups}.
\newblock Harper \& Row, New York, 1968.

\bibitem{GRI78}
R.~Griess.
\newblock Finite groups whose involutions lie in the center.
\newblock {\em Quart.~J.~Math.~Oxford}, 29(2):241--247, 1978.

\bibitem{HER74}
C.~Hering.
\newblock Transitive linear groups and linear groups which contain irreducible
  subgroups of prime order.
\newblock {\em Geometriae Dedicata}, 2:425--460, 1974.

\bibitem{HUP67}
B.~Huppert.
\newblock {\em Endliche {G}ruppen {I}}, volume 134 of {\em Grundlehren der
  mathematischen Wissenschaften}.
\newblock Springer, Berlin, 1967.

\bibitem{KUR98}
H.~Kurzweil and B.~Stellmacher.
\newblock {\em Theorie der endlichen {G}ruppen}.
\newblock Springer, Berlin, 1998.

\bibitem{Li99}
C.~Li.
\newblock A complete classification of finite homogeneous groups.
\newblock {\em Bull. Austral. Math. Soc.}, 60:331--334, 1999.

\bibitem{LI98}
C.~Li, C.~Praeger, and M.~Xu.
\newblock Isomorphisms of {F}inite {C}ayley {D}igraphs of {B}ounded {V}alency.
\newblock {\em J. Comb. Theory, Series B}, 73:164--183, 1998.

\bibitem{MAN71}
A.~Mann.
\newblock Generators of 2-groups.
\newblock {\em Israel J. Math.}, 10:158--159, 1971.

\bibitem{MIH04}
P.~Mih\u{a}ilescu.
\newblock Primary cyclotomic units and a proof of {C}atalans conjecture.
\newblock {\em J. reine angew. Mathematik}, 572:167--195, 2004.

\bibitem{ZHA92}
J.~Zhang.
\newblock On finite groups all of whose elements of the same order are
  conjugate in their automorphism groups.
\newblock {\em J. Algebra}, 153(1):22--36, 1992.

\end{thebibliography}

\end{document}